\newcommand{\ra}{\rightarrow}
\newcommand{\pr}{\prime}
\newcommand{\de}{\partial}
\newcommand{\te}{\theta}
\newcommand{\R}{\mathbb{R}}
\newcommand{\Z}{\mathbb{Z}}
\newcommand{\abs}[1]{\left\lvert #1 \right\rvert}
\newcommand{\tld}[1]{\widetilde{#1}}
\newcommand{\lbar}[1]{\overline{#1}}
\newcommand{\id}{\mathrm{id}}
\newcommand{\indep}{\perp \!\!\! \perp}
\renewcommand{\coprod}{\rotatebox[origin = c]{180}{$\prod$}}
\newtheorem{thm}{Theorem}
\newenvironment{customthm}[1]
  {\innercustomthm}
  {\endinnercustomthm}
\newtheorem{cor}{Corollary}[section]
\newtheorem{lemma}[cor]{Lemma}
\newtheorem{scholium}[cor]{Scholium}
\newenvironment{customlm}[1]
  {\innercustomlm}
  {\endinnercustomlm}
\newtheorem{prop}[cor]{Proposition}
\newtheorem{conj}{Conjecture}
\theoremstyle{definition}
\newtheorem{defin}{Definition}[section]
\newtheorem*{defin*}{Definition}
\theoremstyle{remark}
\newtheorem*{note}{Note}
\newtheorem{remark}{Remark}[section]
\numberwithin{equation}{section}
\numberwithin{figure}{section}
\begin{document}

\title{The Geometry of the Bing Involution}

\author{Michael Freedman}
\address{\hskip-\parindent
	Michael Freedman \\
	Microsoft Research, Station Q, and Department of Mathematics \\
	University of California, Santa Barbara \\
	Santa Barbara, CA 93106
}

\author{Michael Starbird}
\address{\hskip-\parindent
	Michael Starbird \\
	Department of Mathematics \\
	The University of Texas at Austin \\
	Austin, TX 78712
}

\begin{abstract}
	In 1952 Bing published a wild (not topologically conjugate to smooth) involution $I$ of the 3-sphere $S^3$. But exactly how wild is it, analytically? We prove that any involution $I^h$, topologically conjugate to $I$, must have a nearly exponential modulus of continuity. Specifically, given any $\alpha>0$, there exists a sequence of $\delta$'s converging to zero, $\delta > 0$, and points $x,y \in S^3$ with dist$(x,y) < \delta$, yet dist$(I^h(x), I^h(y)) > \epsilon$, where $\delta^{-1} = e^{\left(\frac{\epsilon^{-1}}{\log^{(1+\alpha)}(\epsilon^{-1})}\right)}$, and dist is the usual Riemannian distance on $S^3$. In particular, $I^h$ stretches distance much more than a Lipschitz function ($\delta^{-1} = c\epsilon^{-1}$) or a H\"{o}lder function ($\delta^{-1} = c^\pr(\epsilon^{-1})^{p}$, $1 < p < \infty$). Bing's original construction and known alternatives (see text) for $I$ have a modulus of continuity $\delta^{-1} > c \sqrt{2}^{\epsilon^{-1}}$, so the theorem is reasonably tight---we prove the modulus must be at least exponential up to a polylog, whereas the truth may be fully exponential. Actually, the functional for $\delta^{-1}$ coming out of the proof can be chosen slightly closer to exponential than stated here (see Theorem \ref{paper-thm}). Using the same technique we analyze a large class of ``ramified'' Bing involutions and show, as a scholium, that given any function $f: \R^+ \ra \R^+$, no matter how rapid its growth, we can find a corresponding involution $J$ of the 3-sphere such that any topological conjugate $J^h$ of $J$ must have a modulus of continuity $\delta^{-1}(\epsilon^{-1})$  growing faster than $f$ (near infinity). There is a literature on inherent differentiability (references in text) but as far as the authors know the subject of inherent modulus of continuity is new.

	\vspace{1em}
	\noindent
	Dedicated to R.H.\ Bing's life work on the 70th anniversary of his involution.
\end{abstract}

\maketitle

\section{Introduction}
Great examples breathe life into topology. Some, like Milnor's exotic 7-spheres, you \emph{learn}; others, like Bing's wild involution on the 3-sphere $S^3$, you \emph{see}. But what, exactly, are we looking at? As we will explain, Bing's involution $I$ is not really one specific example, but a topological conjugacy class $\{I^h = h^{-1} \circ I \circ h,\ h: S^3 \ra S^3 \text{ a homeomorphism}\}$. That is, there is no one preferred coordinate system in which to study its properties. This paper estimates a geometric feature: inherent modulus of continuity (imoc), which is conserved under change of coordinates. It describes the metrical distortion at finer and finer scales which must be a shared feature over the entire conjugacy class $\{I^h\}$.

Modern readers may not be familiar with the topological category, so some context is in order. An involution is a homeomorphism of order two: $I^2 = \id$. Bing's involution $I$ is wild in the sense that, regardless of coordinates, it cannot be made smooth, $C^1$, that is, $I^h$ can never be a diffeomorphism. Such wild examples are often the result of taking the limit of an infinite process. Newton taught us that very often, infinite processes have smooth limits---these are the subject of analysis. But often they do not; in such cases, wild topology may result. To make an analogy to condensed matter physics, lattice models often have field theories as their low energy continuum limit, but many times \cite{haah13} they do not. In both cases the subjects must proceed along two different tracks depending on the regularity of the limit. Wild topology is closely related to decomposition space theory \cite{daverman86}, the study of quotient maps in the favorable case where the quotient space, as well as the source space, is metrizable (see Section 1.1), and this is the route to the Bing involution.

Bing brought decomposition space theory to the masses with his landmark paper \cite{bing52} proving that the double of the (closed) Alexander Horned sphere complement is homeomorphic to $S^3$. The doubled structure immediately yields Bing's involution $I$. His ideas eventually contributed to the resolution of the double suspension conjecture \cites{edwards80,cannon79} and the 4-dimensional Poincar\'{e} conjecture \cites{freedman82,fq90}. But for 70 years the original construction, Bing's wild involution on $S^3$, has retained certain mysteries. On and off for 40 years the authors have wondered if the involution could be made Lipschitz. In recent years \cite{onninen19}, its compatibility with other analytic structures (Sobolev conditions) has also been studied and the Lipschitz question has appeared in a problem list \cite{heinonen97}. In this paper we prove that, in any coordinates, the Bing involution $I^h$ is much less regular than even a H\"{o}lder-continuous function.

A function between compact metric spaces $k: X \ra Y$ is continuous iff $\forall \epsilon > 0$, $\exists \delta > 0$ s.t.\ dist$(x,x^\pr) < \delta \implies \operatorname{dist}(k(x),k(x^\pr)) < \epsilon$. We can define a $\delta$-$\epsilon$ function, $\delta(\epsilon)$, that returns for each $\epsilon > 0$ the supremum of those numbers $\delta$ such that dist$(x,x^\pr) < \delta$ implies dist$(k(x),k(x^\pr)) < \epsilon$. In this paper we will be examining how far nearby points must be stretched under, in our case, involutions of $S^3$. To make ``large stretching'' correspond to a rapidly growing function, we choose to write $\delta^{-1}$ as a function of $\epsilon^{-1}$, $\delta^{-1}(\epsilon^{-1}) \coloneqq \frac{1}{\delta(1/\epsilon^{-1})} = \frac{1}{\delta(\epsilon)}$.

This $\delta^{-1}(\epsilon^{-1})$ function captures the concept of \emph{modulus of continuity of k}, moc$(k)$, and we are primarily interested in describing the behavior of this $\delta^{-1}(\epsilon^{-1})$ moc function as $\epsilon^{-1}$  approaches infinity and $\epsilon$ approaches $0$. So describing how $\delta^{-1}(\epsilon^{-1})$ behaves as $\epsilon^{-1}$ goes to infinity corresponds to investigating how small $\delta$ must be in relation to $\epsilon$ in the definition of continuity as $\epsilon$ approaches $0$. 

As an example, $h$ is Lipschitz iff there exists a constant $c$ such that $\delta^{-1}(\epsilon^{-1}) \leq c\epsilon^{-1}$. And $h$ is H\"{o}lder iff there exists a $p>1$ such that for sufficiently large $\epsilon^{-1}$, $\delta^{-1}(\epsilon^{-1}) \leq (\epsilon^{-1})^p$.

Our main result is:

\begin{thm}\label{paper-thm}
	If $I^h$ is any involution of $S^3$ in the topological conjugacy class of the Bing involution, then for any $j=0,1,2\dots$ and any $\alpha>0$, there is some sequence of $\epsilon$'s converging to zero so that the corresponding $\delta$'s must be chosen so that
	\[
		\delta^{-1}(\epsilon^{-1}) \geq e^{\left(\frac{\epsilon^{-1}}{L_j^\alpha(\epsilon^{-1})}\right)}
	\]
    where $L_0^\alpha \coloneqq \log^{(1+\alpha)}(x)$, $L_1^\alpha(x) \coloneqq \log(x)(\log \log(x))^{(1+\alpha)}$, ..., and

    \noindent
    $L_j^\alpha = \log(x)(\log \log(x)) \cdots (\underbrace{\log \cdots \log}_{(j+1)\text{-times}}(x))^{(1+\alpha)}$. All logs may be assumed base $e$. That is, $I^h$ has a nearly exponential modulus of continuity (moc). We say $I^h$ has a nearly exponential inherent modulus of continuity (imoc).
\end{thm}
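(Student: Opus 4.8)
The plan is to fix a concrete \emph{defining sequence} for the Bing decomposition producing $I$ and to show that any coordinatization $h$ must carry that sequence into the round $S^3$ very inefficiently. Recall the standard picture: $S^3 = W\cup_\Sigma W'$ is the double of the closed complement $W$ of Alexander's horned sphere, $I$ is the sheet-interchange, $\operatorname{Fix}(I)=\Sigma$, and $\Sigma$ is wild exactly along a Cantor set $C\subset\Sigma$. There is a nested family $X_0\supset X_1\supset\cdots$ of compacta with $\bigcap_n X_n = C$, each $X_n$ a disjoint union of handlebodies; every component of $X_n$ meets $\Sigma$ and contains the stage-$(n{+}1)$ components arranged as a Bing double (a clasped, null-homotopic pair together with parallel copies), so the component count roughly doubles, $X_n$ has $\sim 2^n$ components, and $I$ interchanges the two halves (the $W$- and $W'$-parts) of each component compatibly with the clasping combinatorics. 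Fix $I^h = h^{-1}Ih$ and let $\mathcal D_n = h(X_n)\subset S^3$. Since wildness is a conjugacy invariant, $h(C)$ is again wildly embedded, so none of the $\mathcal D_n$ can be isotoped to anything ``simple.'' It suffices to produce, for each $j,\alpha$ and for infinitely many $n$, a point $x_n$ with $\operatorname{dist}(x_n,\operatorname{Fix}(I^h))\le\delta_n$ but $\operatorname{dist}(x_n,I^h(x_n))\ge\epsilon_n$, where $\epsilon_n\to 0$ and $\delta_n^{-1}\ge\exp(\epsilon_n^{-1}/L_j^\alpha(\epsilon_n^{-1}))$; pairing $x_n$ with its nearest fixed point then yields the two nearby points the theorem demands.

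The analytic core is a quantitative ``cost of clasping'' estimate. Attach to a component $P\in\mathcal D_n$ its diameter $d_n(P)$, a thickness $t_n(P)$ (largest radius of a round tube embedded in $P$ about a spine), and a \emph{clasping modulus} recording how robustly the stage-$(n{+}1)$ Bing double inside $P$ is non-split in $P$. Using Dehn's lemma and the loop theorem to produce embedded meridian disks for the sub-handlebodies, together with isoperimetric (area) bounds for such disks in the round metric and an extremal-length comparison, one shows that an essential clasp at stage $n$ costs a definite fraction of the available room: keeping the Bing pattern essential all the way down to stage $n$ forces the cumulative per-stage loss $\sum_{k\le n}\psi_k$ to be large, and here a Bertrand-type threshold enters --- the loss rates $\psi_k$, hence the stage shrinking ratios, may decay only as slowly as the generalized Bertrand terms $1/(k\,L_j^\alpha(k))$ on pain of the sum converging, i.e.\ of the limit going tame. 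Meanwhile the doubling produces a competing upper bound: packing the $\sim 2^n$ disjoint stage-$n$ handlebodies into $S^3$ gives $\min_{P\in\mathcal D_n} d_n(P)\lesssim 2^{-n/3}$. Playing these against each other, and against the fact that $I^h$ interchanges the two halves of every component, one extracts at infinitely many stages $n$ a pair of points realizing the claimed distortion --- roughly, a component that is at once ``deep'' (some point of it is exponentially close to $\operatorname{Fix}(I^h)$, which sets $\delta_n$) and ``unfoldable'' (the combinatorics of the clasping in its $W$-part cannot be matched to that in its $W'$-part without a longitudinal displacement $\ge\epsilon_n$, so $I^h$ moves a point of it that far), with $\delta_n$ and $\epsilon_n$ tied by $\delta_n^{-1}\ge\exp(\epsilon_n^{-1}/L_j^\alpha(\epsilon_n^{-1}))$ once the $\psi_k$ are tuned to the $L_j^\alpha$ threshold.

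The main obstacle is the clasping estimate itself: converting the purely topological non-splittability of a Bing double into a metric inequality among $d_n$, $t_n$, and the clasping modulus that holds \emph{uniformly over all homeomorphisms} $h$. The subtlety is that each clasped sub-handlebody is null-homotopic in its parent, so naive linking numbers vanish; one must instead exploit that the two sub-handlebodies, individually inessential, still cannot be isotoped off one another, turn this into an area lower bound for the spanning disks via the loop theorem and isoperimetry in $S^3$, and then make these losses compound correctly across infinitely many scales, so that the accumulated defect is controlled by one (generalized Bertrand) series rather than being squandered. That compounding is exactly what produces the polylogarithmic slack --- hence ``nearly'' rather than exactly exponential --- and it is also what makes the argument elastic enough for the scholium: replacing the per-stage count $2^n$ by any prescribed rapidly growing sequence $N_n$ of sub-handlebodies (a ``ramified'' Bing involution $J$) and rerunning the packing-versus-clasping bookkeeping yields $\operatorname{moc}(J^h)$ eventually dominating any preassigned $f$.
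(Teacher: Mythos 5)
Your outline reproduces the general shape of the problem (exploit the clasping of Bing pairs at all scales, trade it off against a counting/packing bound, tune a series to get the polylog), but the step that carries all the weight --- the ``cost of clasping'' inequality, uniform over every conjugating homeomorphism $h$ --- is announced rather than proved, and the tools you name (loop theorem, isoperimetry, extremal length, a ``clasping modulus'') do not by themselves survive the pathologies an arbitrary $h$ can create: intersections of the image tori with planes that have tiny area but enormous diameter, trivial ``feeler'' curves, loose or folded clasps. The paper's actual mechanism is quite specific: one finds a planar disk of small area whose \emph{boundary lies in} $\operatorname{FIX}$ and shows its reflected image must reach a distant parallel plane (the bag lemma), and when that fails one uses the long-arc and J-lemma linking argument with the spanning disk of the \emph{sister} torus, iterated down the tree of descendants in the endgame, where the only remaining alternative (``rather precise'' turning points at every generation) contradicts the shrinking of the decomposition. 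Nothing in your proposal plays the role of the bag/J-lemmas; the sentence ``one extracts at infinitely many stages $n$ a pair of points realizing the claimed distortion'' is precisely the argument that has to be supplied.

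Two concrete steps also fail as stated. First, the packing bound $\min_P d_n(P)\lesssim 2^{-n/3}$ is false: $2^n$ disjoint handlebodies in $S^3$ must have small \emph{minimum volume}, but all of their diameters can remain of order $1$, and for a general conjugate they genuinely can; likewise small volume does not place any point of a component exponentially close to $\operatorname{Fix}(I^h)$. What one needs, and what the paper controls, is the cross-sectional \emph{area} of the planar intersections (via the unnestedness of residual disks-with-holes on each plane together with a Fubini argument over nearby planes), combined with the fact that the relevant boundary curve lies in $\operatorname{FIX}$, so that the planar isoperimetric inequality pairs the moved point $y$ with an honestly fixed point $x\in\partial\Delta$. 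Second, your Bertrand-type account mislocates the source of the polylogarithm: the terms $1/\bigl(k\,L_j^\alpha(k)\bigr)$ are not per-stage ``loss rates'' forced on pain of tameness, but the spacings of an auxiliary infinite family of measuring planes that must all fit inside a final-$n$ torus of bounded combinatorial length during the endgame --- convergence of $\sum_k 1/\bigl(k\,L_j^\alpha(k)\bigr)$ is exactly what lets them fit, harmonic spacing would not --- and the contradiction that closes the argument is with the shrinking of the decomposition elements, not with the limit ``going tame.'' Without these ingredients the claimed inequality $\delta_n^{-1}\ge\exp\bigl(\epsilon_n^{-1}/L_j^\alpha(\epsilon_n^{-1})\bigr)$ is not derived.
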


We call a moc established over the entire topological conjugacy class inherent (imoc). That is, we take the imoc to be the slowest growing $\delta^{-1}(\epsilon^{-1})$ function over a conjugacy class. This is not entirely precise since two conjugates $I^h$ and $I^{h^\pr}$ could have their mocs crossing back and forth as $\epsilon \ra 0$. For this reason we usually speak of moc$(I^h) \overset{\infty}{\geq} f$, meaning for some sequence of $\epsilon \ra 0$, $\delta^{-1}_{I^h}(\epsilon^{-1}) \geq f(\epsilon^{-1})$; and if this holds for all conjugates $h$, we say imoc$(I^h) \overset{\infty}{\geq} f$.

In this paper we define imoc by minimizing over the conjugacy class but leave the ambient Riemannian metric fixed. (In our case, the ambient metric is that of the unit 3-sphere.) One could go further and also minimize over Riemannian metrics, and even minimize over the underlying differential structures on the ambient manifold $M$ as well. When dim$(M) = 4$, there may be countably many such structures (up to diffeomorphism), even for $M$ compact, and this might be interesting to do. To illustrate the simplifying effect of minimizing over the Riemannian metrics as well as the conjugacy class, we claim that for a finite group action on a 2D surface the imoc $= \id$ (simultaneously for all group elements). Dimension 2 is too low for the wildness; the action is necessarily conjugate to a smooth action. Then, averaging the Riemannian metric realizes the claim. However, in this paper we fix the standard metric on $S^3$.

\subsection*{Proof strategy for Theorem \ref{paper-thm}}
We study the geometry of the fixed 2-sphere, FIX, of $I^h$, or more precisely a smooth approximation thereof, to locate a sequence of planar disks $\Delta_i$ with the properties (1) $\de \Delta_i \subset$ FIX, (2) $a_i \coloneqq \operatorname{area}(\Delta_i)$, and (3) There is a point $y \in \operatorname{int}(\Delta_i)$ such that $\operatorname{dist}(y,I^h(y)) > d_i$ where upper bounds on $a_i$ and lower bounds on $d_i$ are obtained. In particular, we show $a_i$ decays roughly like $\frac{1}{2^i}$ while $d_i$ decreases no faster than $\frac{1}{i}$, up to log factors. For any $y$ in such a disk $\Delta$ of area $a$, there must be a point $x$ on $\de \Delta$ with $\operatorname{dist}(x,y) < \sqrt{\frac{a}{\pi}}$.

Because $x$ is fixed, we have then found pairs of points $(x_i,y_i)$ with $\operatorname{dist}(x_i,y_i) \coloneqq \delta_i \approx \frac{1}{\sqrt{2}^i}$ and $\operatorname{dist}(I^h(x_i), I^h(y_i)) \coloneqq \epsilon_i \approx \frac{1}{i}$. This yields the claimed relationship between the $\epsilon$ and $\delta$ in the definition of continuity: $\delta^{-1}(\epsilon^{-1}) \approx \sqrt{2}^{(\epsilon^{-1})}$. Some details of the proof introduce a poly log in the exponent, but we conjecture that, in truth, $\delta^{-1}$ is fully exponential in $\epsilon^{-1}$.

We use the term \emph{$(a,d)$-stretching} for the imputed distortion of distance implied by a disk $\Delta$ with the properties above.

The proof of Theorem \ref{paper-thm} has an immediate scholium. The Bing involution has a large family of relatives generated by an operation, \emph{ramification}, see Figure \ref{fig:finite_approx}(b) and surrounding text. Our analysis applies equally to ramified examples and rapidly growing ramification drives a parameter of our proof, $\text{(area)}^{-1}$, rapidly towards infinity. So ramified Bing involutions can enjoy an arbitrarily growing imoc. More details and notation are given in Section 3, here is a rough statement:

\begin{customlm}{Scholium \ref{scholium}}
	Given any monotone increasing function $f: \R^+ \ra \R^+$ there is an involution $J: S^3 \ra S^3$ of the 3-sphere such that for any topological conjugate $J^h = h^{-1} \circ J \circ h$, $h: S^3 \ra S^3$ a homeomorphism, its associated $\delta^{-1}(\epsilon^{-1})$ satisfies $\delta^{-1}(x_i) \geq f(x_i)$ for an unbounded sequence of $\{x_i\}$. Furthermore, for any growth function $f$ that is at least exponential with a sufficiently large base (that is, $f(x) > c^x$ for $x$ sufficiently large and some fixed $c > 1$),  the examples and estimates are quite tight; for such a growth function $f$, we build $J$ with moc agreeing with $f$ up to a linear factor, and then show that the moc can diminish over the conjugacy class $\{J^h\}$ only slightly, by dividing the argument by a poly log, e.g.\ $f(x) \ra f(\frac{x}{c \log^2 x})$.
\end{customlm}

\subsection*{A note on quasi-conformal category}
This paper investigates distortion of length through lower bounds on the inherent modulus of continuity imoc, over the conjugacy class of a homeomorphism. But one may also ask potentially more delicate questions about the distortion of angle, again over the conjugacy class. Instead of asking how a map may change the distance between nearby points $x$ and $y$, one may ask how that map may change the radian measure of the angle at $y$ of three nearby points $x,y,x^\pr$. In fact, the problem list \cite{heinonen97} we cited poses the question about the Bing involution in these terms.  For simplicity we have written this paper in the language of length distortion: with $(a,d)$-stretching appearing as its basic measure. But the arguments we present on distortion of length can easily be extended to show similar lower bounds on distortion of angle. In particular, no conjugate $I^h$ of the Bing involution can be a quasi-conformal map.

Let us explain. The concept of $(a,d)$-stretching is used as a tool to establish: the bag lemma (\ref{lm:bag}), its Corollary (\ref{cor:baglemma}), and the J-lemma (\ref{lm:Jlemma}). The way it works is that we identify a planar disk  $D$ of area $<a$ with $\de D \in \mathrm{FIX}$ and some point $y \in D$ transported by the map $k$ to $k(y)$, $\operatorname{dist}(y,k(y))>d$. (In the main theorem $k=I^h$, then in the scholium $k=J^h$, and in the conclusion section $k=g$.) We note that some point $x \in \de D$ must lie within $(\frac{a}{\pi})^{\frac{1}{2}}$ of $y$ implying that the length $(\frac{a}{\pi})^{\frac{1}{2}}$ has been stretched by $k$ to length $d$. But instead we could say that within any 30 degree sector around $y$ there is a point $x \in \de D$ that lies within $\sqrt{12}(\frac{a}{\pi})^{\frac{1}{2}}$ of $y$.  Now consider such an $x$, and another, similar, $x^\pr$ lying in a 30-degree sector rotated 180 degrees from the first. Now both $x$ and $x^\pr$ are fixed and $x,y,x^\pr$ will have a nearly straight angle at $y$, between 150 and 210 degrees. However, $k(x),k(y),k(x^\pr)$ has an angle at $k(y)$ of less than $\frac{2\sqrt{12}}{d}(\frac{a}{\pi})^{\frac{1}{2}}$ radians, so indeed the estimated ratio of angles agrees, up to multiplicative constants, with our estimated ratio of lengths. We will not remark on this further until the appendix, but all lower bounds that we will mention on metric distortion also correspond to lower bounds on angle distortion. After seeing an early draft of this paper, Dennis Sullivan and Matthew Romney pointed out to us that $\frac{1}{K}$-H\"{o}lder continuity is a property of all $K$-quasi-conformal maps (Corollary 6 of \cite{gehring62}), so the above deduction is not strictly necessary. However, we retain it since it implies more detailed information on distortion of angle than the formal fact alone.

It is time to describe the Bing involution $I$.

The Bing involution, $I$, is defined as the quotient of the standard reflection $\tld{I}$ in $S^2 \subset S^3$ by a certain reflection-invariant decomposition $\mathcal{D}$. The nontrivial elements of $\mathcal{D}$ are the components of $\bigcap_{i=0}^\infty \mathcal{\tld{T}}_i^\pr$, where $\mathcal{\tld{T}}_i^\pr$ consists of a disjoint union of $2^i$ solid tori, $\coprod \tld{T}_\sigma^\pr$ where $\sigma$ is a binary string of length $i$ and for each $\tld{T}_\sigma^\pr$ of $\mathcal{\tld{T}}_i^\pr$, the tori $\tld{T}_{\sigma 0}^\pr$ and $\tld{T}_{\sigma 1}^\pr$ are solid tori of $\mathcal{\tld{T}}_{i+1}^\pr$ that are interlocked in $\tld{T}_\sigma^\pr$ as illustrated in Figure \ref{fig:finite_approx}(a). The two sub-tori are called \emph{Bing doubles} or \emph{Bing pairs}. The tori that occur in $\mathcal{\tld{T}}_i^\pr$ or their images under a homeomorphism will be referred to as Bing solid tori (Bst). 

The central insight in Bing's seminal paper \cite{bing52} is that $\mathcal{D}$ is shrinkable\footnote{In a nutshell, shrinkability of the Bing decomposition means that for every $\epsilon > 0$ and $i \in \Z^+$ there is a $j \in \Z^+$, $j > i$, such that there is a homeomorphism $h$ that is the identity on $S^3 - \mathcal{\tld{T}}_i^\pr$ such that for all $k$, $1 \leq k \leq 2^j$, $\operatorname{diam}(h(\tld{T}_\sigma^\pr)) < \epsilon$. The general definition is in Section 1.1.} (see \cites{daverman86,edwards80} for definitions). The non-degenerate elements of $\mathcal{D}$ shrink to a wild Cantor set, which we refer to as Bing's Cantor Set (BCS). Given an explicit sequence of shrinking homeomorphisms, one obtains an explicit involution $I$ from $\tld{I}$. Since no one has ever taken the trouble to give fully explicit shrinking homeomorphisms (there are many small choices to be made), there really is no preferred Bing involution but rather $I$ is any element of a conjugacy class $\{I^h\}$ where $I$ is the result of a particular shrink and $h: S^3 \ra S^3$ is an arbitrary homeomorphism.

\begin{figure}[p]
	\centering
	\input{Inserts/Fig1.1}
	\caption{}\label{fig:finite_approx}
\end{figure}

In Figure \ref{fig:bingpair}(b) we see a finite approximation to what the FIX set of the Bing involution would look like given the first couple displacements of daughter Bing pairs within their mother in Bing's 1988 shrink. Similar distortions of FIX occur in all known shrinks. What we will prove in this paper seems apparent in the figure. We see these long thin tendrils such as $D_{00}$ whose boundary bounds a small disk on a flat meridional disk but whose reflected image of that small disk must go around the tendril and have quite a large diameter. Looking at known shrinks (for example \cite{bing88}---see Figure \ref{fig:bingpair} for early stages) leads to the conclusion that the diameter of the small disk decreases exponentially with the stage $k$, and the diameter of the large disk only harmonically. Immediately this gives an exponential modulus of continuity (moc). So what is the difficulty? The entire issue is that we are studying the conjugacy class of $I$, so whatever picture we draw of FIX and whatever analysis we make of that picture must survive an arbitrary topological change of coordinates. In particular, small disk intersections of shrunk Bing tori with flat meridional disks may have relatively large diameter even with arbitrarily small area.  So the whole question becomes, can this Figure \ref{fig:bingpair} be redrawn by composing with a homeomorphism $h$ of $S^3$ to spoil the conclusion that there is a disk of small diameter reflected into a disk of large diameter, the ratio being exponential in $k$, or nearly so. Figures \ref{fig:bingpair} and \ref{fig:rotated_rings} illustrate two examples in the conjugacy class we are studying. In both of these examples, the intersections of stage tori with planes have small diameters and small areas. But in other conjugates, those intersections may be grossly distorted to have large diameters even when their areas decrease, and the intersections may have strange non-convex shapes. 

In considering this problem, one must take into account the most violent possible homeomorphic transformations of this picture (Figure \ref{fig:bingpair}). It turns out that if one adds an extra assumption that the homeomorphism $h$ approximately preserves the path metric geometry of the boundaries of the Bing solid tori (Bst) (i.e.\ that there is a fixed constant $K$ such that $h$ restricted to the boundary of each Bing solid torus is $K$-biLipschitz to its image), then it is much easier to prove our theorem. In fact, the authors knew this special case in 1982. This raises the question: Had we already considered the case with the least length distortion, that is, could the modulus of continuity actually be lowered by distorting the tori? What possible good could arise from unnecessarily distorting the defining sequence of Bst and thus the geometry of the Bing Cantor set (BCS)? How could complicating the Bst make the involution \emph{more} regular? Since in the end we prove this does not happen, the best we can offer is an analogy to a geometric situation where similar distortion \emph{does} increase regularity.

In 1974, Schweitzer \cite{schweitzer74} constructed a $C^1$ counterexample to the Seifert conjecture. He found a $C^1$ vector field on $S^3$ with no compact orbits. The core of his construction was to ``bust'' isolated closed orbits by forcing them to encounter a transverse punctured torus supporting a $C^1$ Denjoy flow \cite{denjoy32}. Harrison realized \cite{harrison88} that there is a topological change of coordinates which wrinkles the punctured torus in $S^3$ so as to increase its Hausdorff dimension from two to three, such that the overall regularity of the flow containing the Denjoy example increases to $C^{3-\epsilon}$ (on careful analysis the Schweitzer example is $C^{2-\epsilon}$, so the improvement matches exactly the increase in Hausdorff dimension). In our case, the Bing Cantor set (BCS) has Hausdorff dimension $=1$ (for standard shrinks). So there is room to choose more exotic shrinks to fluff it out to three dimensions. A priori we need to show that there can be no Harrison-like improvement in regularity. To complete the Seifert conjecture story: we note that Kuperberg \cite{kuperberg94} later constructed a $C^\infty$ counterexample by different methods. Shortly thereafter, Kuperberg and Kuperberg \cite{kk96} enhanced the example to the analytic category. Readers fond of dynamics may enjoy our appendix on infinite order maps.

\begin{figure}[ht]
    \centering
    \input{Inserts/Fig1.2.tex}
    \caption{Everything on the left is part of $D_0$, a wild meridional disk that is part of the fixed point set, FIX, of $I$. In particular, $D_0$ will contain every point in the wild Cantor set that is in $T_0$.}\label{fig:bingpair}
\end{figure}

Based on the theorem and scholium, we conjecture that wild actions must have correspondingly ``wild'' moc. Notice that if a finite group action is topologically conjugate to a smooth action, then the metric can be averaged so that the action is by isometries---in which case there is no stretching at all. Thus the conjecture asserts in dimension 3 a gap between no stretching and substantial stretching---in contrast to the case in higher dimensions discussed below.

\begin{conj}[Gap Conjecture]
	Suppose a finite group $F$ acts on a 3-manifold by homeomorphisms. If the moc of each element of $F$ is subexponential, then the action is topologically conjugate to a smooth ($C^\infty$) action. Note that this conjecture implies a strengthening of our main result; it would give a fully exponential lower bound for the imoc of the Bing involution, effectively removing the square root in the exponent.
\end{conj}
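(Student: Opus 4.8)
\subsection*{A proof proposal for the Gap Conjecture}

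The plan is to deduce the conjecture by combining the geometrization-based classification of smooth finite group actions on $3$-manifolds with the $(a,d)$-stretching machinery of Theorem~\ref{paper-thm} run in reverse. On the classification side there are two inputs. First, by the orbifold theorem together with geometrization and its equivariant refinements, every smooth finite group action on a closed $3$-manifold is (smoothly, hence topologically) conjugate to an action preserving one of Thurston's geometries, in particular to an isometric action for a suitable Riemannian metric. Second, a finite group action that is \emph{tame} --- meaning the fixed-point set $\mathrm{FIX}(F')$ of every subgroup $F'\leq F$ is a locally flat submanifold and near each orbit the action is locally linear --- is smoothable, by the equivariant smoothing theory available in dimension~$3$ (equivariant Moise together with $\mathrm{PL}=\mathrm{DIFF}$ in this dimension). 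Given these, it suffices to establish the implication: \emph{if every element of $F$ has subexponential moc, then the action is tame}. That this is the right target is exactly the lesson of the present paper: Bing's involution is not smoothable precisely because its $\mathrm{FIX}$ is a wildly embedded $2$-sphere, failing the $1$-LCC condition, and it is that failure which Theorem~\ref{paper-thm} converts into a near-exponential moc.

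The engine would be a \emph{quantitative wildness-implies-stretching} lemma, the converse of the paper's main device. Suppose an element $g\in F$ has a fixed $2$-manifold $\Sigma$ that is not locally flat at a point $p$, so that $\Sigma$ fails to be $1$-LCC at $p$ from some complementary side: there is an $\epsilon_0>0$ such that for arbitrarily small $d>0$ one finds a loop $\gamma\subset B_d(p)$, lying in that complementary component, bounding no disk there inside $B_{\epsilon_0}(p)$. Since $g$ fixes $\Sigma$ pointwise and interchanges the two complementary germs at $p$, pushing $\gamma$ across $\Sigma$ by $g$ and capping off produces a planar disk $\Delta$ with $\partial\Delta\subset\Sigma$ and an interior point displaced by $g$ a distance $\gtrsim d$ --- precisely an $(a,d)$-stretching datum, with $d$ under control. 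The crucial missing ingredient is an \emph{area estimate}: one must be able to take $\mathrm{area}(\Delta)$ so small that $\log\!\big(1/\mathrm{area}(\Delta)\big)$ grows at least linearly in $1/d$ along the sequence of scales. Granting that, the elementary length-distortion computation of the paper --- a point initially within $\sqrt{a/\pi}$ of a fixed point of $\partial\Delta$ is thrown to distance $\gtrsim d$ --- forces $\delta^{-1}(\epsilon^{-1})\gtrsim c^{\,\epsilon^{-1}}$ along a sequence $\epsilon\to 0$, contradicting subexponential moc. One runs the same scheme on the lower-dimensional fixed strata and, within the free part, on any invariant wild Cantor-set obstruction, where analogues of the bag lemma~(\ref{lm:bag}) and the $J$-lemma~(\ref{lm:Jlemma}) take over; when no obstruction survives the moc bound, tameness --- and hence smoothability --- follows.

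The main obstacle, and the reason this is still only a conjecture, is exactly that area estimate. Theorem~\ref{paper-thm} and Scholium~\ref{scholium} deliver it for the Bing pattern and its ramifications, because there the nested-torus combinatorics pin down the areas stage by stage; but for an \emph{arbitrary} wild $\mathrm{FIX}$ one needs a structure theory of wild $2$-spheres in $S^3$ that is simultaneously fully general (covering every defining sequence and every topological flavor of wild embedding) and quantitative (controlling \emph{areas}, not merely engulfing or homotopy data). The cleanest way to package the requirement is a \emph{quantitative $1$-LCC} theorem: if $\Sigma$ fails $1$-LCC at $p$, then for a shrinking family of essential loops near $p$ in a complementary component, the least area of a singular spanning disk inside a fixed ball grows super-polynomially as the loops shrink. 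I do not see how to obtain this in the stated generality with current techniques, and expect it to be the decisive step; everything else should be comparatively routine --- upgrading local flatness of all strata to equivariant local linearity in dimension~$3$, and the bookkeeping (as in the body of the paper) needed to turn a ``for each element'' hypothesis into a single contradiction by passing to one subgroup-generated element, one displaced disk, and one sequence of scales.
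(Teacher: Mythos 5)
First, note that the statement you are addressing is the Gap Conjecture: the paper offers no proof of it (it is posed precisely because the authors cannot prove it), so there is nothing to compare your argument against except the paper's surrounding evidence. Your text is, as you say yourself, a strategy outline rather than a proof, and the gap you name --- a quantitative $1$-LCC/area estimate valid for an arbitrary wild fixed set --- is indeed a genuine and decisive missing ingredient. But the gap is deeper than your write-up suggests, because your proposed engine, as stated, would prove too much. The paper's own examples of Lipschitz wild involutions of $S^n$, $n \geq 4$ (the doubled Mazur manifolds of Figure \ref{fig:geom_series}), have codimension-one fixed sets that are not ANRs, in particular fail $1$-LCC badly, yet admit conjugates with linear moc. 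So ``$g$ interchanges the two complementary germs at a non-locally-flat point, hence essential small loops get converted into $(a,d)$-stretching data'' cannot be correct without an essentially three-dimensional input; in the paper the exponentially small area $a$ is extracted not from wildness per se but from the specific nested Bing-torus combinatorics (the residual disk-with-holes bookkeeping feeding Lemma \ref{lm:bag}, Lemma \ref{lm:Jlemma} and Proposition \ref{prop:final_4}). Any ``quantitative wildness implies stretching'' lemma must therefore build in a structure theory of wild codimension-one fixed sets in $3$-manifolds strong enough to manufacture small-area planar disks with boundary on $\mathrm{FIX}$ spanning a definite distance --- which is exactly the content one would need to invent, not a converse one can read off from Theorem \ref{paper-thm}.

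Second, your closing claim that ``everything else should be comparatively routine'' understates two further obstacles. The Gap Conjecture covers actions whose wildness lives in codimension two --- e.g.\ the Montgomery--Zippin involution $I_\pi$ with wild fixed circle discussed in Section 3 --- and there the bag-lemma mechanism has no direct analog: the bent-axis exercise (Figures \ref{fig:I_pi} and \ref{fig:min_stretch}) shows the natural codimension-two replacement yields only a square-root of the codimension-one distortion, and the paper explicitly flags even the formulation of such a lemma as an open starting point. So ``run the same scheme on the lower-dimensional fixed strata'' is not bookkeeping; it is a second open problem. Likewise the reduction ``subexponential moc $\Rightarrow$ tame $\Rightarrow$ smoothable'' needs more care than equivariant Moise: local flatness of all fixed strata does not immediately give local linearity of the action near singular orbits, and that upgrade (plus the passage from locally linear to smooth) should be cited or argued, even if it is known territory compared with the two gaps above. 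In short: the architecture (reduce to tameness, derive stretching from wildness) is the natural one and matches the authors' motivation, but the two load-bearing steps --- a genuinely $3$-dimensional quantitative area estimate in codimension one, and any estimate at all in codimension two --- are precisely what make this a conjecture rather than a theorem.
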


In higher dimensions the situation is quite different. We next give examples of wild involutions on $S^n$, $n\geq 4$, with fixed sets of dimension $n-1$ that are Lipschitz, thus demonstrating the necessity of the three-dimensional techniques of our proof. Let us start with $n=4$.

A Mazur manifold $M$ is a compact contractible 4-manifold with boundary which can be described as: 0-handle $\cup$ 1-handle $\cup$ 2-handle, where the 2-handle is attached by a curve homotopic but not isotopic to the product factor circle $S^1 \times \ast \subset S^1 \times S^2 = \de($0-handle $\cup$ 1-handle$)$. It follows from Gabai's resolution of Property R \cite{gabai87}---i.e.\ only (0-framed) surgery on the trivial knot in $S^3$ can produce $S^1 \times S^2$---that $\de M \not\cong S^3$. (Proof: Consider the surgery on $\de M$ that reverses the 2-handle attachment to obtain $S^1 \times S^2$. If $\de M \cong S^3$, that surgery must have been on an unknot and the original 2-handle attachment made to the product circle $S^1 \times \ast$.) By the Poincar\'{e} conjecture \cites{perelman1,perelman2,perelman3}, $\pi_1(\de M) \not\cong \{e\}$, since we have just seen that $\de M \not\cong S^3$.

It is well known that the untwisted double of $M$, $M \cup_{\id} \lbar{M} \cong S^4$. Again the proof is by handlebody theory: $M \cup_{\id} \lbar{M} = \de(M \times [0,1])$, but crossing with $[0,1]$ allows the attaching region of the 2-handle to unknot proving $M \times [0,1] \cong B^5$, and, therefore, $M \cup_{\id} \lbar{M}$ is isomorphic to $S^4$. 

We can view $S^4$ as an infinite connected sum of 4-spheres, each half the diameter of the previous one (see Figure \ref{fig:geom_series}), making $S^4$ the limit of a geometric series where addition is connected sum. Do the connected sums symmetrically near the fixed sets and complete to create the metric space $X$. $X$ is biLipschitz equivalent to $S^4_\text{std}$ and $X$ supports an involution that restricts to the $M \leftrightarrow \lbar{M}$ involution on each piece.

\begin{figure}[ht]
	\centering
	\begin{tikzpicture}[scale=1.3]
    \draw (-2,0) arc (180:15:2);
    \draw (-2,0) arc (-180:-15:2);
    \draw (1.93,0.52) arc (148:15:1);
    \draw (1.93,-0.52) arc (-148:-15:1);
    \draw (3.74,0.25) arc (149:15:0.5);
    \draw (3.74,-0.25) arc (-149:-15:0.5);
    \draw (4.65,0.11) arc (150:15:0.25);
    \draw (4.65,-0.11) arc (-150:-15:0.25);
    \node at (-2.7,0) {$F$};
    \draw[->] (-2.5,0) -- (-2.1,0);
    \node at (5.6,0) {$\dots$};
    
    \draw[dashed] (-2,0) arc (180:0:1.95 and 0.4);
    \draw[dashed] (1.93,0) arc (180:0:0.9 and 0.2);
    \draw[dashed] (3.74,0) arc (180:0:0.45 and 0.1);
    \draw[dashed] (4.65,0) arc (180:0:0.23 and 0.05);
    
    \draw[decorate, decoration=snake] (-2,0) arc (-180:0:1.95 and 0.4);
    \draw[decorate, decoration=snake] (1.93,0) arc (-180:0:0.9 and 0.2);
    \draw[decorate, decoration=snake] (3.74,0) to[out=-40,in=220] (4.65,0);
    \draw[decorate, decoration=snake] (4.65,0) to[out=-20,in=200] (5.1,0);
    
    \draw (1.93,0.52) arc (90:-90:0.15 and 0.52);
    \draw[dashed] (1.93,0.52) arc (90:270:0.15 and 0.52);
    \draw (3.74,0.25) arc (90:-90:0.08 and 0.25);
    \draw[dashed] (3.74,0.25) arc (90:270:0.08 and 0.25);
    \draw (4.65,0.11) arc (90:-90:0.04 and 0.11);
    \draw[dashed] (4.65,0.11) arc (90:270:0.04 and 0.11);
    
    \node at (0,1) {$M$};
    \node at (0,-1) {$\lbar{M}$};
    \node at (2.8,0.5) {$\frac{1}{2}M$};
    \node at (2.8,-0.7) {$\frac{1}{2}\lbar{M}$};

    \draw[decorate, decoration={brace,amplitude=10pt}] (6,2) -- (6,-2) node[midway, xshift=1.75em] {$X$};
    
    \draw[thick] (1.93,0) ellipse (0.6 and 0.7);
    
    \draw[thick] (3,3) circle (1.5);
    \draw (1.88,0.7) to[out=80,in=-75] (1.75,2.18);
    \draw (2.1,0.72) to[out=50,in=200] (3.3,1.53);
    \draw (3,3.7) arc (90:-90:0.2 and 0.7);
    \draw[dashed] (3,3.7) arc (90:270:0.2 and 0.7);
    \draw (3,3.7) arc (140:100:1.5);
    \draw (3,3.7) arc (30:48:2.5);
    \draw (3,2.3) arc (-140:-100:1.5);
    \draw (3,2.3) arc (-30:-48:2.5);
    \draw[decorate, decoration=snake] (2.65,3) -- (4.1,3);
    \draw[fill=black] (3.2,2.93) circle (0.2ex);
    \draw (2.4,4.38) arc (90:-90:0.25 and 1.38);
    \draw[dashed] (2.4,4.38) arc (90:270:0.25 and 1.38);
    \draw (3.9,4.19) arc (90:-90:0.2 and 1.19);
    \draw[dashed] (3.9,4.19) arc (90:270:0.2 and 1.19);
    
    \node at (5.55,3.3) {Blow-up of};
    \node at (5.8,3) {connected sum};
    \node at (5.1,2.65) {area};
\end{tikzpicture}
	\caption{Picture of the infinite connected sum.}\label{fig:geom_series}
\end{figure}
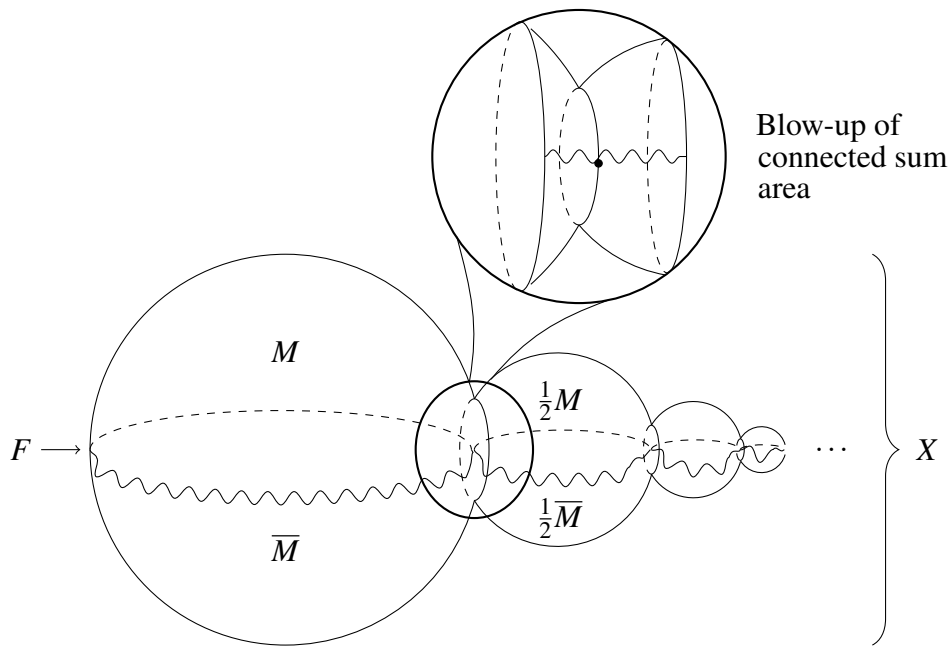

Two things may be observed: (1) the resulting involution of $X$ has the same Lipschitz constant as the original smooth involution $M \leftrightarrow \lbar{M}$ of $S^4$, and conjugating by the equivalence $X \leftrightarrow S^4$ still results in a Lipschitz map. (2) The fixed set $F$ has infinitely generated fundamental group $= \ast_{i=1}^\infty (\pi_1(\de M))$, so $F$ is not even an ANR. Hence the involution is Lipschitz but cannot be made smooth in any coordinates.

To build similar examples in all higher dimensions, $n \geq 5$, all we need is a supply of compact contractible $n$-manifolds $M^n$ with $\pi_1(\de M^n) \neq \{e\}$, with standard untwisted doubles: $M^n \cup_{\id} \lbar{M}^n \cong S^n$. These may be obtained inductively from any 4D Mazur manifold, $M^4$, as follows: let $M^n = M^{n-1} \times S^1 \cup$ 2-handle, where the 2-handle is attached (with any framing) to $\ast \times S^1$, $\ast \in \de M^{n-1}$, $n \geq 5$. On the level of homotopy type $M^n \simeq (\text{pt.} \times S^1) \cup$ 2-cell $\simeq \ast$. Finally, $\de M^n \cong \mathcal{S}(\de M^{n-1} \times S^1)$, where $\mathcal{S}$ represents a 1-surgery. Hence $\pi_1(\de M^n) \cong \pi_1(\de M^{n-1}) \times Z \slash_{t = \id}$, $t$ a generator of $Z$, and so $\pi_1(\de M^n) \cong \pi_1(\de M^{n-1}) \neq \{e\}$.

The high dimensional examples just presented contrast with our result in dimension 3. Curt McMullen pointed out to us that a classical result in two dimensions also \emph{morally} contrasts with our result. It is attributed to Beurling (between lines (2) and (3) of \cite{ahlfors63}): For every quasi-circle $C$ in $S^2$, there is a Lipschitz reflection of $S^2$ with $C$ as fixed set. Dimension two is too low to exhibit topological wildness, but in the Beurling-Alfors result we see that the involution can be more regular than its fixed set. For completeness, a quasi-circle can be defined as the image of the equator in $S^2$ under a quasi-conformal homeomorphism of $S^2$. For example, limit sets of quasi-Fuschian groups are quasi-circles. The point is that the fixed set only enjoys bounded distortion of angle, whereas the involution enjoys bounded distortion of length, a stronger condition. So, in dimension 2, as well as 4, 5, 6, $\dots$, we see examples where involutions are more regular, in various senses, than their fixed sets. Our theorem proves, and we make further conjectures about this theme in section 3, that dimension three is the exception.\footnote{After posting this paper, we became aware of \cite{grillet22}. In this paper, Grillet integrates a Lipschitz differential equation to  construct normal coordinates near the fixed set, to show that a finite order map of a 3-manifold which is nearly an isometry, i.e.\ distorts lengths by no more than factor of 1.00025, must be conjugate to a smooth action.}

\subsection{Decompositions}
We will build the fundamental examples: Alexander horned sphere (AHS), Alexander horned ball (AHB), and the Bing involution ($I$) via decompositions, so let us review the most basic definitions in the simplest context, compact metrics spaces.

Let $X$ be a compact metric space. A decomposition $\mathcal{D}$ is a collection of disjoint closed subsets $\{D_\alpha | D_\alpha \subset X \}$, called \emph{decomposition elements}, such that $X = \cup_\sigma D_\sigma$. Many of the $D_\alpha$ may be single points; the number of non-trivial elements, that is, $D_\alpha \neq \mathrm{pt}$, may be finite, countable, or uncountable. We are interested in the quotient map $\pi$ where each $D_\alpha$ is declared to be a point, thus producing a space $Y$ with the \emph{quotient topology}, i.e.\ the strongest topology making $\pi$ continuous:
\[
	\pi: X \ra Y
\]

It turns out that the following are equivalent \cite{daverman86}:
\begin{enumerate}
	\item $Y$ is metrizable.
	\item $Y$ is Hausdorff.
	\item The map $\pi$ is closed.
	\item $\mathcal{D}$ is \emph{upper-semi-continuous}, meaning every $D_\alpha \in \mathcal{D}$ has a saturated neighborhood system: that is, there are arbitrarily small open neighborhoods of each $D_\alpha$ consisting of unions of decomposition elements.
\end{enumerate}

We only consider decompositions enjoying these properties. The subject  primarily concerns the case when all decomposition elements $D_\alpha$ are \emph{cell-like}, meaning that every open neighborhood $U$ of $D_\alpha$ contains a smaller open neighborhood $V$ of $D_\alpha$ with the property that $V$ is null-homotopic within $U$. A map $\pi: X \ra Y$ is \emph{cell-like} iff for every $y \in Y$, $\pi^{-1}(y)$ is cell-like. As noted in \cite{edwards80}, this condition is equivalent to $\pi$ being a proper homotopy equivalence when restricted to the preimage of any open set of the target. The subject studies the gap between this property and the stronger condition that $\pi$ be \emph{near} a homeomorphism. The fundamental question is: ``When is a cell-like map $\pi$ approximated by homeomorphisms?'' (Since $X$ and $Y$ are metrizable and compact, the only topology to consider on functions is the norm topology.) There is an elegant answer: the Bing Shrinking Criteria, first stated in the form below by \cite{mcauley71} and with Edward's \cite{edwards80} proof.

\begin{customlm}{Bing Shrinking Criteria (BSC)}
	A surjective map $\pi: X \ra Y$ between compact metric spaces is approximated by homeomorphisms iff $\forall \epsilon > 0$, $\exists$ a homeomorphism $g: X \ra X$ such that:
	\begin{enumerate}
		\item $\operatorname{dist}(\pi g, \pi) < \epsilon$, and
		\item $\forall y \in Y$, $\operatorname{diam}\ g(\pi^{-1}(y)) < \epsilon$.
	\end{enumerate}
\end{customlm}

\begin{proof}
	The ``if'' direction is the one of interest. Let $Y^X$ be the space of maps (norm topology) from $X$ to $Y$. Let $E \subset Y^X$ be the closure of the set $\{\pi g \mid g : X \ra X \text{ a homeomorphism}\}$. And for $\epsilon > 0$, let $E_\epsilon$ be the open subset of $E$ consisting of those maps all of whose point pre-images have diameter $< \epsilon$. We next show that each $E_\epsilon$ is (open) dense in $E$: For any homeomorphism $k: X \ra X$, given $\epsilon>0$ there is an $\epsilon_0 > 0$, $\epsilon_0 < \epsilon$, such that sets of diameter $< \epsilon_0$ are carried by $k^{-1}$ to sets of diameter $< \epsilon$. Now choose $g$ as in criteria (1) and (2) for $\epsilon_0$. Then $\pi(gk)$ obeys (2) for $\epsilon$, and $\operatorname{dist}(\pi(gk),\pi k) = \operatorname{dist}((\pi g)k, \pi k) < \epsilon_0 < \epsilon$, so $\pi(gk)$ indeed approximates $\pi k$. Taking limits, $E_\epsilon$ is indeed dense in $E$. The map space $Y^X$, and thus $E$, are compact metric spaces, hence Baire spaces. So $E_0:= \cap_{\epsilon>0} E_\epsilon$ is also dense in $E$, but $E_0$ consists of homeomorphisms from $X$ to $Y$.
\end{proof}

The word \emph{shrinking} refers to the action of the $g: X \ra X$ in the BSC on the non-trivial preimages of $\pi$. Concretely, given $\{g_i\}$, $\epsilon_i$-shrinking homeomorphisms, $\epsilon_i \ra 0$, one can extract a subsequence of $\{\pi g^{-1} : X \ra Y \}$ with limiting homeomorphism $H: X \ra Y$. Given another such family $\{g_i^\pr\}$ and the corresponding $\epsilon_i^\pr \ra 0$ in the BSC with (sub-sequential) limit $H^\pr$, $H^{-1} \circ H^\pr$ is again a subsequential limit of $\{g_i \circ g_i^{\pr -1}\}$. It follows that the identification of $X$ with $Y$, when the BSC holds, is not canonical but may be precomposed with an arbitrary self-homeomorphism of $X$. Looking ahead, the Bing involution $I$ will be constructed on $S^3$ via the BSC, so this discussion explains our remark that $I$ is more naturally regarded as an entire conjugacy class $\{I^h | h: S^3 \ra S^3 \text{ a homeomorphism}\}$. Because $\mathcal{D}$ is $\tld{I}$-equivariant, $\tld{I}$ descends to an involution on $S^3 \slash \mathcal{D}$. But interpreting $I$ as an involution on $S^3$ requires choosing a shrink. A particular shrink will give a particular $I$ (see Figure \ref{fig:part_shrink}).

\begin{figure}[ht]
    \centering
    \begin{tikzpicture}[scale=1.1]
		\node at (0,0) {$S^3$};
		\draw[->] (0.35,0) -- (1.65,0);
		\node at (2,0) {$S^3$};
		\node at (1,0.3) {$\tld{I}$};
		\draw[->] (0,-0.45) -- (0,-1.15);
		\node at (-0.25,-0.75) {\footnotesize{$\pi$}};
		\node at (0,-1.5) {$S^3 \slash \mathcal{D}$};
		\draw[->] (0.7,-1.5) -- (1.3,-1.5);
		\node at (2,-1.5) {$S^3 \slash \mathcal{D}$};
		\draw[->] (2,-0.45) -- (2,-1.15);
		\node at (3.2,-0.6) {\footnotesize{$\pi$-approximate}};
		\node at (3.38,-0.9) {\footnotesize{homeomorphism}};
		\node[rotate=-90] at (0,-2.25) {$\cong$};
		\node at (0,-3) {$S^3$};
		\node[rotate=-90] at (2,-2.25) {$\cong$};
		\node at (2,-3) {$S^3$};
		\draw[->] (0.4,-3) -- (1.6,-3);
		\node at (1,-2.75) {$I$};
		\node at (-2.8,0) {\hspace{0.25em}};
	\end{tikzpicture}
    \caption{Choosing the homeomorphic approximation of $\pi$ is equivalent to selecting the conjugation $h$.}\label{fig:part_shrink}
\end{figure}

We now proceed to that icon of wild topology the Alexander Horned Sphere (AHS) [Ale1924]. Below in Figure \ref{fig:AHS} is Edward's hand sketch from \cite{edwards80}, reproduced with his permission.

\begin{figure}[ht]
	\centering
	\includegraphics{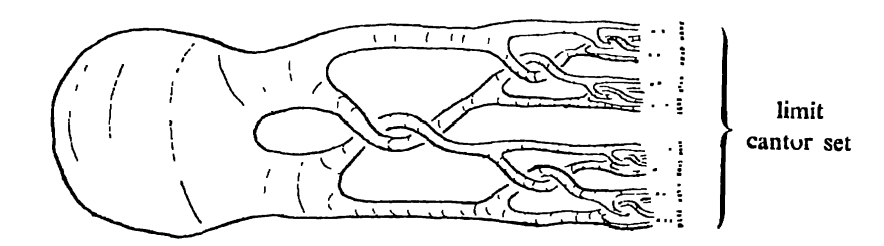}
	\caption{}\label{fig:AHS}
\end{figure}

The AHS can be understood as the result of quotienting a specific decomposition $\mathcal{D}_A$ of $S^3$ whose nontrivial elements comprise a Cantor set's worth of arcs each meeting the standard $S^2 \subset S^3$ in a single point. The decomposition is easily shrinkable by homeomorphisms $\{g_i\}$ which compress these arcs towards their endpoints off $S^2$. The reader should visualize the isotopy of the Cantor set in the plane which is effected by the braiding of the horns. The wildness of the AHS stems from the fact that this isotopy does not extend to an ambient isotopy of the plane. Thus the BSC tells us that $S^3 \slash \mathcal{D}_A \cong S^3$, so the 3-sphere has not been changed, but the 2-sphere becomes the \emph{wild} AHS. Its wildness is reflected in the fact that its exterior is no longer simply connected but instead has an infinitely generated fundamental group, which we return to momentarily.

Very often, decompositions are defined as the components of some $\bigcap_{i=0}^\infty C_i$, where the $C_i$ are smooth codimension zero submanifolds. That will be the case here. Figure \ref{fig:decompositions} below builds $\mathcal{D}_A$ as $\bigcap_{i=0}^\infty C_i$, where $C_i$ consists of $2^i$ disjoint, U-shaped solid cylinders. The components of intersection can be arranged to be a Cantor set of arcs.

\begin{figure}[ht]
	\centering
	\begin{tikzpicture}
        \node at (0,0) {\includegraphics[scale=0.65]{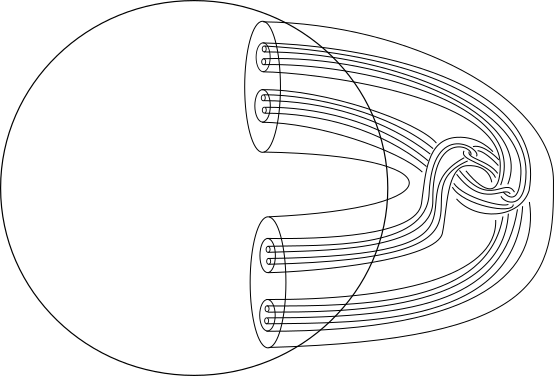}};
        \node at (-3.2,-2.2) {$S^2$};
        \node at (3.3,2.3) {$C_0$};
        \node at (5.35,0) {$C_1$};
        \draw[<->] (4.5,0.3) -- (5.05,0) -- (4.5,-0.3);
        \node at (1.35,0) {$C_2$};
        \draw[<->] (0.5,2) -- (1.05,0) -- (0.6,1.15);
        \draw[<->] (0.5,-1.95) -- (1.05,0) -- (0.6,-0.95);
    \end{tikzpicture}
	\caption{$C_2$ has 4 components, continuing the pattern as shown.}\label{fig:decompositions}
\end{figure}

In this subject, the term \emph{pseudo-isotopy} is used to mean a smooth (if the ambient manifold is smooth) ambient isotopy over $t \in [0,1)$ whose limit is not necessarily one-to-one. Technically, only discrete moments of the pseudo-isotopy figure in as the $g_i$ in the BSC, but visualizing the entire pseudo-isotopy is useful. Imagine pulling $S^2$ in Figure \ref{fig:decompositions} to the right, as if sucked by a vacuum, nearly to the $x$-direction saddle points on $\de C_i$ at time $t = 1 - \frac{1}{i}$. Visually, it should be apparent that the $t = 1$ limit restricted to $S^2$ is Figure \ref{fig:AHS}. Only a Cantor set CS of points on $S^2$ is pulled (sucked) to the right infinitely often. The tracks of CS give the non-trivial elements of $\mathcal{D}_A$, namely, a Cantor set of arcs. At the $t = 1$ limit, each arc has been crushed to its right endpoint, giving a surjection $\pi: S^3 \ra S^3$, with $\pi(S^2_{\text{std}}) = \mathrm{AHS}$. Notice that each point of $S^2$ not on the Cantor set stops moving at some time before time $t = 1$, and therefore $\pi$ is smooth on $S^2$ except on the Cantor set. One way to see that the AHS in $S^3$ is not topologically conjugate to a smooth submanifold is to realize that $\pi_1(\text{open exterior region})$ is an infinite free product-with-amalgamation of a tree of 2-generator free groups.

Consider the two \emph{closed} complementary regions of AHS. It follows from the ANR property of $S^2$ that these must be simply connected: the closed \emph{interior} region is clearly homeomorphic to the ball $B^3$, whereas the closed \emph{exterior} region, the Alexander Horned Ball AHB, is clearly not, having a non-simply connected interior. The AHB can also be described as an infinite 3D \emph{grope}, as defined, for example, in \cite{fq90}.

Given a space and subspace $(X,A)$ the double $D_A X$ is defined as: $(X \times 0 \indep X \times 1) \slash (a \times 0 \equiv a \times 1)$ for $a \in A$. Bing's 1952 paper considers the double of the AHB over its 2-sphere frontier, $D_{AHS}($AHB$)$. In decomposition language: AHB $= B^3 \slash \mathcal{D}_A$, where $B^3$ is the closed exterior component of $S^2$ in $S^3$. The obvious functoriality of decomposition maps under doubling gives the following diagram of homeomorphisms where $D_{S^2} \mathcal{D}_A$ is the reflection double of $\mathcal{D}_A$; so $D \mathcal{D}_A$ is the Bing decomposition $\mathcal{D}$ of $S^3$. Its non-trivial elements are the components of the infinite intersection $\bigcap_{i=0}^\infty D_{(C_i \cap S_2)} C_i$, as drawn (to only the second stage of doubling) in Figure \ref{fig:double}. Notice that as drawn each component is an arc meeting $S^2$ in one point:
\begin{center}
    \begin{tikzpicture}
        \node at (0,0.8) {$S^3 \slash \mathcal{D} \cong (D_{S^2}(B^3)) \slash D_{S^2} \mathcal{D}_A$};
        \node at (0,-0.8) {$D_{S^2}(B^3 \slash \mathcal{D}_A) \cong D_{\mathrm{AHS}}(\mathrm{AHB})$};
        \draw[->] (-1.6,0.4) -- (-1.6,-0.4);
        \draw[->] (1.6,0.4) -- (1.6,-0.4);
        \node at (1.8,0) {\footnotesize{$\cong$}};
        \node at (-1.8,0) {\footnotesize{$\cong$}};
    \end{tikzpicture}
\end{center}

\begin{figure}[ht]
	\centering
	\input{Inserts/Fig1.7.tex}
	\caption{}\label{fig:double}
\end{figure}

The astonishing conclusion of \cite{bing52} is that $\mathcal{D}$ shrinks (i.e.\ obeys BSC). So $D_{AHS}(\mathrm{AHB})$ is actually homeomorphic to $S^3$, and the involution $0 \leftrightarrow 1$ defined on doubles becomes an exotic involution on $S^3$. It is exotic, again, because the fixed 2-sphere is not topologically conjugate to a smooth submanifold (as evidenced by the fundamental group calculation).

Let us review the two shrinks of $\mathcal{D}$ that Bing published \cite{bing52} and \cite{bing88}. In both shrinks, Bing reduces the shrinks to an essentially 1D model where the only important measure of distance is displacement along the $x$-axis. Denoting the ``Bing tori'' dyadically, he lays them out along the $x$-axis and measures diameter discretely by choosing a large even integer $n$ and erecting $n$ parallel planes in intervals of $\frac{1}{n}$. So, the original tori are positioned as in Figure \ref{fig:plane_positions}.

\begin{figure}[ht]
	\centering
	\input{Inserts/Fig1.8.tex}
	\caption{}\label{fig:plane_positions}
\end{figure}

We will denote the image of a Bing torus after a shrinking homeomorphism by dropping the $\tld{\hphantom{m}}$. The tori are "rotated" so that each of the $k$-stage daughters meet only $n-k$ planes, Figure \ref{fig:rotated_rings}.

\begin{figure}[ht]
	\centering
	\input{Inserts/Fig1.9.tex}
	\caption{}\label{fig:rotated_rings}
\end{figure}

After $n-1$ generations, no $T_\sigma$, $\sigma$ a binary word of length $\abs{\sigma} = n-1$, meets more than one of the planes, so its $x$-axis extent is $\leq \frac{2}{n}$. Normal to the $x$-axis, we are free to have chosen a strong compression, so this procedure produces a homeomorphism $h$ that shrinks $n$th stage tori to diameter $O(\frac{1}{n})$.\footnote{Big $O$ notation.} Shrinking the tori of course shrinks the decomposition elements therein so we have met the two Bing Shrinking Criteria. (The first because these rotations of subtori, while large in the source, are small in the quotient space.)

Although the criteria have been met, visually there is a puzzle. It seems, having picked $n$, our strategy grinds to a halt at diameter $O(\frac{1}{n})$. One of us asked Bing about this feature and he said, ``It's like climbing the rope in a gym. After you pull up it does you no good to just keep pulling, you've got to let go with one hand and get a new grip.'' What he meant is that to proceed you need to choose a new $n_1 >> n$ and make $n_1$ tick marks (actually, planar meridional disks) along the partially shrunk $T_\sigma$'s. Now start over, reducing from $n_1$, to $n_1 - 1$, to $n_1 - 2$, $\dots$, the number of tick marks that successive daughters of these $T_\sigma$ cross. For a very long time \emph{nothing} happens with respect to reducing actual diameters (because of the folded nature of the $n_1$ tick marks), but as you get to $n_1 -1$ additional generations, those descendents $T_{\sigma \tau}$ are now crossing only a single tick mark and, therefore, have much smaller diameter than before. Then re-grip and continue.

Something to observe is the slowness of the shrink. At least $2^n$ descendent tori exist before all their diameters are $\leq \frac{2}{n}$. This exponential relationship (also a feature of Bing's second shrink) is the genesis of the modulus of continuity (moc) estimates in this paper.

For completeness, we summarize Bing's 1988 shrink, which he produced in answer to questions we asked him at that time. In his 1988 shrink, every other rotation of tori is \emph{greedy}, as it tries to cut diameters in half. The alternate rotations are \emph{patient}. It turns out greed does not speed the shrinking; it is only at steps indexed by $2^n - 1$, that diameters \emph{actually} are halved. So, the asymptotic rate of shrinking in Bing's 1988 shrink is the same as in the 1952 shrink. In pictures here is the idea of \cite{bing88} (Fig.\ \ref{fig:halving}).

Finally, we cite \cite{fs22}, a new, unexpected method of shrinking the Bing decomposition that the current authors constructed during the course of writing this paper. The shrink in \cite{fs22} demonstrated to us that a plausible, alternative, somewhat simpler strategy for completing the endgame of the proof ahead was doomed to failure.

\begin{figure}[ht]
	\centering
	\begin{tikzpicture}
	    \node at (0,0) {\includegraphics[scale=0.9]{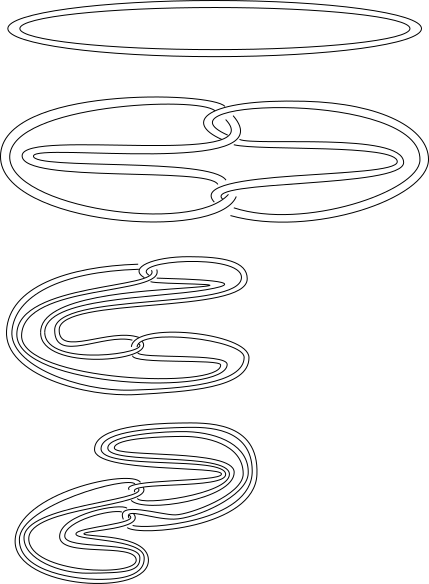}};
        \node at (1.7,5.1) {$\downarrow$ Step 1. Diam halves};
        \node at (0.5,1.2) {$\swarrow$ Step 2. Diam unchanged};
        \node at (3.2,1.2) {$\searrow$};
        \node at (3.7,0.9) {$\ddots$};
        \node at (0.1,-2.8) {$\searrow$ Step 3. Diam halves};
        \node at (-3.8,-2.8) {$\swarrow$};
        \node[rotate=90] at (-4.35,-3.3) {$\ddots$};
        \node at (0.5,-7.9) {$\downarrow$ Step $2^n-1$. Diam halves};
        \node at (-1.67,-7.3) {$\vdots$};
	\end{tikzpicture}
	\caption{}\label{fig:halving}
\end{figure}

\section{Proof of Theorem}
\subsection{Preliminary notation}
$S^3 = \{\vec{x} = (x_1,\dots,x_4) \mid \lVert \vec{x} \rVert = 1\}$, $I_0(x_1,\dots,x_4) = (x_1, x_2, x_3, -x_4)$ is a standard reflection which becomes the Bing involution $I$ after shrinking the Bing decomposition $\mathcal{D}$, whose non-trivial elements are $\cap_{i=1}^\infty \tld {\mathcal {T}^i}$, where $\tld {\mathcal {T}^i}$ is the union of the $2^i$ tori that are the columns in Figure \ref{fig:tree}. This decomposition, defined in the introduction, is $x_4 \leftrightarrow -x_4$ symmetric, but its shrink cannot be. Before shrinking, we have the unshrunk Bing tree Bt laid out in Figure  \ref{fig:tree}. It is a slight misnomer to speak of \emph{the} Bing involution since no one, certainly not Bing, endured the tedium of precisely specifying a particular pseudo-isotopy that shrinks $\mathcal{D}$. It is better to consider all pseudo-isotopies and hence the topological conjugacy class $\{I^h \coloneqq h^{-1} \circ I \circ h | \ h: S^3 \ra S^3 \text{ a homeomorphism}\}$. Going forward we simplify notation by using $I$ to denote the general conjugate. In the diagram below, the choice of the conjugator $h$ determines the involution $I$. 

\begin{figure}[ht]
	\centering
	\begin{tikzpicture}[scale=1.35]
    \node at (0,0) {$\tld{T}$};
    \node at (1,0.85) {$\tld{T}_0$};
    \node at (1,-0.85) {$\tld{T}_1$};
    \node at (2,1.3) {$\tld{T}_{00}$};
    \node at (2,0.4) {$\tld{T}_{01}$};
    \node at (2,-0.4) {$\tld{T}_{10}$};
    \node at (2,-1.3) {$\tld{T}_{11}$};
    \node at (3,1.55) {$\tld{T}_{000}$};
    \node at (3,1.05) {$\tld{T}_{001}$};
    \node at (3,0.6) {$\tld{T}_{010}$};
    \node at (3,0.2) {$\tld{T}_{011}$};
    \node at (3,-0.2) {$\tld{T}_{100}$};
    \node at (3,-0.6) {$\tld{T}_{101}$};
    \node at (3,-1.05) {$\tld{T}_{110}$};
    \node at (3,-1.55) {$\tld{T}_{111}$};
    
    \node[rotate=45] at (0.5,0.43) {$\longhookleftarrow$};
    \node[rotate=-45] at (0.5,-0.43) {$\longhookleftarrow$};
    \node[rotate=30] at (1.45,1.1) {$\hookleftarrow$};
    \node[rotate=-30] at (1.45,0.45) {$\hookleftarrow$};
    \node[rotate=30] at (1.45,-0.55) {$\hookleftarrow$};
    \node[rotate=-30] at (1.45,-1.2) {$\hookleftarrow$};
    \node[rotate=12] at (2.45,1.4) {$\hookleftarrow$};
    \node[rotate=-12] at (2.45,1.05) {$\hookleftarrow$};
    \node[rotate=12] at (2.45,0.5) {$\hookleftarrow$};
    \node[rotate=-12] at (2.45,0.2) {$\hookleftarrow$};
    \node[rotate=12] at (2.45,-0.3) {$\hookleftarrow$};
    \node[rotate=-12] at (2.45,-0.6) {$\hookleftarrow$};
    \node[rotate=12] at (2.45,-1.15) {$\hookleftarrow$};
    \node[rotate=-12] at (2.45,-1.5) {$\hookleftarrow$};
    
    \node[rotate=90] at (0,-1.75) {$\longhookrightarrow$};
    \node[rotate=90] at (1,-1.85) {$\hookrightarrow$};
    \node[rotate=90] at (2,-1.85) {$\hookrightarrow$};
    
    \node at (0,-3.15) {$\tld{D}_0$};
    \node at (1,-2.65) {$\tld{D}_{00}$};
    \node at (1,-3.65) {$\tld{D}_{01}$};
    \node at (2,-2.4) {$\tld{D}_{000}$};
    \node at (2,-2.9) {$\tld{D}_{001}$};
    \node at (2,-3.4) {$\tld{D}_{010}$};
    \node at (2,-3.9) {$\tld{D}_{011}$};
    
    \node[rotate=55] at (-0.45,-3.6) {$\longhookleftarrow$};
    \node[rotate=-55] at (-0.5,-4.85) {$\longhookleftarrow$};
    \node[rotate=30] at (0.45,-2.85) {$\hookleftarrow$};
    \node[rotate=-30] at (0.45,-3.5) {$\hookleftarrow$};
    \node[rotate=30] at (0.45,-4.8) {$\hookleftarrow$};
    \node[rotate=-30] at (0.45,-5.5) {$\hookleftarrow$};
    
    \node at (0,-5.15) {$\tld{D}_1$};
    \node at (1,-4.65) {$\tld{D}_{10}$};
    \node at (1,-5.65) {$\tld{D}_{11}$};
    \node at (2,-4.4) {$\tld{D}_{100}$};
    \node at (2,-4.9) {$\tld{D}_{101}$};
    \node at (2,-5.4) {$\tld{D}_{110}$};
    \node at (2,-5.9) {$\tld{D}_{111}$};
    \node at (-1.6,-4.15) {Fix set $= \tld{S}$};
    \node at (-1.6,-4.5) {\footnotesize{($\lVert \vec{x} \rVert = 1$ and $x_4 = 0$)}};
    
    \node[rotate=12] at (1.4,-2.5) {$\hookleftarrow$};
    \node[rotate=-12] at (1.4,-2.95) {$\hookleftarrow$};
    \node[rotate=12] at (1.4,-3.5) {$\hookleftarrow$};
    \node[rotate=-12] at (1.4,-3.95) {$\hookleftarrow$};
    \node[rotate=12] at (1.4,-4.5) {$\hookleftarrow$};
    \node[rotate=-12] at (1.4,-4.95) {$\hookleftarrow$};
    \node[rotate=12] at (1.4,-5.5) {$\hookleftarrow$};
    \node[rotate=-12] at (1.4,-5.95) {$\hookleftarrow$};
\end{tikzpicture}
	\caption{Let $\sigma$ be a (possibly empty) binary string, then each $\tld{T}_\sigma$ is a solid torus with $\tld{T}_{\sigma0} \cup \tld{T}_{\sigma1} \hookrightarrow \tld{T}_\sigma$ the inclusion of a \emph{Bing pair}, all pre-shrunk and in standard position. Set $\tld{\mathcal{T}}^i = \cup_{\abs{\sigma} = i} \tld{T}_\sigma$, be the columns of the Bing tree. $\abs{\sigma}$ denotes the length of the binary string $\sigma$.}\label{fig:tree}
\end{figure}
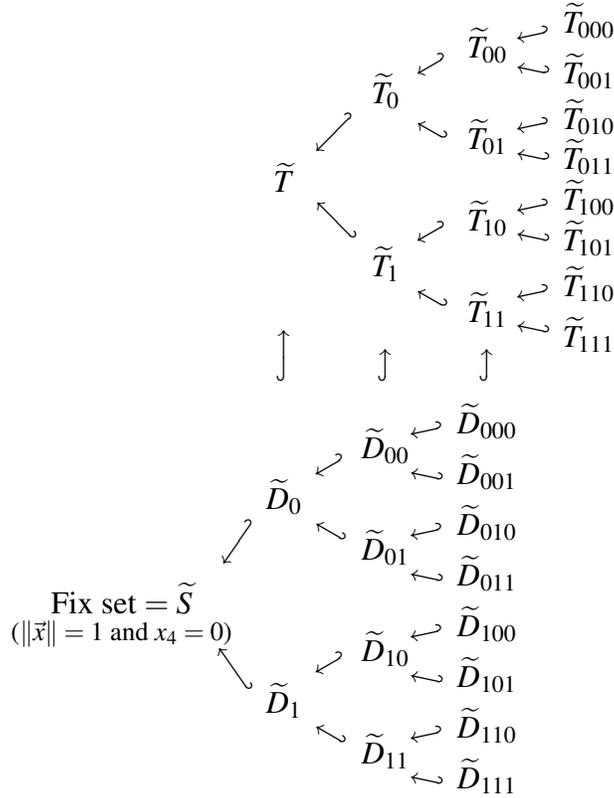

By definition, a choice of a shrink maps the Bing decomposition $\mathcal{D}$ to a Bing Cantor Set (BCS). Notice that unlike the Cantor set of wild points of the AHS, which is tame, the BCS is wild, that is, not definable as an intersection of balls. Recall $\tld{T}_\sigma$, $\sigma$ a finite binary string, denotes an unshrunk Bing solid torus. $\tld{T}_{\sigma = \varnothing}$ will denote the outermost, \emph{root}, solid torus. $\abs{\sigma}$ denotes the length of the string. An infinite $\sigma$ (indexed by the natural numbers) denotes a non-trivial decomposition element. Dropping the $\tld{\hphantom{m}}$ means the solid tori are (partially) shrunk. 

Almost our entire argument is in the context of a \emph{partial shrink}; that is, a homeomorphism $g: S^3 \ra S^3$ such that for some $i>0$, $g$ carries each stage $i$ solid torus to one with diameter $< \epsilon$, $\epsilon > 0$, that is, diam$(g(\tld{T}_\sigma)) < \epsilon$ for $\abs{\sigma} = i$. We set $T_\sigma \coloneqq g(\tld{T}_\sigma)$ and $D_\sigma \coloneqq g(\tld{D}_\sigma)$, so Figure \ref{fig:tree} with $\tld{\hphantom{m}}$ dropped becomes what we call the shrunk Bing tree (sBt). We call any $T_\sigma$ of the sBt a \emph{Bing solid torus} (Bst). WLOG we assume that all partial shrinking homeomorphisms restrict to $\id_{S^3 \setminus \tld{T}_\varnothing}$. So $T (=T_\varnothing) = \tld{T}_\varnothing$ is the Bing solid torus with $\sigma$ the empty string, the root. Occasionally, in the proof we refer to the Bing Cantor Set (BCS), which is the ends of the shrunk Bing tree (sBt) i.e.\ shrunk by a limit of partial shrinking homeomorphisms $g_i$. But even this usage of infinity is a mere convenience. We could, instead, reference a partial shrinking homeomorphism $g^\pr$ with all components of some $\mathcal{T}^i$ having diameter $<\epsilon$ and for some $l >> i$ all components of $\mathcal{T}^l$ having diameter $<\epsilon^\pr$, for $\epsilon^\pr << \epsilon$. The tiny components of $\mathcal{T}^l$ can play the same role as the BCS in our proof. Since WLOG [Moi 54] any partial shrinking homeomorphism in 3D can be approximated by a diffeomorphism or PL\ homeomorphism, it is fair to say that the entire proof is finitary and combinatorial. So, although we are discussing the (lack of) regularity of a wild map, we may apply the standard general position (which we assume throughout) and transversality arguments, familiar from the smooth category. Our proof (in detail) does not explicitly draw on the infinite complexity of wild topology. While we work on the smooth side of all limits, our guiding principle has been to follow Bing's ideas.\footnote{So much so that we often thought of Bing as a co-author.}

Let us take a moment to detail the passage to the smooth category. It is immediate from Bing's shrinking argument (either one) that some standard representative $I$ of the Bing involution is approximable by smooth involutions $\{I_i\}$, $i = 1,2,3, \dots$ (in the obvious topology, pointwise convergence). The general topological conjugate $I^h$ shares this property, since by [Moi54] there are PL or smooth approximations $\{h_i\} \ra h$, so $K_i \coloneqq I_i^{h_i}$ converges to $I^h$.

\begin{lemma}\label{lm:moc-limit}
    Given the above notation and a sufficiently small $\epsilon > 0$, let $\delta_i$ be the largest number such that $\operatorname{dist}(x,y) \leq \delta_i$ implies $\operatorname{dist}(K_i(x), K_i(y)) \leq \epsilon$. Without changing notation, pass to a subsequence such that $\delta_i$ converges to some $\delta$. Then $\delta$ is the largest number such that $\operatorname{dist}(x,y) \leq \delta$ implies $\operatorname{dist}(I^h(x), I^h(y)) \leq \epsilon$. (And, as a consequence, passing to the subsequence was actually unnecessary.)
\end{lemma}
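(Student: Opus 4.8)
The plan is to exploit the uniform convergence $K_i \to I^h$, which holds because $S^3$ is compact and all maps involved are continuous (indeed the convergence $I_i \to I$ and $h_i \to h$ is uniform, and composition is jointly continuous on $C(S^3,S^3)$ with the sup metric, so $K_i = I_i^{h_i} \to I^h$ uniformly). Write $\eta_i \coloneqq \sup_{x \in S^3} \operatorname{dist}(K_i(x), I^h(x)) \to 0$. First I would prove the two inequalities $\delta \geq \delta(\epsilon)$ and $\delta \leq \delta(\epsilon)$ separately, where $\delta(\epsilon)$ denotes the optimal constant for $I^h$ at scale $\epsilon$ (to be precise, the largest number such that $\operatorname{dist}(x,y)\le\delta(\epsilon)$ forces $\operatorname{dist}(I^h(x),I^h(y))\le\epsilon$; one should check this sup is attained, which follows from a routine compactness argument on the closed set $\{(x,y): \operatorname{dist}(x,y)\le t\}$ together with continuity of $I^h$, for $\epsilon$ small enough that the implication is non-vacuous).

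For $\delta \leq \delta(\epsilon)$: suppose $\operatorname{dist}(x,y) \leq \delta$. For each $i$, pick $x_i \to x$, $y_i \to y$ with $\operatorname{dist}(x_i,y_i) \leq \delta_i$ (possible since $\delta_i \to \delta$; e.g.\ rescale the geodesic from $x$ to $y$ slightly, or use that $\delta_i \to \delta$ to perturb the pair into the $\delta_i$-ball). Then $\operatorname{dist}(K_i(x_i), K_i(y_i)) \leq \epsilon$ by definition of $\delta_i$. Letting $i \to \infty$ and using $K_i(x_i) \to I^h(x)$ (because $\operatorname{dist}(K_i(x_i), I^h(x)) \leq \eta_i + \operatorname{dist}(I^h(x_i), I^h(x)) \to 0$ by uniform convergence plus continuity of $I^h$), and similarly $K_i(y_i) \to I^h(y)$, we get $\operatorname{dist}(I^h(x), I^h(y)) \leq \epsilon$. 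Hence $\delta \leq \delta(\epsilon)$. For $\delta \geq \delta(\epsilon)$: I would argue that if $\delta < \delta(\epsilon)$, then for large $i$ we have $\delta_i < \delta(\epsilon)$, so by maximality of $\delta_i$ there exist $x_i, y_i$ with $\operatorname{dist}(x_i,y_i) \leq \delta_i \ (\le \delta(\epsilon))$ but $\operatorname{dist}(K_i(x_i), K_i(y_i)) > \epsilon$; wait — this needs care, because $\delta_i$ being the largest good constant only gives bad pairs at distances slightly above $\delta_i$. So instead I would pick $x_i,y_i$ with $\operatorname{dist}(x_i,y_i) \le \delta_i + 1/i$ and $\operatorname{dist}(K_i(x_i),K_i(y_i)) > \epsilon$, pass to a convergent subsequence $x_i \to x^*$, $y_i \to y^*$ (compactness of $S^3$), note $\operatorname{dist}(x^*,y^*) \leq \delta$, and conclude as before that $\operatorname{dist}(I^h(x^*), I^h(y^*)) \geq \epsilon$. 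This shows: any $t$ with $t \le \delta$ satisfies "$\operatorname{dist}(x,y)\le t \Rightarrow \operatorname{dist}(I^h(x),I^h(y))\le\epsilon$" (first part) and additionally that $\delta$ cannot be exceeded, since a bad pair for $I^h$ appears at distance $\le \delta$. Actually the cleanest formulation: the first paragraph shows $\delta$ is a good constant for $I^h$, hence $\delta \le \delta(\epsilon)$; the second shows there's a bad pair at distance $\le \delta$, so $\delta(\epsilon) < \delta$ is impossible unless — I must ensure the bad pair has distance $\le \delta(\epsilon)$ is contradicted — giving $\delta \ge \delta(\epsilon)$. Combining, $\delta = \delta(\epsilon)$.

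The parenthetical final remark ("passing to the subsequence was unnecessary") then follows formally: since every convergent subsequence of $\{\delta_i\}$ has the same limit $\delta(\epsilon)$, and $\{\delta_i\}$ is a bounded sequence in a compact interval, the whole sequence converges to $\delta(\epsilon)$.

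The main obstacle, and the only place demanding genuine care, is the $\delta \geq \delta(\epsilon)$ direction: the definition of $\delta_i$ as the \emph{largest} good constant does not immediately hand us a violating pair \emph{at} distance $\delta_i$, only arbitrarily close above it, so one must introduce the $1/i$ slack (or an explicit sequence of near-violating pairs) and then take a diagonal/subsequence limit, checking that the strict inequality $\operatorname{dist}(K_i(x_i),K_i(y_i)) > \epsilon$ survives the limit as the non-strict $\operatorname{dist}(I^h(x^*),I^h(y^*)) \geq \epsilon$ — which is exactly what is needed to contradict $\delta(\epsilon) > \delta$. A secondary point worth a line is verifying that for sufficiently small $\epsilon$ the suprema defining $\delta_i$ and $\delta(\epsilon)$ are positive and attained, so that all the "largest number such that" phrases are meaningful; this is where the hypothesis "sufficiently small $\epsilon$" is used.
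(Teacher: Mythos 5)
Your first direction is fine: perturbing a pair at distance $\le\delta$ to pairs at distance $\le\delta_i$ and passing to the limit via uniform convergence of $K_i\to I^h$ does show $\delta$ is a good constant for $I^h$ at scale $\epsilon$. The genuine gap is in the direction $\delta\ge\delta(\epsilon)$, exactly at the point you flag but do not resolve. Your near-violating pairs $(x_i,y_i)$ with $\operatorname{dist}(x_i,y_i)\le\delta_i+1/i$ and $\operatorname{dist}(K_i(x_i),K_i(y_i))>\epsilon$ only yield, in the limit, a pair with $\operatorname{dist}(x^*,y^*)\le\delta$ and $\operatorname{dist}(I^h(x^*),I^h(y^*))\ge\epsilon$. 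Contrary to your closing claim, this does \emph{not} contradict $\delta<\delta(\epsilon)$: the definition of $\delta(\epsilon)$ only forbids image distance strictly greater than $\epsilon$ at distances $\le\delta(\epsilon)$, and a pair realizing image distance exactly $\epsilon$ at distance $\le\delta$ is perfectly consistent with it. The scenario your argument cannot exclude is a plateau, i.e.\ $\max_{\operatorname{dist}(x,y)\le t}\operatorname{dist}(I^h(x),I^h(y))=\epsilon$ for all $t$ in an interval $[\delta,\delta(\epsilon)]$. This is not a cosmetic issue: your proof uses only continuity of the limit map plus uniform convergence, and for such data the statement is false. For instance, with $f$ a tent function of height exactly $\epsilon$ and base width $w<1$ on $[0,1]$, and $f_i$ the same tent of height $\epsilon+1/i$, one has $\delta_i\to w$ while the largest good constant for $f$ is $1$. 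So some use of the fact that $I^h$ is a homeomorphism, not merely continuous, is indispensable, and your proposal never invokes it.

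The paper closes precisely this hole. By invariance of domain ($I^h$ is an open map), the function $\epsilon_{I^h}(t):=\max_{\operatorname{dist}(x,y)\le t}\operatorname{dist}(I^h(x),I^h(y))$ is \emph{strictly} monotone (and continuous) for $\epsilon$ small, so no plateau at the value $\epsilon$ can occur. Concretely, if some $\delta'>\delta$ were a good constant for $I^h$, strict monotonicity gives $\epsilon_0<\epsilon$ with $\operatorname{dist}(x,y)\le\frac{\delta+\delta'}{2}\implies\operatorname{dist}(I^h(x),I^h(y))\le\epsilon_0$; uniform convergence then transfers this to the $K_i$ for large $i$ (with $\epsilon_0+2\eta_i<\epsilon$), forcing $\delta_i\ge\frac{\delta+\delta'}{2}$ and contradicting $\delta_i\to\delta$. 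Equivalently, in your formulation you need the statement that $\sup_{\operatorname{dist}(x,y)\le t}\operatorname{dist}(I^h(x),I^h(y))<\epsilon$ strictly for $t<\delta(\epsilon)$, which is what the openness argument provides. With that ingredient inserted, your scheme (and your concluding remark that the full sequence $\delta_i$ converges, no subsequence needed) goes through.
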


The lemma tells us that the moc's of the approximating smooth involutions $K_i$ cannot suddenly drop in the limit, but instead converge to moc$(I^h)$.

\begin{proof}
    Passing (repeatedly) to further subsequences, and again not changing the index notation, we may locate pairs $(x_i,y_i)$ converging to $(x,y)$ with $\operatorname{dist}(x_i,y_i) = \delta_i$ and $(K_i(x_i), K_i(y_i))$ converging to $(I^h(x), I^h(y))$ where $\operatorname{dist}(K_i(x_i), K_i(y_i)) = \epsilon$. Unless $x$ and $y$ are antipodes, we need only consider the case of $\operatorname{dist}(x,y)$ arbitrarily small. Invariance of domain under $I^h$ implies that when $\epsilon$ is made a function of $\delta$ (by choosing, globally, the smallest possible $\delta$), that $\epsilon_{I^h}(\delta)$ is \emph{strictly} monotone, and of course continuous. Thus $\delta_{I^h}(\epsilon)$ and $\delta^{-1}_{I^h}(\epsilon^{-1})$ are strictly monotone continuous functions.

    Now suppose there is a $\delta^\pr > \delta$ such that $\operatorname{dist}(x,y) \leq \delta^\pr \implies \operatorname{dist}(I^h(x),I^h(y)) \leq \epsilon$. Strict monotonicity of $\delta_{I^h}(\epsilon)$ implies that for some $\epsilon_0 < \epsilon$, $\operatorname{dist}(x,y) \leq \frac{\delta + \delta^\pr}{2} \implies \operatorname{dist}(I^h(x), I^h(y)) \leq \epsilon_0$. But then for sufficiently large $i$, $\operatorname{dist}(x_i,y_i) \leq \frac{\delta + \delta^\pr}{2} \implies \operatorname{dist}(K_i(x_i),K_i(y_i)) \leq \epsilon$. But this contradicts the convergence of $\delta_i$ to $\delta$, showing no such $\delta^\pr$ exists.
\end{proof}

Although we will continue to write $I$ or $I^h$ for the involution under consideration, the reader should implicitly replace it by some smooth $K_i$, for $i$ large as justified by the preceeding lemma. Similarly, any mention of the Bing Cantor Set BCS can, if preferred, be treated as a reference to its defining solid tori of sufficiently small diameters.

Instead of taking precise Euclidean (or spherical) diameters, we exploit a cruder notion, called \emph{Bing's parallel plane method}, based on how many times an object traverses \emph{fixed length} chambers between some fixed collection of parallel planes $\mathbb{P}$, or in our case, disks $\mathbb{P} \coloneqq \{P_k | k = 1, 2, \dots,$ $n\}$. To set this up, fix the coordinates on $T$, the outermost Bing solid torus, as $D^2 \times S^1$, $D^2 = \{\vec{x} \in \R^2, \lVert x \rVert \leq 1\}$ and $S^1 = \de D^2$. Let $P_k = D^2 \times e^{\frac{2\pi i k}{n}}$. (We often call the $P_k$ ``planes'' although they are disks and may refer to them as ``parallel'' although they are not precisely parallel. We do this to be consistent with Bing's notation. Also, the proof involves \emph{many} disks and calling this special class of them ``planes'' will reduce opportunities for confusion.) So, $\mathbb{P}$ depends on a positive integer $n$ (which we think of as large; in the proof, initially $\epsilon \approx \frac{1}{n}$). An object $X$ meeting more than one $P_i$ will have diameter, diam$(X) \geq O(\epsilon)$.

Our entire proof involves analyzing how the planes $P_k$ in some family $\mathbb{P}$ intersect the various solid tori $T_\sigma$, which are shrunk Bing tori, that is, the images of the standard, symmetrically positioned families of $2^i$ Bing tori under a homeomorphism of $T$ that distorts them violently to make them small. Each component of the general position intersection of a torus $T_\sigma$ with a plane $P_k$ will be a planar 2-manifold-with-boundary. In most of our Figures, these disks-with-holes are drawn as rather neat, roundish things; however, the challenges of the proof require us to deal with the possibility that they might resemble thin neighborhoods of wiggly trees with rather large diameters.  

Among those components of $T_\sigma \cap P_k$, we will be interested in only a special set of those that contain exactly one meridional curve of $T_\sigma$ in the intersection. So in this paper, we will reserve the term 'disk-with-holes' to refer to the following restricted collection of components of sBt with planes. 

\begin{defin}
	In this paper, \emph{disks-with-holes} (dwh) are connected components of the intersection of a solid torus $T$ and a ``plane'' $P \in \mathbb{P}$ with a unique meridional boundary component. In the context of sBt, dwh also have an inductive requirement: They must be contained within a dwh of their mother, inductively starting back to ancestor $T_\varnothing = S^1 \times D^2$ in which the dwh is a $D^2$ fiber (see Figure \ref{fig:inductive_dwh}). There are two kinds of dwh: \emph{first kind}, where the outermost boundary component is the unique meridian, and \emph{second kind}, where the outermost boundary component is trivial (see Figure \ref{fig:delta} to find an example where this strange possibility can occur). All dwh encountered in the proof of the theorem are first kind unless stated otherwise.
\end{defin}

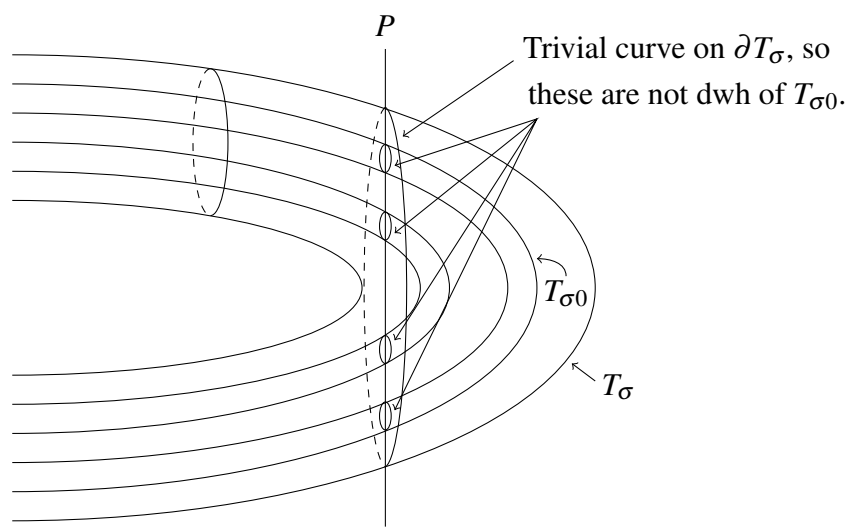
\begin{figure}[ht]
    \centering
    \begin{tikzpicture}[scale=1.55]
    \draw (0,1) arc (90:-90:3 and 0.75);
    \draw (0,1.25) arc (90:-90:3.5 and 1);
    \draw (0,1.5) arc (90:-90:3.75 and 1.25);
    \draw (0,1.75) arc (90:-90:4.25 and 1.5);
    \draw (0,2) arc (90:-90:4.5 and 1.75);
    \draw (0,2.25) arc (90:-90:5 and 2);
    
    \draw (1.7,2.13) arc (90:-90:0.15 and 0.63);
    \draw[dashed] (1.7,2.13) arc (90:270:0.15 and 0.63);
    
    \draw (3.2,2.3) -- (3.2,-1.8);
    \node at (3.2,2.5) {$P$};
    \draw (3.2,1.8) arc (85:-85:0.2 and 1.55);
    \draw[dashed] (3.2,1.8) arc (95:265:0.2 and 1.55);
    \draw (3.2,1.36) ellipse (0.05 and 0.12);
    \draw (3.2,0.78) ellipse (0.05 and 0.12);
    \draw (3.2,-0.28) ellipse (0.05 and 0.12);
    \draw (3.2,-0.85) ellipse (0.05 and 0.12);
    
    \node at (5.7,2.3) {Trivial curve on $\partial T_\sigma$, so};
    \draw[->] (4.3,2.2) -- (3.35,1.55);
    \node at (5.79,1.9) {these are not dwh of $T_{\sigma 0}$.};
    \draw[<->] (3.28,1.3) -- (4.5,1.7) -- (3.28,0.72);
    \draw[<->] (3.28,-0.2) -- (4.5,1.7) -- (3.28,-0.8);
    
    \node at (4.75,0.2) {$T_{\sigma 0}$};
    \draw[->] (4.75,0.35) to[out=90,in=0] (4.5,0.5);
    \node at (5.2,-0.6) {$T_\sigma$};
    \draw[->] (5,-0.55) -- (4.8,-0.4);
    
    \node at (-0.9,0) {$\hphantom{M}$};
\end{tikzpicture}
    \caption{Inductive feature of dwh.}\label{fig:inductive_dwh}
\end{figure}

\begin{figure}[ht]
	\centering
	\input{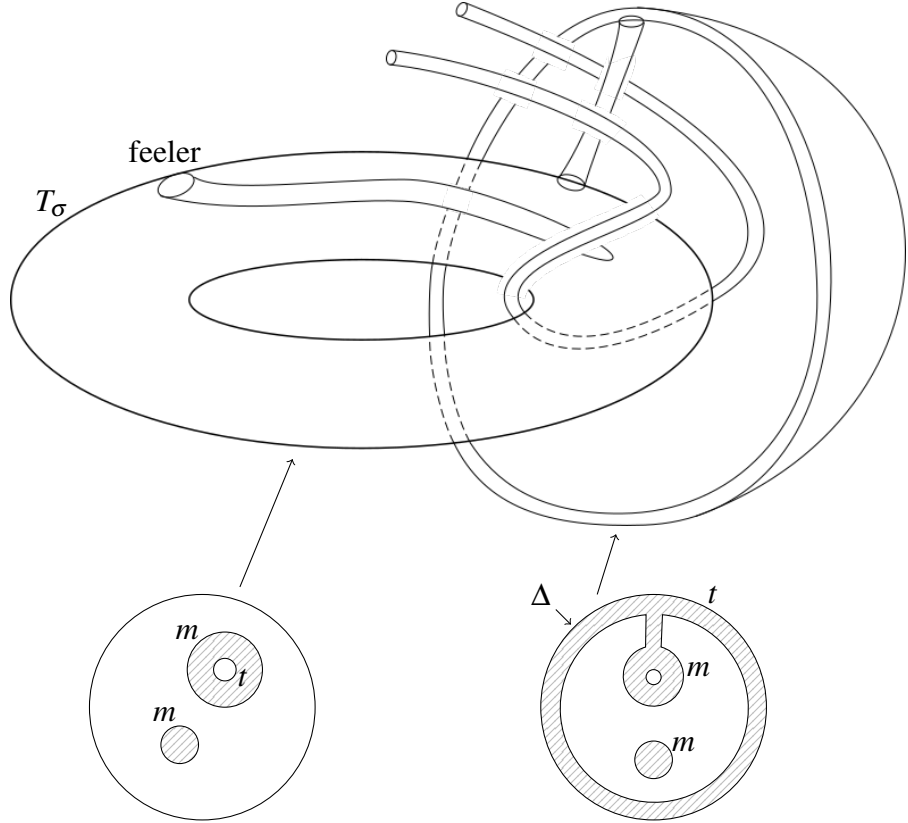}
	\caption{$T_\sigma$ with two planar cross-sections shown. On the left are dwh of the first kind. But, on the right, $\Delta$ is of the second kind. rdwh are always first kind.}\label{fig:delta}
\end{figure}

Each 'hole' of a dwh is a curve that is trivial on the boundary of its torus. In Bing's shrinks, such trivial holes never arise, but in an analysis of modulus of continuity, they must be considered. When dealing with exclusively topological issues, we can simply remove those trivial curves from consideration by the topological method of pulling back feelers, which involves making homeomorphic adjustments. Unfortunately, performing such homeomorphisms would cause us to lose control of modulus of continuity estimates. Nevertheless, for the future purpose of computing area estimates near the end of our proof, it is necessary to identify those dwh that would remain \emph{if} we did remove trivial curves by a homeomorphism, even though we do not actually perform such homeomorphisms. The following discussion and definition identify those special dwh.

\emph{Residual disks-with-holes (rdwh)} is a key concept. It will be defined for a finite family of parallel ``planes'' $\lbar {\mathbb {P}}$ and some initial subtree, $\lbar{T}$, of the sBt. $\lbar{\mathbb {P}}$ might be the $\{D^2 \times e^{2\pi ik/n} | 1 \leq k \leq n\} \subset T_\varnothing$ and $\lbar{T}$ might be the $\leq n$- stage sBt, but we will apply the notion more generally. So $\lbar{T}$ is a collection of solid tori $T_\sigma$, each $T_\sigma$ a sBt. The definition is based on the picture of each intersection of a plane $P_k \in \lbar{\mathbb {P}}$ with each torus $T_\sigma \in \lbar{T}$ \emph{after} a canonical isotopy $\mathcal{I}$ of $\lbar{T}$ repositions $\lbar{T}$ so that each intersection of a plane $P_k \in \lbar{\mathbb {P}}$ with each torus $T_\sigma \in \lbar{T}$ contains no trivial (in $\de {T_\sigma}$) scc $\gamma$. 

Notice that $\lbar{T}$ is a collection of tori, some of which are subsets of others. When we refer to $\de \lbar{T}$ in what follows, we mean the set of all boundaries of tori in $\lbar{T}$, and a scc being trivial in $\de \lbar{T}$ means it is trivial on the boundary of the torus on which it lies. 

$\mathcal{I}$ may be described as follows: Consider all trivial on $\de \lbar{T}$ scc $\{\gamma\}_t \subset \{\gamma\} = \lbar{\mathbb {P}} \cap \de \lbar{T}$. Within $\{\gamma\}_t$ select a $\lbar{\mathbb {P}}$-innermost $\gamma$: $\gamma$ bounds disks $D_\gamma$ and $E_\gamma$ on $\lbar{\mathbb {P}}$ and $\de \lbar{T}$, respectively. Ambiently isotope $E_\gamma$ across the ball $B_\gamma \subset T_\varnothing$ bounded by $D_\gamma \cup E_\gamma$ to remove $\gamma$ from $\{\gamma\}_t$, and likely removing many other scc as well. Continue inductively until $\{\gamma\}_t = \varnothing$. The composition is $\mathcal{I}$. The possible balls $B_\gamma$ are all either pairwise disjoint or nested, so the order of selection of innermost $\gamma \in \{\gamma\}_t$ is immaterial. In fact, the entire isotopy (by \cite{hatcher83}) is canonical in that it is determined from a contractible space of choices. We do not give details here, since our logic does not require that $\mathcal{I}$ is canonically defined, only the fairly obvious property that if $\lbar{T} \subset \lbar{\lbar{T}}$ is extended to a larger tree, e.g.\ by adding the daughters of its leaves, then the isotopy $\mathcal{I}(\lbar{\mathbb {P}},\lbar{\lbar{T}})$ may be taken to agree with $\mathcal{I}(\lbar{\mathbb {P}},\lbar{T})$ outside a neighborhood of the new sBt, $\lbar{\lbar{T}} \setminus \lbar{T}$. Similarly, if additional parallel planes are added to $\lbar{\mathbb{P}}$ to make $\lbar{\lbar{\mathbb {P}}}$, the isotopy $\mathcal{I}(\lbar{\lbar{\mathbb {P}}},\lbar{\lbar{T}})$ will yield the same intersection pattern with $\lbar{\mathbb{P}}$ as the isotopy $\mathcal{I}(\lbar{\mathbb {P}}, \lbar{\lbar{T}})$. $\mathcal{I}$ removes all trivial curves from $\{\gamma\}$ (leaving only meridians and possibly longitudes; see Figure \ref{fig:meridional_disk}). The curves that remain have not moved. Among these, consider only the meridians $\{\gamma\}_m$. Each such meridian, which is innermost w.r.t.\ meridians of its torus $\de T_\sigma$, coincides with an outer boundary of some dwh $\Delta$ of $\lbar{\mathbb {P}} \cap \lbar{T}$, prior to applying $\mathcal{I}$.

\begin{defin}\label{def:rdwh}
	Such $\Delta$ are called \emph{residual disks-with-holes} rdwh. By the remark below, they are disks with holes of the first kind: the outer boundary is a meridian. $\Delta^+$ is defined to be the disk in $\lbar{\mathbb {P}}$ bounding $\de_m\Delta$, the meridional component of the boundary, i.e.\ $\Delta$ with trivial holes filled. Finally, define $\operatorname{area}^+(\Delta) \coloneqq \operatorname{area}(\Delta^+)$. Note that $\Delta \subset T_\sigma$ does not imply $\Delta^+ \subset T_\sigma$.
\end{defin}

\begin{note}
	The isotopy $\mathcal{I}$ is useful because it allows us to focus combinatorially on only residual dwh and avoid an area overcounting issue at the end of the proof. It is tempting to assign $\mathcal{I}$ a greater role and analyze the modulus of continuity not of $I$, but the simplified $I^{\mathcal{I}_1} \coloneqq \mathcal{I}_1^{-1} I \mathcal{I}_1$, but we do not know how to carry modulus estimates across this conjugation.
\end{note}

\begin{remark}
	rdwh are first kind. To see this, observe (Figure \ref{fig:delta}) that for a second kind dwh $\Delta$, the \emph{inward} normal from the meridional boundary component $\de_m \Delta$ must point \emph{outward} toward infinity in the plane $P$ containing $\Delta$. On the other hand, after the isotopy $\mathcal{I}$, a meridional scc $m$ of intersection $m \subset \de T \cap P$, if innermost among meridians of $\de T$, must, by linking number, have the \emph{inward} direction into $T$ align with the \emph{inward} direction in $P$. Thus, the rdwh $\Delta \subset T$ bounded by $m$ has agreement between inward into $T$ and inward w.r.t.\ $P$, showing that it is first kind.
\end{remark}

\begin{defin}
	For a large even integer $n$ let $\mathbb{P} = \{D^2 \times e^{2\pi i k/n}, 1 \leq k \leq n\}$. With this plane family fixed, we say $T_\varnothing$ has \emph{length} $n$. The \emph{length} $\ell(T_0)$ and $\ell(T_1)$ of its daughters is defined, say for $T_0$, by looking at all dwh of $\mathbb {P} \cap T_0$ and giving each dwh an index $k$ indicating the plane $P_k \in \mathbb{P}$ in which it lies; then $\ell(T_0) \coloneqq$ the number of transitions between distinct indices. Similarly for $T_\sigma$, $\ell(T_\sigma)$ is defined by taking the dwh of $T_\sigma \cap \mathbb{P}$, indexing each again according to $P_k \in \mathbb{P}$, and counting index transitions.
\end{defin}

\begin{defin}
	\emph{Residual length} $r\ell(T_\sigma)$ is defined by now taking \emph{only} the rdwh in $T_\sigma$, again indexing each by its plane $P_i \in \mathbb{P}$. $r\ell(T_\sigma) =$ number of rdwh index transitions. From the discussion of $\mathcal{I}$ above Definition \ref{def:rdwh}, $r\ell(T_\sigma)$ is unchanged if we add descendents of $T_\sigma$ to the sBt $\lbar{T}$ under consideration.
\end{defin}

In this proof we speak of certain $T_\sigma$ \emph{meeting} certain planes $P$, initially $P \in \mathbb{P}$, but we later add additional planes. When we say ``does not meet'' or ``no essential meeting'' with $P$, we mean \emph{does not meet in any rdwh}; these are all that count. Usually, in figures, only residual dwh are indicated. Intuitatively, $r\ell$ edits out some of the length a given torus spends \emph{apparently} (not topologically) inside another non-ancestor torus as viewed in its projection to $S^1$. $r\ell$ allows us to focus on the important structure without obfuscating detail.

\begin{note}
	Both $\ell$ and $r\ell$ are even integers and $r\ell(T_\sigma) \leq \ell(T_\sigma$).
\end{note}

It is an observation of Bing's that:
\[
	2\ell(T_\sigma) - 4 \leq \ell(T_{\sigma 0}) + \ell(T_{\sigma 1}).
\]
Our version of this observation involves residual dwhs.

\begin{lemma}\label{lm:el_count}
	$2r\ell(T_\sigma) - 4 \leq r\ell(T_{\sigma 0}) + r\ell(T_{\sigma 1})$.
\end{lemma}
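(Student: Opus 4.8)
The plan is to run Bing's classical count for $\ell$ in the residual setting: the combinatorial skeleton is his, and the genuinely new work is checking that passing from dwh to \emph{residual} dwh does not leak away any index transitions. So throughout I would think of $I$ (really some smooth $K_i$) as fixed and of the planes $\mathbb{P}$ as fixed, and work only with the intrinsic topology of the Bing pair.

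First I would record the only feature of a Bing pair that enters the argument, which is intrinsic to the pair $(T_\sigma;\,T_{\sigma 0}\cup T_{\sigma 1})$ and hence immune to the wild partial shrink $g$ even though $g$ moves everything erratically relative to $\mathbb{P}$. Namely (Figure~\ref{fig:finite_approx}(a), and the disks $E_{\sigma 0},E_{\sigma 1}$ of Figure~\ref{fig:finite_approx}(c)): the core circle of $T_\sigma$ is a union $J_0\cup J_1$ of two closed arcs meeting at two points, and after an isotopy \emph{inside} $T_\sigma$ each $T_{\sigma i}$ is a thin regular neighborhood of a curve that runs out along $J_i$, clasps $T_{\sigma(1-i)}$, runs back along a parallel copy of $J_i$, and clasps $T_{\sigma(1-i)}$ a second time; consequently $T_{\sigma i}$ meets every meridian disk of $T_\sigma$ lying over the interior of $J_i$ in two points and meets no meridian disk over $J_{1-i}$. (Any other mutual position of the clasps only makes the daughters wander \emph{more}, which only improves the inequality, so I may assume this ``complementary arc'' picture.)

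Then the count goes as follows. I would read the rdwh of $T_\sigma$ cyclically around the core of $T_\sigma$, each tagged with its plane index, so that $r\ell(T_\sigma)$ is the number of index changes in this cyclic word; letting $t_i$ be the number of index changes contributed by the stretch lying over $J_i$, the only remaining changes occur at the two junctions of $J_0$ with $J_1$, so
\[
 r\ell(T_\sigma)\ \le\ t_0+t_1+2 .
\]
On the other side, because $T_{\sigma i}$ doubles $J_i$, the cyclic word of rdwh of $T_{\sigma i}$ runs the stretch over $J_i$ forward and then backward, so it exhibits each of those $t_i$ index changes at least twice (the turnaround at the far end of $J_i$ and the closure near the clasp each merely repeat an index), giving $r\ell(T_{\sigma i})\ge 2t_i$. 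Summing over $i$ and combining,
\[
 r\ell(T_{\sigma 0})+r\ell(T_{\sigma 1})\ \ge\ 2(t_0+t_1)\ \ge\ 2\,r\ell(T_\sigma)-4 ,
\]
which is the lemma; the ``$-4$'' is precisely the cost of the two junction transitions of $T_\sigma$, each of which ``should'' have been seen twice and is seen zero times by the daughters.

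The one step with real content, and what I expect to be the main obstacle, is $r\ell(T_{\sigma i})\ge 2t_i$, where the danger lies entirely in the word \emph{residual}. Two things must be checked. The first is routine: every rdwh of $T_{\sigma i}$ lies inside a dwh of $T_\sigma$ of the same plane index, so the two cyclic words can be compared at all — this is just the inductive clause of the definition of dwh. The delicate one is that wherever $T_\sigma$ has an rdwh over $J_i$ in a plane $P_k$, the daughter $T_{\sigma i}$ — which must puncture the corresponding meridian disk $\Delta^+$ of $T_\sigma$ twice — is forced to carry a genuine \emph{residual} dwh there, in that same plane, and not merely trivial holes that the canonical isotopy $\mathcal{I}$ of Definition~\ref{def:rdwh} would absorb. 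This is exactly the spot where one cannot ``pull back feelers'' by a homeomorphism (the warning in the Note after Definition~\ref{def:rdwh}): instead the argument must locate the daughter's two punctures of $\Delta^+$ inside the residual component $\Delta$ itself, using that $\Delta$ is the part of $P_k$ that $T_\sigma$ essentially occupies over $J_i$, together with the first‑kind/linking‑number analysis of the Remark preceding Definition~\ref{def:rdwh}. Granting this, the out‑and‑back structure of a doubled arc supplies the factor $2$, and the lemma follows.
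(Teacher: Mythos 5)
Your count has the same skeleton as the paper's (each mother disk forces two daughter disks, and at most four index transitions are lost in the conversion), but the step you dispose of in a parenthesis is the whole content of the lemma. You isotope the daughters inside $T_\sigma$ into the standard ``complementary arc'' position and justify this with the claim that any other position ``only makes the daughters wander more.'' No such monotonicity is available: $r\ell(T_{\sigma 0})$ and $r\ell(T_{\sigma 1})$ are computed from the actual intersections of the shrunk daughters with the fixed plane family $\mathbb{P}$, and an isotopy inside $T_\sigma$ changes those intersections. The assertion that the standard doubled-core position minimizes the daughters' residual lengths over their isotopy class is essentially the inequality being proved, so assuming it is circular --- and it is exactly the kind of ``surely a homeomorphism cannot help'' intuition the paper is at pains not to rely on. Moreover, the partition of the mother's rdwh into those ``over $J_0$'' and ``over $J_1$,'' and the claim that $T_{\sigma i}$ punctures each disk over $J_i$ twice and none over $J_{1-i}$, only make sense in that normalized model; for the actual configuration the mother's rdwh are not fibers of a product structure adapted to the daughters, and which daughter a given capped disk $\Delta^+$ must meet is not determined in advance.

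The paper's proof never repositions the daughters. After the canonical isotopy $\mathcal{I}$ (extended to the daughters), which by construction does not change residual counts, it uses that $T_\sigma \setminus (T_{\sigma 0}\cup T_{\sigma 1})$ is the nonsplit Borromean link complement: every capped dwh of $T_\sigma$ must meet the core circle of $T_{\sigma 0}$ or of $T_{\sigma 1}$, and since the algebraic intersection number is zero, such meetings come in $\pm$ pairs, forcing at least two components of the intersection with one daughter that are essential in $H_2(T_{\sigma i},\de T_{\sigma i};\Z)$ --- i.e.\ two daughter dwh, and these are residual because the count is taken after $\mathcal{I}$ --- wherever the daughters happen to lie. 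The $-4$ then comes from the possible loss of one transition at each of the two turning points of each daughter. This homological/linking argument is precisely what you would need in place of your normalization, and it also settles the point you explicitly defer (``Granting this\ldots''): the residual bookkeeping is absorbed by performing $\mathcal{I}$ first, not by locating punctures inside $\Delta$ by hand. As written, your proposal establishes the inequality only for a specially positioned model of the Bing pair, not for the arbitrary partially shrunk configuration to which Lemma \ref{lm:el_count} must apply.
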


\begin{proof}
	Perform the canonical isotopy $\mathcal{I}$ so that $T_\sigma$ meets $\mathbb{P}$ only in essential curves, then extend the isotopy to $\lbar{\mathcal{I}}$, as discussed, so the daughters $T_{\sigma 0}$ and $T_{\sigma 1}$ also have only essential intersection. Now the Bing observation above gives the lower bound to the number of planar meridional disks of the daughters. But, the boundaries of these disks are precisely the meridional boundaries of the daughters rdwh.

	For completeness Bing's argument is: $T_\sigma \setminus (T_{\sigma 0} \cup T_{\sigma 1})$ is the Borromean link complement. Since this link is nonsplit, each meridional disk $\Delta$ of $T_\sigma$, and in fact each dwh (just cap the holes on $\de T_\sigma$) must meet the core circle of either $T_{\sigma 0}$ or $T_{\sigma 1}$. By linking number, any such meeting must come in $\pm$ pairs. This implies that $\Delta \cap T_{\sigma 0}$ or $\Delta \cap T_{\sigma 1}$ must contain two components essential in $H_2(T_{\sigma i}; \de T_{\sigma i}; \Z)$ ($i=0$ or $1$), i.e.\ two dwh. So, passing to daughters at least doubles the total number of dwh, but since we are counting index transition, one can be lost at each turning point (the min/max of a lift of the daughters to the infinite cyclic cover $\tld{T}_\sigma$) of each daughter, accounting for the -4.
\end{proof}

\emph{Orientation}. Fixing the orientation on $D^2$ (the factor disk of $T_\varnothing$), and choosing arbitrary orientations on each $T_\sigma$, each of the dwh of $T_\sigma$ itself becomes oriented, and, as a set, the dwh are cyclically ordered in $T_\sigma$. 

\begin{defin}
	A Bing $T_\sigma$ is called a final $k$, $k_f$, if $r\ell(T_\sigma) = k$ and all descendants of $T_\sigma$ have smaller $r\ell$ values. In particular, $r\ell(T_{\sigma 0}) = k-2 = r\ell(T_{\sigma 1})$. Note that $k$ is necessarily even.
\end{defin}

\begin{lemma}\label{lm:final_k}
	For $\mathbb{P}_{n+1}$, having $n+1$ planes, and $0 \leq 2k \leq 2n$, there are at least $2^{(n-k)}$ $2k_f$'s.
\end{lemma}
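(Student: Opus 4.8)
The plan is to produce, by induction on $j = 0,1,\dots,n$, a family $\mathcal{C}_j$ of Bing solid tori with $|\mathcal{C}_j| \geq 2^j$ such that the tori in $\mathcal{C}_j$ are pairwise \emph{incomparable} in the sBt (none a descendant of another) and each is $2(n-j)$-final; the Lemma is then the case $j = n-k$. The ingredients are: (i) that for the family $\mathbb{P}_{n+1}$ the root realizes the largest value of $r\ell$, with $r\ell(T_\varnothing)=2n$ (the precise normalization is immaterial — all that matters is that some torus attains the top value $2n$ and that $r\ell\leq 2n$ throughout the sBt); (ii) that $r\ell$ is even-valued (the Note preceding Lemma \ref{lm:el_count}); (iii) Lemma \ref{lm:el_count}, $r\ell(T_{\sigma 0}) + r\ell(T_{\sigma 1}) \geq 2\,r\ell(T_\sigma) - 4$; and (iv) finiteness of the sBt in the finitary partial-shrink setup. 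For the base case $j=0$ I would take the set of tori with $r\ell = 2n$ — nonempty since it contains $T_\varnothing$, and finite — and let $T_\tau$ be one of maximal depth in it; every proper descendant of $T_\tau$ has $r\ell\leq 2n$ by global maximality and $r\ell\neq 2n$ by the maximal depth of $T_\tau$, hence $r\ell<2n$, so $T_\tau$ is $2n$-final and $\mathcal{C}_0 := \{T_\tau\}$ works.

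For the inductive step, assume $\mathcal{C}_j$ is built, $j<n$, and fix $T_\sigma \in \mathcal{C}_j$. Since $T_\sigma$ is $2(n-j)$-final, all of its proper descendants have $r\ell < 2(n-j)$, so parity forces $r\ell(T_{\sigma i})\leq 2(n-j-1)$ for $i=0,1$; feeding this into Lemma \ref{lm:el_count} forces equality, $r\ell(T_{\sigma 0}) = r\ell(T_{\sigma 1}) = 2(n-j-1)$. Now work inside the subtree $\mathcal{T}_{\sigma i}$ of the sBt rooted at $T_{\sigma i}$: the set of its tori with $r\ell = 2(n-j-1)$ is nonempty (it contains $T_{\sigma i}$) and finite, so I pick a member $T_\sigma^{(i)}$ of maximal depth in that set. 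Every proper descendant of $T_\sigma^{(i)}$ lies in $\mathcal{T}_{\sigma i}$, hence is a proper descendant of the $2(n-j)$-final torus $T_\sigma$, hence has $r\ell\leq 2(n-j-1)$ by parity, and $r\ell\neq 2(n-j-1)$ by maximality of $T_\sigma^{(i)}$; so $T_\sigma^{(i)}$ is $2(n-j-1)$-final. (When $n-j-1=0$ this just records that $T_\sigma^{(i)}$ is a leaf of the sBt with $r\ell=0$.) Set $\mathcal{C}_{j+1} := \{\,T_\sigma^{(i)} : T_\sigma \in \mathcal{C}_j,\ i\in\{0,1\}\,\}$.

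It remains to see $|\mathcal{C}_{j+1}|\geq 2^{j+1}$ and that $\mathcal{C}_{j+1}$ stays pairwise incomparable. The subtrees $\mathcal{T}_{\sigma i}$ entering the construction are pairwise disjoint: the two daughters of a fixed $T_\sigma$ span disjoint subtrees, and for distinct $T_\sigma, T_{\sigma'} \in \mathcal{C}_j$ the subtrees $\mathcal{T}_\sigma, \mathcal{T}_{\sigma'}$ are disjoint because $T_\sigma, T_{\sigma'}$ are incomparable. Since each $T_\sigma^{(i)}$ lies in its own $\mathcal{T}_{\sigma i}$, distinct pairs $(T_\sigma,i)$ yield distinct — indeed incomparable — tori, so $|\mathcal{C}_{j+1}| = 2|\mathcal{C}_j| \geq 2^{j+1}$ and $\mathcal{C}_{j+1}$ is pairwise incomparable. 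This completes the induction, and taking $j = n-k$ (legitimate for $0\leq k\leq n$) gives at least $2^{n-k}$ tori, each a $2k_f$.

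I expect the only genuinely non-combinatorial point to be input (i): that the root realizes the maximal value $r\ell=2n$ for $\mathbb{P}_{n+1}$. This is where the canonical isotopy $\mathcal{I}$ earns its keep — it arranges that the residual disks-with-holes of a daughter nest, one per index, inside those of its mother in such a way that passing to descendants cannot accumulate index transitions, so $r\ell$ cannot exceed its value at the root. On the combinatorial side the one thing needing real care is that the $2^{n-k}$ tori produced are honestly distinct; the ``pairwise incomparable'' clause threaded through the induction is precisely the device that makes this automatic, and Lemma \ref{lm:el_count} together with parity supplies everything else.
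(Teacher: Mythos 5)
Your inductive step is sound, but the base case rests on a claim that is false, and you have located the error precisely where you thought the argument was safe. You assume (input (i)) that $r\ell \leq 2n$ throughout the sBt, crediting the canonical isotopy $\mathcal{I}$ with preventing descendants from ``accumulating index transitions.'' The isotopy does no such thing: it only removes curves of $\lbar{\mathbb{P}} \cap \de\lbar{T}$ that are \emph{trivial} on the torus boundaries, and it leaves every essential (meridional) crossing in place. A descendant torus is the image of a standard Bing torus under an arbitrary homeomorphism of $T_\varnothing$, so it can wander back and forth across the planes far more often than its mother; indeed Lemma \ref{lm:el_count} shows the \emph{total} residual length of the daughters is at least $2r\ell(T_\sigma)-4$, so residual lengths typically grow down the tree, and nothing prevents a single daughter from having $r\ell$ much larger than $2n$. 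Consequently a deepest torus among those with $r\ell = 2n$ need not be $2n$-final (its descendants may have $r\ell > 2n$), and your family $\mathcal{C}_0$ may fail to exist as constructed. Note that once you do have a $2(n-j)$-final torus, your step is fine: there the needed upper bound on descendants' $r\ell$ comes from the \emph{finality of the ancestor}, not from any global bound, which is why the rest of your induction goes through (for $k\geq 1$, where finiteness of the set of tori with the given positive $r\ell$ value is available in the finitary setup).

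The gap is repairable with the same tool you already use. Lemma \ref{lm:el_count} plus evenness gives the dichotomy: at any fork, either both daughters have $r\ell$ exactly two less than the mother, or some daughter has $r\ell \geq r\ell(\text{mother})$. So take a deepest element $T'$ of the finite, nonempty set $\{T_\sigma : r\ell(T_\sigma)\geq 2n\}$; if $r\ell(T')\geq 2n+2$ the dichotomy would hand some daughter of $T'$ residual length $\geq 2n$, contradicting maximal depth, so $r\ell(T')=2n$ exactly and $T'$ is $2n$-final, which restores your $\mathcal{C}_0$. The paper's own proof avoids the issue differently: it runs the induction on the hypothesis ``$r\ell(T_\sigma)\geq 2j$'' rather than on finality, using the dichotomy directly --- when both daughters drop by two the count doubles, and when a daughter has $r\ell \geq r\ell(T_\sigma)$ one passes to that daughter and discards the sibling, the recursion terminating because $r\ell$ vanishes at deep stages. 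That formulation absorbs the possibility of growing $r\ell$ at every level, whereas yours must confront it once, at the top.
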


\begin{proof}
	By induction on $n-k$. The proof is by comparison to the lengths induced by Bing's 1952 shrink: at each fork, lengths decrease by two. By Lemma \ref{lm:el_count}, at any fork, either the daughter lengths are both two less than the mother's, or at least one daughter has a length greater than or equal to the mother's. In the second case, choose such a large length sibling and simply discard the other sibling's tree branch. The retained branch must have as many $k_f$'s in it as the original $T_\sigma$ has among its descendents.
\end{proof}

\begin{figure}[ht]
	\centering
	\input{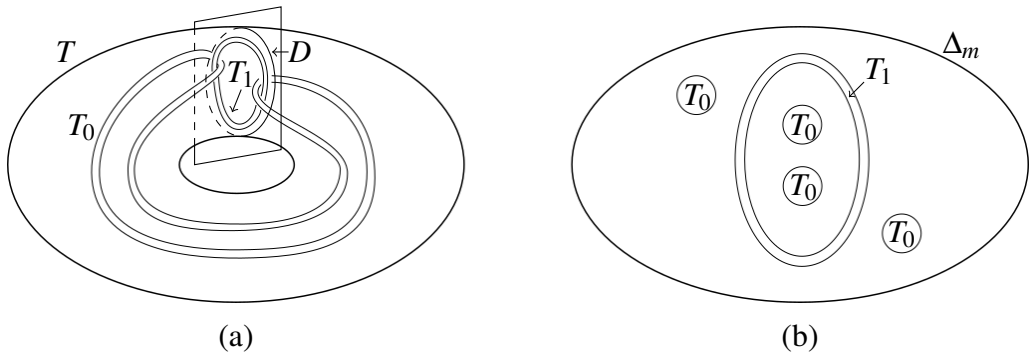}
	\caption{A meridional disk $\Delta_m$ of $T$ having intersection with $T_0$ and $T_1$ where $T_0$ has four (residual) meridional disks and $T_1$ has none.}\label{fig:meridional_disk}
\end{figure}

An important role is played by the exponentially many final $n$'s $\{n$-$f\}$ produced by Lemma \ref{lm:final_k} (in the case $n$ is even). To support intuition for the upcoming argument, we draw two distinct cases of 4-$f$'s in Figure \ref{fig:planes_alt}. Simple combinatorics tell us that a 4-$f$ must meet 2 or 3 parallel planes. Up to irrelevant \emph{wiggles}, the two basic pictures are Figure \ref{fig:planes}(a) and \ref{fig:planes}(b).

\begin{figure}[ht]
	\centering
	\input{Inserts/Fig2.5.tex}
	\caption{}\label{fig:planes}
\end{figure}

``Wiggles,'' as in Figure \ref{fig:planes_alt}, mean the introduction of additional dwh, or even rdwh, without increasing the number of plane-index transitions which determine $\ell$ and $r\ell$.

\begin{figure}[ht]
	\centering
	\input{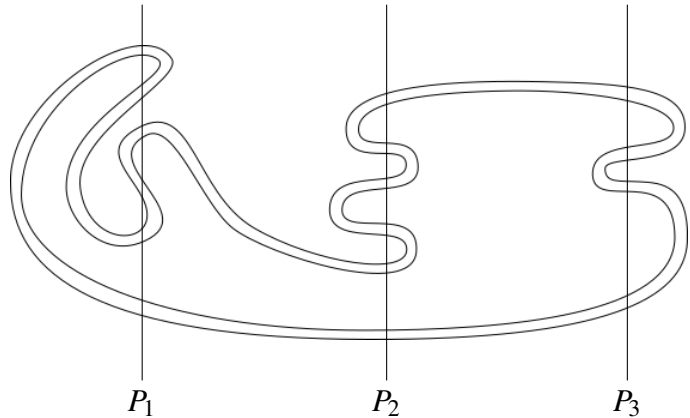}
	\caption{The picture above is not importantly different from $T_\sigma$ in Figure \ref{fig:planes}(a), both have $r\ell = 4$.}\label{fig:planes_alt}
\end{figure}

Next we come to the Bag and long arc lemmas. Both can produce ``$(a,d)$-\emph{stretching},'' defined as finding two points $x,y$; $x \in \mathrm{FIX}$, so $I(x) = x$, and $y$ with $\operatorname{dist}(x,y) \leq \sqrt{\frac{a}{\pi}}$ and $\operatorname{dist}(I(x),I(y)) > d$. ``$a$'' stands for planar area and $\sqrt{\frac{a}{\pi}}$ is a corresponding length scale produced by the isoparametric inequality, that is, if $D$ is any planar disk of area less than $a$, then any point in $D$ is within distance $\sqrt{\frac{a}{\pi}}$ of a point on $\de D$. Our lower bound on modulus will be obtained by describing 4 circumstances under which we can deduce $(a,d)$-stretching, and proving that at least one must occur. The first two are consequences of Lemmas \ref{lm:bag} and \ref{lm:longarc}, the next two are in Propositions \ref{prop:final_4} and \ref{prop:final_4}.

\begin{lemma}[Bag Lemma]\label{lm:bag}
	Let $I$ be a smooth orientation-reversing involution of $\R^3$ (it will be smoothly conjugate to $\tld{I}(x,y,z) = (-x,y,z)$, the standard orientation-reversing involution). Let $P$ and $P^\pr$ be parallel planes perpendicular to the $x$-axis in $\R^3$, distance $d$ apart. Assume $\Delta \subset P$ is a disk with area$(\Delta) \leq a$. Assume there is a scc $J \subset \operatorname{FIX}(I) \cap \Delta$ and that the subdisk of $\mathrm{FIX} \coloneqq \operatorname{FIX}(I)$ bounded by $J$, $D_J$, meets $P^\pr$, then there exist $x \in J$ and $y \in \Delta$ such that $\operatorname{dist}(x,y) \leq \sqrt{\frac{a}{\pi}}$ and $\operatorname{dist}(I(x),I(y)) \geq d$.
\end{lemma}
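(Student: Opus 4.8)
The plan is to read off the pair $(x,y)$ from the geometry of the doubled disk $\Delta_J\cup I(\Delta_J)$, where $\Delta_J\subseteq\Delta$ is the subdisk of $\Delta$ bounded by $J$. Normalize coordinates so that $P=\{\xi=0\}$ and $P^\pr=\{\xi=d\}$ with $d>0$, where $\xi\colon\R^3\to\R$ is the projection onto the $x$-axis (if instead $P^\pr=\{\xi=-d\}$, everything below goes through with minima in place of maxima). By the parenthetical hypothesis, $\mathrm{FIX}:=\mathrm{FIX}(I)$ is a smoothly embedded plane separating $\R^3$ into open half-spaces $U_+,U_-$ that $I$ interchanges. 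First I would reduce, by a standard innermost-disk argument, to the case in which $J$ is innermost in $\Delta$, i.e.\ $\operatorname{int}(\Delta_J)\cap\mathrm{FIX}=\varnothing$ — replacing $J$ if necessary by an innermost scc of $\mathrm{FIX}\cap\Delta$ that still bounds a subdisk of $\mathrm{FIX}$ meeting $P^\pr$ (this only shrinks $\operatorname{area}(\Delta_J)$) — and to general position.

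Next I would build a ball. Since $\operatorname{int}(\Delta_J)$ is connected and misses $\mathrm{FIX}$, it lies in one side, say $U_+$, so $\operatorname{int}I(\Delta_J)\subseteq I(U_+)=U_-$; hence $\Sigma:=\Delta_J\cup I(\Delta_J)$ is an embedded $2$-sphere, its two open faces lying in the disjoint sets $U_+,U_-$ and meeting $\mathrm{FIX}$ only along $J$. By the Schoenflies theorem $\Sigma$ bounds a $3$-ball $B$, and since $I(\Sigma)=\Sigma$ while $I(B)$ is compact, $I(B)=B$. The set $\mathrm{FIX}\cap B$ is a compact subsurface of the plane $\mathrm{FIX}$ whose only boundary circle is $\mathrm{FIX}\cap\Sigma=J$; it is nonempty because $\operatorname{int}B$ is connected and contains points of both $U_+$ and $U_-$ (near the two faces of $\Sigma$), so it must meet $\mathrm{FIX}$. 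A compact planar subsurface with one boundary circle is a disk, and the only disk in $\mathrm{FIX}$ bounded by $J$ is $D_J$; hence $D_J\subseteq B$.

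The conclusion is now a short computation. Pick $z\in D_J\cap P^\pr$; then $z\in B$ and $\xi(z)=d$, so $\max_{p\in B}\xi(p)\ge d$. Being a nonconstant affine function, $\xi$ attains its maximum on $B$ along $\partial B=\Delta_J\cup I(\Delta_J)$; but $\xi\equiv 0$ on $\Delta_J\subseteq\{\xi=0\}$, so the maximum is attained at some $q\in I(\Delta_J)$ with $\xi(q)\ge d$. Write $q=I(y)$ with $y\in\Delta_J\subseteq\Delta$. By the isoperimetric bound quoted in the text, since $\operatorname{area}(\Delta_J)\le\operatorname{area}(\Delta)\le a$ there is a point $x\in\partial\Delta_J=J$ with $\operatorname{dist}(x,y)\le\sqrt{\operatorname{area}(\Delta_J)/\pi}\le\sqrt{a/\pi}$. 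Since $x\in J\subseteq\mathrm{FIX}$ we have $I(x)=x$, whence $\operatorname{dist}(I(x),I(y))=\operatorname{dist}(x,q)\ge|\xi(x)-\xi(q)|=\xi(q)\ge d$, as required.

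The step I expect to be the real obstacle is the construction of $B$ on the correct side: the separating property of $\mathrm{FIX}$ is needed both to see that $\Sigma$ is embedded and to identify $\mathrm{FIX}\cap B$ with $D_J$, and care is needed in the reduction to check that an innermost curve still bounds a $\mathrm{FIX}$-disk reaching $P^\pr$. The remaining ingredients — Schoenflies, the identity $I(B)=B$, the affine maximum principle on $B$, and the isoperimetric inequality — are routine.
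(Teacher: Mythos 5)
Your argument for the \emph{innermost} case is correct and is essentially the ``easiest case to envision'' in the paper's own proof (there $R \cup I(R)$ is a ball and one argues with a ray/maximum in the $x$-direction). The gap is the reduction you flag but do not prove: it is in general \emph{impossible} to replace $J$ by an scc of $\operatorname{FIX}\cap\Delta$ that is innermost in $\Delta$ \emph{and} still bounds a subdisk of FIX meeting $P^\pr$. Concretely, let FIX be (a smoothing of) the plane $\{x=-1\}$ with a finger: a tube of radius $2$ running from the plane out past $P=\{x=0\}$ to $P^\pr=\{x=d\}$, capped there by an annulus $\{x=d,\,1\le r\le 2\}$, followed by an invaginated tube of radius $1$ returning through $P$ to $x=-\epsilon$, where it is capped by a small disk. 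Then $\operatorname{FIX}\cap\Delta$ consists of two circles, $J$ (radius $2$) and $c$ (radius $1$), nested in $\Delta$; the disk $D_J\subset\operatorname{FIX}$ bounded by $J$ reaches $P^\pr$, so the hypotheses hold, but the unique innermost curve is $c$, whose FIX-disk is the tiny cap near $x=-\epsilon$ and never approaches $P^\pr$. So after your reduction there is nothing left to apply the argument to, and without the reduction your construction collapses: $\operatorname{int}(\Delta_J)$ meets FIX (the ``feeler'' pierces it, exactly the situation of Figure \ref{fig:bag_lm}(b)), it does not lie on one side of FIX, $\Sigma=\Delta_J\cup I(\Delta_J)$ is no longer embedded, and Schoenflies is unavailable.

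This is precisely why the paper's proof is phrased homologically rather than via an embedded sphere: it takes $R$ to be the closure of the set of points whose generic arcs to infinity cross $D_J\cup\Delta_J$ algebraically once, observes that the frontier of $R\cup I(R)$ is $\Delta_J\cup I(\Delta_J)$ (a possibly singular sphere, with singularities confined to $P$), and then shows that if $I(\Delta_J)$ missed $P^\pr$, a ray in the positive $x$-direction from a point of $D_J\cap P^\pr$ would escape to infinity without crossing that frontier, contradicting $D_J\subset R\cup I(R)$. That argument needs no innermost hypothesis and covers the feeler configuration above. To repair your write-up you would either have to reproduce such a mod-2/algebraic-crossing argument for the non-innermost case or find a different way to force $I(\Delta_J)\cap P^\pr\neq\varnothing$; as written, the key step is asserted where it can actually fail. (A minor further point: even when your reduction is possible, it produces $x$ on the replacement curve $J^\pr$ rather than on $J$ as the statement literally requires, though only $x\in\operatorname{FIX}$ is used downstream.)
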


\begin{proof}
	Let $R$ be the obvious (rel $J$)-homology between $D_J$ and $\Delta_J$, where $\Delta_J \subset P$ is the subdisk of $\Delta$ bounded by $J$. $R$ is the closure of those points $z$ such that an arc from $z$ to infinity crosses $D_J \cup \Delta_J$ algebraically once. The \emph{bag} is the frontier of $R \cup I(R)$, which equals $\Delta_J \cup I(\Delta_J)$. Suppose $I(\Delta_J) \cap P^\pr = \varnothing$, then any point $p$ of $D_J \cap P^\pr$ has a ray parallel to the positive $x$-axis heading towards $\infty$ and disjoint from the bag. But this is a homological contradiction: $D_J$ is ``in the bag,'' $D_J \subset R \cup I(R)$. In the easiest case to envision, $R \cup I(R)$ is a 3-ball $B$ and $p \in Int(B)$, so the ray must intersect $\de B$. In general, $R \cup I(R)$ may not be a manifold; however, its non-manifold singularities lie on $P$ and $D_J \cap P^\pr \subset R \cup I(R) \setminus \operatorname{Frontier}(R \cup I(R))$; so, $D_J \cap P^\pr$ lies in a bounded component of the complement of the singular 2-sphere $\operatorname{Frontier}(R \cup I(R))$.

	Consequentially, $I(\Delta_J)$ meets $P^\pr$ at some point we designate as $I(y)$. Thus $y \in \Delta_J$. But $\Delta_J \subset P$ is a planar disk with area $\leq \operatorname{area}(\Delta) \leq a$. By the area hypothesis, every point $y$ of $\Delta_J$ must lie within $\sqrt{\frac{a}{\pi}}$ of a point $x \in \de \Delta_J \subset \mathrm{FIX}$. Thus we have a point $y \in P$ within $\sqrt{\frac{a}{\pi}}$ of a point $x$ in FIX such that $I(y)$ lies on $P^\pr$, hence $\operatorname{dist}(I(x) (=x),I(y)) \geq d$.

	The bag lemma's conclusion is the necessity of $(a,d)$-stretching. All our measures of necessary stretching will follow from the bag lemma.
\end{proof}

In 1982 the authors understood the bag lemma and its immediate corollary below. As we will see next, it implies either $(a,d)$-stretching, or the existence of \emph{long arcs} within the meridional scc $m$ of FIX $\cap \de($sBt$)$---the subject of the following long arc lemma. The phrase \emph{long arc} refers to the fact that these meridions $m$, in order to avoid an immediate conclusion of $(a,d)$-stretching from the bag lemma, cannot be small round circles but instead must run all about $\de($sBt$)$ and cross many parallel planes. In 1982, we considered these long arcs to be a disaster, not a lemma, as their presence thwarts the most obvious application of the bag lemma. The key insight---40 years on---was that these long arcs are \emph{useful} input to what we call the J-lemma, which leads to Propositions \ref{prop:final_4}, which provide an alternative venue for the bag lemma to find stretching. The road has many twists, turns, and forks, but in the end all lead to quantifiable $(a,d)$-stretching.

\begin{cor}[Corollary of Bag Lemma]\label{cor:baglemma}
	To establish notation, let $T$ be an $I$-invariant shrunk Bing torus (sBt) with $\operatorname{FIX} \cap T$ being two disjoint disks $D_0, D_1$ with meridional curve boundaries. Let $P, P^{\pr\pr}$ be parallel planes distance $2d$ apart. Let $\Delta, \Delta^{\pr\pr}$ be dwh of the first kind, where $\de_m \Delta$ is the meridional component of the boundary of $\Delta$. $\Delta \subset (T \cap P)$, $\Delta^{\pr\pr} \subset T \cap P^{\pr\pr}$ with $\operatorname{area}(\Delta^+)$, $\operatorname{area}(\Delta^{\pr\pr+}) < a$. Suppose $A$ is an annulus on $\de T$ between $\de_m \Delta$ and $\de_m \Delta^{\pr \pr}$ such that $A \cap (\de D_0 \cup \de D_1) = \varnothing$. Then there exist points $x,y$ with $x \in \mathrm{FIX}$, $\operatorname{dist}(x,y) \leq \sqrt{\frac{a}{\pi}}$ and $\operatorname{dist}(I(x),I(y)) \geq d$. That is, $I$ has $(a,d)$-stretching. In this corollary, as in Lemma \ref{lm:bag}, $I$ is still a smooth involution.
\end{cor}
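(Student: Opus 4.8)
\emph{Proof plan.} The plan is to reduce to the Bag Lemma (Lemma~\ref{lm:bag}). Normalize coordinates so that $P=\{x=0\}$ and $P^{\pr\pr}=\{x=2d\}$, and let $P^\pr=\{x=d\}$ be the midplane, at distance $d$ from each of $P,P^{\pr\pr}$. Since $I$ fixes FIX pointwise, it suffices to produce a smooth planar disk $\Delta_J$ with $\operatorname{area}(\Delta_J)<a$, $J:=\de\Delta_J\subset\mathrm{FIX}$, and FIX-subdisk $D_J$ bounded by $J$ meeting $P^\pr$ (or the mirror data in $P^{\pr\pr}$): then Lemma~\ref{lm:bag}, applied with far plane $P^\pr$, yields $x\in J$ and $y\in\Delta_J$ with $\operatorname{dist}(x,y)\le\sqrt{a/\pi}$ and $\operatorname{dist}(I(x),I(y))=\operatorname{dist}(x,I(y))\ge d$, the asserted $(a,d)$-stretching. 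Our only small planar disks are subdisks of $\Delta^+\subset P$ and of $\Delta^{\pr\pr+}\subset P^{\pr\pr}$, so the crux is to locate a circle of FIX inside $\Delta^+$ (or $\Delta^{\pr\pr+}$) whose FIX-disk reaches $P^\pr$.

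First record the local topology forced by the hypotheses. The disks $D_0,D_1$ cut $T$ into two solid cylinders $C_0,C_1$, whose lateral boundaries are the two annuli of $\de T\setminus(\de D_0\cup\de D_1)$. Since $\mathrm{FIX}\cap T=D_0\sqcup D_1$, the involution $I$ is free on $T\setminus(D_0\cup D_1)=\operatorname{int}C_0\sqcup\operatorname{int}C_1$; a free involution cannot preserve a contractible component (its quotient would be an aspherical $3$-manifold with $\pi_1=\Z/2$), so $I$ interchanges $\operatorname{int}C_0\leftrightarrow\operatorname{int}C_1$, and --- being connected, disjoint from FIX, and exchanged by the reflection $I$ --- these interiors lie on opposite sides of the fixed $2$-sphere. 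The hypothesis $A\cap(\de D_0\cup\de D_1)=\varnothing$ puts $A$, hence the meridians $\de_m\Delta$ and $\de_m\Delta^{\pr\pr}$, into one lateral annulus $B_1\subset\de C_1$, cutting $B_1$ into sub-annuli $A_0$ (from $\de D_0$ to $\de_m\Delta$), $A$, and $A_1$ (from $\de_m\Delta^{\pr\pr}$ to $\de D_1$), so $\Delta^+,\Delta^{\pr\pr+}$ are small planar disks whose boundary meridians lie on one side of FIX. Now mimic the bag argument across the fixed wall $D_0$: the $2$-sphere $\Sigma_0:=\Delta^+\cup A_0\cup D_0$ and its image $I(\Sigma_0)=I(\Delta^+)\cup I(A_0)\cup D_0$ share $D_0$, so the region $R_0$ bounded by $\Sigma_0$, placed opposite $I(R_0)$ across $D_0$, yields a ``bag'' $R_0\cup I(R_0)$ whose (possibly singular) frontier $2$-sphere is $(\Delta^+\cup A_0)\cup I(\Delta^+\cup A_0)$, with $D_0$ in its interior. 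One shows that the solid sub-cylinder $W\subset C_1$ with lateral boundary $A$ and meridional ends $\Delta,\Delta^{\pr\pr}$ runs from $P$ to $P^{\pr\pr}$, and uses $W$ to force one of two outcomes: either a circle of FIX enters $\Delta^+$ or $\Delta^{\pr\pr+}$ --- the favorable case, fed straight into Lemma~\ref{lm:bag} with far plane $P^\pr$ once one checks the resulting FIX-disk meets $P^\pr$ --- or else $D_0$ (resp.\ $D_1$) reaches $P^\pr$, and then a forward ray from a point of $D_0\cap P^\pr$ cannot escape the bag past $P=\{x=0\}$, so $P^\pr$ must meet $I(\Delta^+)$; thus some $y\in\Delta^+$ has $I(y)\in P^\pr$, with nearby fixed point supplied by $D_0$ through $A_0$.

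The hard part will be the bookkeeping around the annular pieces $A_0,I(A_0)\subset\de T$ that appear in the bag's frontier: unlike $\Delta^+,\Delta^{\pr\pr+}$ they are not controlled by the area bound, so one cannot simply assert that the forward ray misses the frontier. The intended remedy is the homological form of the escape argument already used in the proof of Lemma~\ref{lm:bag} (the point of $D_0$ at $x=d$ lies in a bounded complementary region of the singular frontier $2$-sphere, so $P^\pr$ must cross that $2$-sphere), together with a Sard-type choice of a forward direction in a narrow wedge transverse to and missing those two annuli (admissible since Lemma~\ref{lm:bag} needs only a sufficiently forward direction), or the observation that a displaced point landing on $A_0$ is itself within the controlled thickness of $T$ along $A_0$ of $\de D_0\subset\mathrm{FIX}$. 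One must also verify that the spanning cylinder $W$ genuinely forces one of the two good outcomes, which is where the hypothesis that it is $A$ --- not a sub-annulus abutting an end-disk --- that bridges $P$ and $P^{\pr\pr}$ is used. Everything else (general position of FIX and of the tori against the plane family, the separation facts above, and the elementary estimate that any point of a planar disk of area $<a$ lies within $\sqrt{a/\pi}$ of its boundary) is routine.
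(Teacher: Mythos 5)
Your reduction target is the right one (manufacture a scc $J\subset\mathrm{FIX}$ inside a small planar disk whose FIX-subdisk reaches the midplane $P^\pr$, then quote Lemma \ref{lm:bag}), but the step that actually produces such a $J$ is missing, and the machinery you propose in its place cannot supply it. Nowhere do you use that $T$ is a \emph{shrunk Bing torus} rather than an arbitrary $I$-invariant solid torus meeting $\mathrm{FIX}$ in two meridional disks; with only the latter hypotheses the conclusion is false (take $T$ a thin reflection-invariant round tube crossing the fixed sphere in two small disks, $I$ the standard isometric reflection, and the chamber of $T$ between $P$ and $P^{\pr\pr}$ lying entirely on one side of $\mathrm{FIX}$: then no circle of $\mathrm{FIX}$ enters $\Delta^+$ or $\Delta^{\pr\pr+}$ with a deep FIX-disk, neither $D_0$ nor $D_1$ reaches $P^\pr$, and there is no $(a,d)$-stretching at all). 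So your dichotomy is simply not forced by the spanning cylinder $W$; the ingredient that forces it is the Bing structure inside $T$, and this is exactly how the paper argues. Since $A$ runs from $\de_m\Delta\subset P$ to $\de_m\Delta^{\pr\pr}\subset P^{\pr\pr}$, the midplane $P^\pr$ must cut $A$ in a circle essential in $A$, hence a meridian of $\de T$, bounding a dwh $\Delta^\pr$ of $T\cap P^\pr$; every meridional dwh of an sBt contains a point $p$ of the Bing Cantor set, and $\mathrm{BCS}\cap T\subset D_0\cup D_1$, say $p\in D_0$. Because $\de D_0\cap A=\varnothing$, the subdisk of $D_0$ around $p$ must exit the chamber through $\Delta\cup\Delta^{\pr\pr}$, so some scc $J$ of $D_0\cap(\Delta\cup\Delta^{\pr\pr})$ bounds a subdisk $D_J\ni p$; $D_J$ thus spans from $P$ (or $P^{\pr\pr}$) to $P^\pr$, a distance $d$, while $\operatorname{area}(\Delta^+),\operatorname{area}(\Delta^{\pr\pr+})<a$, and Lemma \ref{lm:bag} applies verbatim. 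That single use of the BCS is the content of the corollary; omitting it leaves no proof.

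The fallback branch of your dichotomy is also unrepaired as written, as you partly concede. Your modified bag has frontier $(\Delta^+\cup A_0)\cup I(\Delta^+\cup A_0)$, and the homological escape argument only says $P^\pr$ (or an escaping ray) meets that frontier; it may do so in $A_0$ or $I(A_0)$, and then no estimate follows, since nothing in the hypotheses bounds the size or position of $A_0$ (the tori may be violently distorted, which is the whole point of the paper). Neither proposed remedy works: a ray from a point enclosed by a separating $2$-complex cannot be chosen to miss it (that is precisely what the homological argument encodes), and there is no ``controlled thickness of $T$ along $A_0$''---points of $A_0$ can be arbitrarily far from $\de D_0$. The paper's proof sidesteps all of this by never building a bag with pieces on $\de T$: it produces a genuine $J\subset\mathrm{FIX}\cap(\Delta\cup\Delta^{\pr\pr})$ and lets the Bag Lemma's purely planar, area-controlled bag $\Delta_J\cup I(\Delta_J)$ do the work.
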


\begin{figure}[ht]
	\centering
	\input{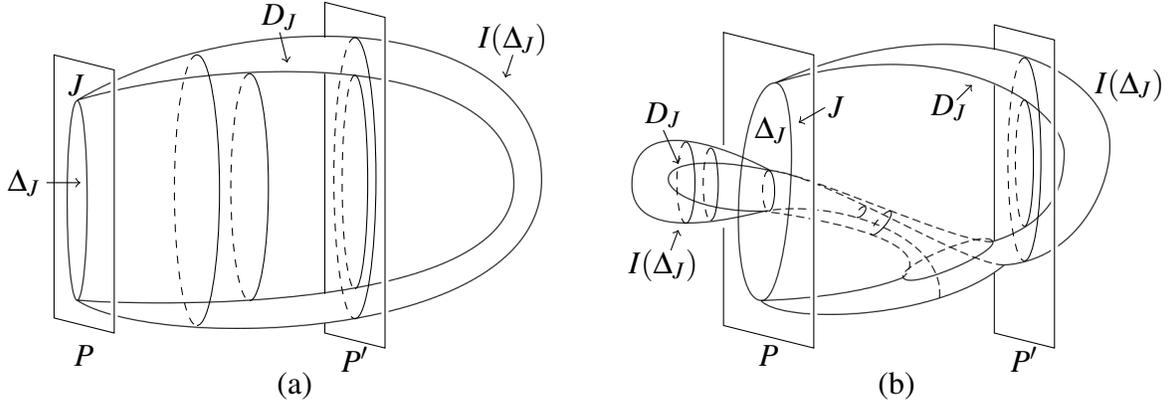}
	\caption{(a) Depicts the simplest case of the bag lemma where $\Delta_J \cup I(\Delta_J)$ is a 2-sphere bounding $R \cup I(R)$. (b) Shows the case where $D_J$ has a feeler that pierces $\Delta_J$. Any ray from a point on $D_J \cap P^\pr$ to the right must intersect $I(\Delta_J)$.}\label{fig:bag_lm}
\end{figure}

\begin{proof}
	Consider a plane $P^\pr$ parallel to $P$ and $P^{\pr\pr}$ and halfway between. There is a dwh $\Delta^\pr$ in $P^\pr \cap T$ with $\de_m \Delta^\pr \subset A$. $\Delta^\pr$ must contain a point of the Bing Cantor set. That point $p$ lies on $D_0$ or $D_1$, say $D_0$. But $D_0 \cap A = \varnothing$, so $D_0 \cap (\Delta \cup \Delta^{\pr\pr}) \subset \operatorname{Int}(D_0)$. So, there exists a scc $J \subset (D_0 \cap (\Delta \cup \Delta^{\pr\pr}))$ such that the subdisk $D_J$ of $D_0$ bounded by $J$ contains $p$.
\end{proof}

In accordance with Lemma \ref{lm:moc-limit}, we stated the bag lemma in the context of smooth involutions. We showed there that it is sufficient to work with smooth approximates $K_i$ of $I^h$, and Lemma \ref{lm:bag} and its corollary technically should be applied to these with $I = K_i$. It is possible, alternatively, to prove a similar bag lemma for wild involutions. For completeness we sketch how this works. In the wild case, FIX$(I_{\text{wild}}) \cap P$ may fail to have any scc $J$ even if $P$ is varied. So, the scc $J$ must be replaced by a separating continuum $C$ in FIX$(I_{\text{wild}}) \cap P$ such that $C \subset \operatorname{Int}(\Delta)$, where $\Delta$ is a disk in $P$ of area $< a$. Next, find a scc $J^\pr$ in FIX$(I_{\text{wild}})$ ``close'' to $C$ and containing $C$ in the disk $J^\pr$ bounds in $D_0 \subset \operatorname{FIX}(I_{\text{wild}})$. $J^\pr$ bounds a singular disk $D_{J^\pr}$ consisting of a ``collar'' on $J^\pr$ that projects $J^\pr$ perpendicularly into $\Delta$ union a null homotopy in $\Delta$. Area estimates for $D_{J^\pr}$ are not available but one can still check that every point of $D_{J^\pr}$ is within $\sqrt{\frac{a}{\pi}}$ of $J^\pr$, since $a > \operatorname{area}(\Delta)$. The ``bag'' is then $D_{J^\pr} \cup I(D_{J^\pr})$ and the rest of the argument proceeds as in the smooth case.

The following lemma states a useful consequence of the preceding corollary.

\begin{lemma}[Long Arc Lemma]\label{lm:longarc}
	Suppose $I$, $\mathbb{P}$, and a sBt $T$ whose fixed point meridional disks are $D_0$, and $D_1$ are given with the consecutive planes of $\mathbb{P}$ distance $d$ apart and transition rdwh's $\{\Delta_1, \Delta_2, \dots, \Delta_n\}$ in cyclic order around $T$, with $\Delta_i \subset P_i$ and $P_1 \neq P_n$. If $I$ does not have $(a,d)$-stretching, then $\de D_0 \cup \de D_1$ must meet all the $\Delta_i$'s except for a maximum of two in a row when the areas of the $\Delta_i^+$'s not hit by $\de D_0 \cup \de D_1$ are small ($< a$). Since $\de D_0$ and $\de D_1$ are connected sets, under the conditions above, there exist arcs $\alpha_0 \subset \de D_0$ and $\alpha_1 \subset \de D_1$ such that $\alpha_0 \cup \alpha_1$ meet all the $\de \Delta_i$'s except for a maximum of two in a row, which can occur not more than twice. \qed 
\end{lemma}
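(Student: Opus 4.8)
\emph{First assertion.} I would read the Long Arc Lemma as a repackaging of Corollary~\ref{cor:baglemma} and prove its first assertion by contraposition. Suppose $I$ has no $(a,d)$-stretching and, for contradiction, that three consecutive transition rdwh's $\Delta_i,\Delta_{i+1},\Delta_{i+2}$ (indices cyclic) are each disjoint from $\partial D_0\cup\partial D_1$ and each have $\operatorname{area}(\Delta_j^+)<a$. The goal is to exhibit two planes $2d$ apart, two of these $\Delta_j$'s lying in them, and an annulus on $\partial T$ joining their meridional boundaries and disjoint from $\partial D_0\cup\partial D_1$; Corollary~\ref{cor:baglemma} then produces $(a,d)$-stretching, the desired contradiction. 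By the rdwh definition each $\partial_m\Delta_j$ is a meridian of $\partial T$ innermost among meridians of its torus, so $\partial_m\Delta_i,\partial_m\Delta_{i+1},\partial_m\Delta_{i+2}$ are pairwise disjoint, and in the cyclic order around $T$ they cut off an annulus $A\subset\partial T$ lying between $\partial_m\Delta_i$ and $\partial_m\Delta_{i+2}$ and containing $\partial_m\Delta_{i+1}$. Since $\partial D_0$ and $\partial D_1$ miss $\Delta_i\cup\Delta_{i+1}\cup\Delta_{i+2}$ --- hence those three meridians, and hence the part of $T$ they cobound --- one checks $A\cap(\partial D_0\cup\partial D_1)=\varnothing$; and the area hypothesis of Corollary~\ref{cor:baglemma} is automatic since the $\Delta_j^+$ in play are small by assumption.

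\emph{Placing the planes; turning points.} In the applications $T$ is small, so it meets only a short run of consecutive planes of $\mathbb P$, which away from the (then irrelevant) wrap-around I may treat as equally spaced parallel planes at spacing $d$. Generically the planes of $\Delta_i,\Delta_{i+1},\Delta_{i+2}$ are not all confined to one adjacent pair, so two of them are $\geq 2d$ apart: take $\Delta,\Delta''$ to be the two corresponding $\Delta_j$'s, shrink $A$ to the sub-annulus between them, and apply Corollary~\ref{cor:baglemma}. The only other possibility is that the missed block \emph{turns} --- the $\mathbb P$-coordinate along $T$ has a local max or min there, the same turning point responsible for the $-4$ in Lemma~\ref{lm:el_count} --- so that near $\Delta_{i+1}$ the block merely oscillates between two neighbouring chambers; then I would walk one step further along the block (using $\Delta_{i-1}$ or $\Delta_{i+3}$, likewise missed with small $\Delta^+$-area when the block is longer), since an oscillating block must eventually leave the pair of chambers it began in. This extra room at turning points is exactly what the later clause ``a maximum of two in a row, which can occur not more than twice'' records.

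\emph{Second assertion.} From the first assertion, the indices $i$ with $\partial D_0\cup\partial D_1$ missing $\Delta_i$ form blocks of length $\leq 2$ in the cyclic order. As $\partial D_0$ and $\partial D_1$ are connected, I would take $\alpha_0\subseteq\partial D_0$ to be a shortest sub-arc still meeting every $\Delta_i$ met by $\partial D_0$, and likewise $\alpha_1\subseteq\partial D_1$. Passing from the circle $\partial D_0$ to the arc $\alpha_0$ can only drop the $\Delta_i$'s met \emph{solely} by the excised complementary sub-arc, and these are clustered near its two ends, so it creates at most one new missed block for $\alpha_0$, and likewise one for $\alpha_1$; hence $\alpha_0\cup\alpha_1$ meets all the $\partial\Delta_i$ except for at most two in a row, with such a pair occurring at most twice.

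\emph{Expected main obstacle.} The delicate step is the turning-point bookkeeping of the second paragraph: handling cleanly, rather than by ad hoc figure-chasing, a run of missed transition rdwh's that oscillates between two adjacent chambers and so never exposes a pair of planes a full $2d$ apart. This is the same min/max phenomenon responsible for the $-4$ in Lemma~\ref{lm:el_count}, and pinning down exactly how much slack it forces --- ``two in a row, at most twice'' --- is the heart of the matter; by contrast the hypotheses of Corollary~\ref{cor:baglemma} (the annulus $A$, the small-area bound) come essentially for free once the two planes have been chosen.
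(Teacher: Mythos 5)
Your overall route---deriving the statement from Corollary~\ref{cor:baglemma} by showing that three consecutively missed, small-area transition rdwh's force $(a,d)$-stretching---is exactly the paper's intent (the paper offers no written proof; the lemma is stated as an immediate consequence of that corollary). But two of your steps have genuine gaps. First, ``one checks $A\cap(\partial D_0\cup\partial D_1)=\varnothing$'' does not follow from $\partial D_0,\partial D_1$ missing $\Delta_i,\Delta_{i+1},\Delta_{i+2}$: each $\partial D_j$ is itself a meridian of $\partial T$, so it may be a core circle of the annulus $A$, lying strictly between $\partial_m\Delta_i$ and $\partial_m\Delta_{i+2}$ and touching none of the three meridians. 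That configuration cannot be quoted away and needs its own (fortunately easy) argument: if, say, $\partial D_0\subset A$, then $\partial D_0$ is trapped in a slab of width at most $2d$ while $D_0$ must contain points of the Bing Cantor set of $T_0$ far around the torus, so circles of $D_0\cap(\Delta_i^+\cup\Delta_{i+2}^+)$ cap off a subdisk of FIX reaching a plane at distance at least $d$, and Lemma~\ref{lm:bag} applies directly.

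Second, and more seriously, your treatment of the turning-point case fails when the missed run has length exactly three: if $\Delta_i,\Delta_{i+1},\Delta_{i+2}$ straddle a local max/min so that $P_i=P_{i+2}$, there is no ``likewise missed'' $\Delta_{i-1}$ or $\Delta_{i+3}$ to walk to (those are hit, by the very hypothesis of your contradiction), and the three missed rdwh's span only $d$, not $2d$, so Corollary~\ref{cor:baglemma} as stated gives at best $(a,d/2)$-stretching. Your suggestion that the clause ``two in a row, not more than twice'' is what absorbs this is inconsistent with the statement, whose two-in-a-row bound must hold at turning points as well; the exactly-three-at-a-turn configuration has to be excluded, and it can be, by using the bag lemma rather than the literal corollary: $\Delta_i^+$ and $\Delta_{i+2}^+$ lie in the \emph{same} plane $P_i$ and both have $\operatorname{area}^+<a$, while $\Delta_{i+1}^+$, one plane (distance $d$) away, contains a point of the Bing Cantor set, hence a point of $D_0$ or $D_1$; capping that point off by a scc of $D_0\cap(\Delta_i^+\cup\Delta_{i+2}^+)$ produces a subdisk of FIX whose boundary lies in an area-$<a$ planar disk and which meets $P_{i+1}$, exactly the input to Lemma~\ref{lm:bag}, yielding full $(a,d)$-stretching. (A smaller point: your second-assertion bookkeeping is off---a shortest sub-arc of $\partial D_0$ meeting every $\Delta_i$ that $\partial D_0$ meets loses nothing, so your ``new missed block'' count is vacuous, and the ``not more than twice'' clause, which concerns the at most two hand-off gaps between the stretches of the cyclic order covered by $\alpha_0$ and by $\alpha_1$, is not what your argument estimates.)
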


\begin{defin}
	A \emph{shrunk Bing fork} (sBfork) refers to a mother sBt and its daughters $(T_\sigma; T_{\sigma 0}, T_{\sigma 1})$.
\end{defin}

We will omit the subscript $\sigma$ of the $T$'s, $D$'s, and $E$'s in what follows for readability.

\begin{lemma}[J-Lemma]\label{lm:Jlemma}
	Let $(T; T_0, T_1)$ be a sBfork essentially meeting parallel planes $Q$ and $Q^\pr$ that are at least distance $d$ apart. That is, these three sBt meet $Q,Q^\pr$ in rdwh with $T \cap Q$ containing rdwh $\Delta$, and $T \cap Q^\pr$ containing rdwh $\Delta^\pr$. Let $\alpha$ be an arc of $\de D_0 \subset \mathrm{FIX} \cap \de T_0$ meeting $\Delta^\pr$ exactly at its endpoints, crossing $Q$, and having linking number 1 with $\beta = \operatorname{longitude}(T_1)$. Assume $\beta \cap \Delta = \varnothing$ and $\beta \cap \Delta^\pr = \varnothing$. This linking number is (well-)defined by closing $\alpha$ by any arc in $\Delta^\pr$. Suppose $\operatorname{area}^+(\Delta) < a$. In this circumstance, $I$ has $(a,d)$-stretching.

	\begin{figure}[ht]
		\centering
		\begin{tikzpicture}
			\node at (0,0) {\includegraphics[scale=0.8]{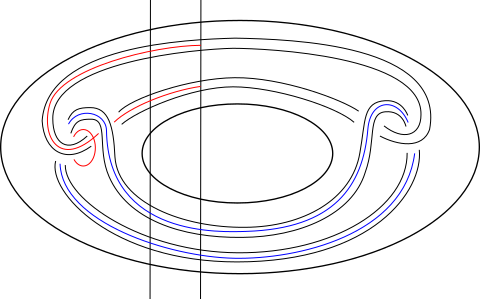}};
			\node at (-0.8,-3.4) {$Q^\pr$};
			\node at (-1.9,-3.4) {$Q$};
			\node at (-1.35,-2.9) {$d$};
			\draw[->] (-1.15,-2.9) -- (-0.85,-2.9);
			\draw[->] (-1.55,-2.9) -- (-1.85,-2.9);
			\draw[line width=0.35ex] (-1.9,0.35) -- (-1.9,2.55);

			\node at (-1.3,3.3) {$\Delta^\pr$};
			\draw[->] (-1.1,3) -- (-0.9,2.35);
			\node at (-2.6,2.9) {$\Delta$};
			\draw[->] (-2.4,2.8) -- (-2,2.3);
			\node[red] at (-4.3,2) {$\alpha$};
			\draw[red,->] (-4.1,1.9) -- (-3.7,1.6);

			\node at (4.5,1.8) {$T$};
			\node at (2.3,3) {$T_0$};
			\draw[->] (2.1,2.8) -- (1.8,2.4);
			\node at (3.2,-2.7) {$T_1$};
			\draw[->] (3,-2.5) -- (2.5,-2);

            \draw[fill=black] (-3.1,0.25) circle (0.2ex);
			\node[red] at (-3.2,-0.6) {$J$};
			\node at (-2.7,-1.2) {$E$};
			\node[blue] at (-4.2,-2) {$\beta$};
			\draw[blue,->] (-4,-2) -- (-3.15,-1.6);
		\end{tikzpicture}
		\caption{}\label{fig:Jlemma}
	\end{figure}
\end{lemma}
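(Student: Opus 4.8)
The plan is to reduce to the Bag Lemma (Lemma~\ref{lm:bag}), applied with $Q$ and $Q^\pr$ playing the roles of its two parallel planes and with $\Delta^+\subset Q$ playing the role of its disk of area $\le a$; the latter is legitimate because $\operatorname{area}(\Delta^+)=\operatorname{area}^+(\Delta)<a$, and, as elsewhere in the proof, we have tacitly replaced $I$ by a smooth approximant (Lemma~\ref{lm:moc-limit}) to which Lemma~\ref{lm:bag} literally applies. It therefore suffices to produce a simple closed curve $J\subset\mathrm{FIX}\cap\Delta^+$ whose bounded fixed subdisk $D_J\subset\mathrm{FIX}$ meets $Q^\pr$; Lemma~\ref{lm:bag} then returns $x\in J\subset\mathrm{FIX}$ and $y\in\Delta^+$ with $\operatorname{dist}(x,y)\le\sqrt{a/\pi}$ and $\operatorname{dist}(I(x),I(y))\ge d$, which is exactly the claimed $(a,d)$-stretching.

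First I would locate $J$. The rdwh $\Delta\subset T\cap Q$ is, like every residual disk-with-holes, pierced by $\mathrm{FIX}$ (it contains points of the Bing Cantor set), so $\mathrm{FIX}\cap Q$ has a component meeting $\Delta$; moreover $\alpha\subset\de D_0\subset\mathrm{FIX}$ crosses $Q$, and, by the inductive nesting of dwh within their mothers together with general position as arranged in the setting from which this Lemma is invoked, it does so inside $\Delta$. Since $\Delta$ is a \emph{connected component} of $T\cap Q$ and $\mathrm{FIX}\cap T$ near that crossing is carried by the fixed disk $D_0$, the component $J$ of $\mathrm{FIX}\cap Q$ that we track can be kept inside $\Delta$, hence inside $\Delta^+$. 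This is precisely the point of having passed to $\operatorname{area}^+$ (trivial holes filled) and to residual dwh: they turn the purely combinatorial control of $\Delta$ into an honest bound on the area of the planar region in which $J$ is forced to live, and hence into the length scale $\sqrt{a/\pi}$.

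Next I would use the linking hypothesis to push $D_J$ across $Q$ and out to $Q^\pr$. Closing $\alpha$ by an arc in $\Delta^\pr$ gives the loop $\hat\alpha$ with $\operatorname{lk}(\hat\alpha,\beta)=1$, $\beta=\operatorname{longitude}(T_1)$; equivalently $\hat\alpha$ meets the spanning disk $E$ of $\beta$ essentially. Because $\beta\cap\Delta=\beta\cap\Delta^\pr=\varnothing$, the capping arc can be routed off $E$, so this essential intersection is pinned onto $\alpha\subset\de D_0$; following it through the cut of $D_0$ along $Q$ shows that $D_J$ cannot be isotoped to either side of $Q^\pr$ — the linking with $\beta$ forces it to wind once through the clasp of the Bing pair inside $T$, and since the sBfork essentially meets $Q^\pr$ (so $T_0,T_1$ and the clasp between them genuinely straddle $Q^\pr$), $D_J$ must cross $Q^\pr$. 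With $J\subset\mathrm{FIX}\cap\Delta^+$ and $D_J\cap Q^\pr\ne\varnothing$ in hand, Lemma~\ref{lm:bag} applies and produces the $(a,d)$-stretching.

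I expect the main obstacle to be carrying out these last two steps simultaneously. Because $\mathrm{FIX}$ is wild, $D_J$ may be arbitrarily convoluted, so the assertion that it must reach $Q^\pr$ has to be argued \emph{homologically} — through the linking number of $\hat\alpha$ with $\beta$ and the essential meeting of $T_0,T_1$ with $Q^\pr$, much in the spirit of the homology region $R$ in the proof of Lemma~\ref{lm:bag} — whereas the containment $J\subset\Delta^+$, which supplies the length scale $\sqrt{a/\pi}$, is a purely metric statement. Reconciling the two, i.e.\ keeping $J$ genuinely inside the small disk $\Delta^+$ without disturbing the linking data that drives $D_J$ across to $Q^\pr$, is the crux, and is exactly what the hypotheses $\operatorname{area}^+(\Delta)<a$ and $\beta\cap\Delta=\beta\cap\Delta^\pr=\varnothing$, and the residual-dwh formalism generally, are tailored to make possible.
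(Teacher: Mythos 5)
Your high-level plan---the bag lemma on the pair $Q,Q^\pr$ with $\Delta^+$ as the small-area disk, plus the linking of the closed-off $\alpha$ with $\beta$ through a spanning disk of $\beta$---is the paper's plan, but the two deductions that make it run are asserted rather than proved, and the first is unjustified as stated. You claim the component of $\mathrm{FIX}\cap Q$ that $\alpha$ crosses ``can be kept inside $\Delta$, hence inside $\Delta^+$.'' Nothing forces this: such a component passes through $\de D_0\subset\de T$ and continues along $\mathrm{FIX}$ outside $T$, where it is in no way confined to $\Delta$ or $\Delta^+$; in general the components of $\mathrm{FIX}\cap Q$ that meet $\Delta^+$ are not closed curves of $\mathrm{FIX}\cap\Delta^+$ at all. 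The paper does not track a component of $\mathrm{FIX}\cap Q$; it proves the \emph{existence} of a scc $J_0$ of $\mathrm{FIX}\cap\Delta^+$ (contained in $\Delta^+$ by definition) meeting $\alpha$ in an odd number of points, and that existence is exactly where the linking and disjointness hypotheses are spent: take the pre-shrunk disk $E_0\subset T$ with $\de E_0=\beta$ and $E_0\cap D_0=\varnothing$ (Figure \ref{fig:finite_approx}(c)), cut it off along $\Delta^+$ by innermost circles (possible since $\beta\cap\Delta=\varnothing$) to obtain $E$ with $\de E=\beta$ and $E\cap\Delta^+=\varnothing$; since $\alpha$ closed off in $\Delta^\pr$ has linking number one with $\beta$ and hence intersection number one with $E$, and since all intersections of $E$ with $\mathrm{FIX}$ arise from the cut-off along $\Delta^+$ (because $E_0\cap D_0=\varnothing$), some scc of $\Delta^+\cap\mathrm{FIX}$ must meet $\alpha$ an odd number of times. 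Your proposal invokes the disk $E$ only in the later step and never performs this cut-and-count, so it never actually produces a $J$ inside $\Delta^+$ to feed to Lemma \ref{lm:bag}.

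Second, your route from the linking data to ``$D_J$ must cross $Q^\pr$'' (winding once through the clasp, cannot be isotoped to either side of $Q^\pr$) is not an argument, and it is also not the mechanism that works. Once $J_0$ meets $\alpha$ an odd number of times, the rest is elementary separation on the $2$-sphere $\mathrm{FIX}$: $J_0$ separates $\mathrm{FIX}$, the two endpoints of $\alpha$ lie in different complementary disks, and by hypothesis both endpoints lie on $\Delta^\pr\subset Q^\pr$; hence the subdisk $D_{J_0}\subset\mathrm{FIX}$ bounded by $J_0$ contains an endpoint of $\alpha$ and therefore meets $Q^\pr$. Lemma \ref{lm:bag}, applied to $J_0\subset\Delta^+\cap\mathrm{FIX}$ with $\operatorname{area}(\Delta^+)<a$ and the planes $Q,Q^\pr$ at distance at least $d$, then yields the $(a,d)$-stretching. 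In particular, the hypothesis that $\alpha$ meets $\Delta^\pr$ exactly at its endpoints is what carries the fixed subdisk out to $Q^\pr$---your write-up never uses it for that purpose---while $\beta\cap\Delta=\varnothing$ together with $E_0\cap D_0=\varnothing$ is what legitimizes the cut-off of $E_0$ and the parity count. As written, neither of the two essential steps is closed.
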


\begin{proof}
	Recall that $D_0$ is the component of FIX $\cap T$ that meets $T_0$. Coming from the pre-shrunk Bt there is a disk $E_0 \subset T$ with $\de E_0 = \beta$ and $E_0 \cap D_0 = \varnothing$ (see Figure \ref{fig:finite_approx}c). Now using innermost circles on $\Delta^+$, cut $E_0$ off to obtain a disk $E$ with $\de E = \de E_0 = \beta$ and $E$ disjoint from $\Delta^+$. That is, cut off $E_0$ near the dark line in Figure \ref{fig:Jlemma} to obtain $E$ with the same boundary $\beta$. Of course, $E$ is not necessarily contained in $T$, because of the holes in the rdwh $\Delta$.

	\begin{figure}[ht]
		\centering
		\input{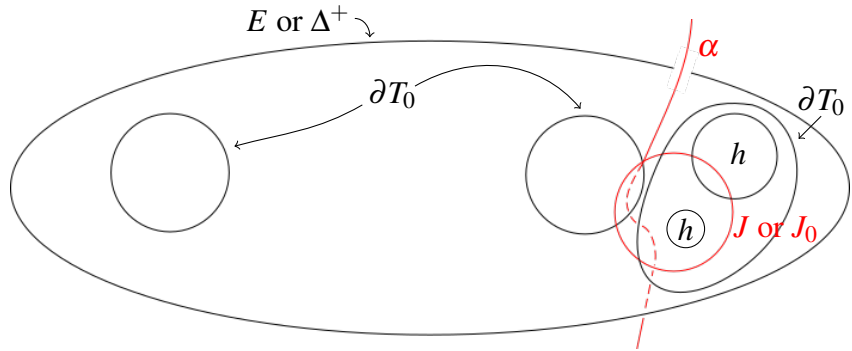}
		\caption{The pictures in $E$ and $\Delta^+$ are similar and thus rendered as one in this figure. The \emph{holes} labeled by $h$ arise from cut and paste along $\Delta^+$. The holes do not lie in $T$; they arise from ``feelers'' penetrating $T$. Intersections with $D_0 \subset$ FIX are rendered in red.}\label{fig:fix_intersect}
	\end{figure}

	The intersections of $E$ with $\de T$ and FIX are similar to the intersections of $\Delta^+$ with $\de T$ and FIX, since all intersections of $E$ with $\de T$ and $D_0$ arise from cutting off $E_0$ on $\Delta^+$, so those intersections are drawn together in Figure \ref{fig:fix_intersect}. After cutting off, notice that $E \cap \de \Delta^+ = \varnothing$. The linking hypothesis forces a scc $J$ of $E \cap \mathrm{FIX}$ to have odd intersection with $\alpha$, since $\alpha$ (closed off by an arc in $\Delta^\pr$) has linking number one with $\beta = \de E$, and, therefore, intersection number one with $E$. By the same token, there must be a scc $J_0$ of $\Delta^+ \cap \mathrm{FIX}$ having odd intersection with $\alpha$. Assuming $J_0$ innermost in $\Delta^+$ w.r.t.\ this property, let $D_{J_0}$ be the subdisk of $\mathrm{FIX}$ bounded by $J_0$, an innermost scc of $\mathrm{FIX} \cap \Delta^+$ meeting $\alpha$ in an odd number of points.

	The geometry of $D_{J_0}$ feeds into the $(a,d)$-stretching bound given by the bag lemma. The planar area in $Q$ spanned by $\de D_{J_0}$ determines $a$, and the $x$-coordinate separation between $Q$ and $Q^\pr$, which $D_{J_0}$ spans, determines $d$.
\end{proof}

\subsection*{General Strategy}
Establishing large $(a,d)$-stretching involves finding a disk $D$ on FIX such that $\de D$ lies on a plane $P$ where $\de D$ bounds a disk on $P$ of relatively small area $a$ and $D$ itself spans across to a plane parallel to $P$ relatively far, namely $d$, away. The smallness of $a$ will come from analyzing the exponentially large number of disjoint rdwh's of sBt's at stage $n$ lying on $P$. Finding an appropriately large $d$ will come from seeking opportunities to apply the J-lemma. Specifically, the arc $\alpha$ in the J-lemma will arise by analyzing how far away the turning points of daughter sBt's are from their mother's turning points. If there is a significant rotation from mother to daughter, then the long arc lemma will provide us with the arc $\alpha$ of the J-lemma. If, further, the granddaughters rotate significantly within a daughter, then Proposition \ref{prop:final_4} below locates $(a,d)$-stretching. But if there is a ``delay'' in rotation, so that $\alpha$ is not ``sufficiently folded,'' we must continue with a second strategy for applying the J-lemma called the \emph{endgame}.

Proposition \ref{prop:final_4} will get us through the trickiest bit of the proof. The context of Proposition \ref{prop:final_4} is a final $n$, $n$-$f$. We call the $n$-$f$ $T_m$ for ``mother.'' In it are daughters $T_{m0}$ and $T_{m1}$, we rename $T_{m0}$ as $T$ and mostly work inside $T$ with $T$'s daughters $T_0$ and $T_1$, granddaughters of $T_m$; we also rename $T_{m1}$ as $T_s$ for the sister of $T_{m0}$. The strategy of the proof is to seek the hypothesis of the J-lemma using the long arc lemma.

To establish notation, see Figure \ref{fig:6_strings} in which $n=4$ and $T_m$ is drawn spanning three of the original planes, $P$, $Q$, and $R$, of $\mathbb{P}_{n+1}$, so $T_m$ is drawn unfolded. To make the proof of Proposition \ref{prop:final_4} easier to compare to Figure \ref{fig:6_strings}, we have transferred the notation from the figure directly into the proof, sacrificing some unimportant generality. After initially reading the proof in this restricted context, $n_i = 4$ and the mother torus $T_m$ unfolded, i.e.\ spanning three planes $P$, $Q$, and $R$ from the original set of planes $\mathbb{P}_{n_i+1}$, the general case requires only slight modifications. In general, $T_m$ may span many original planes or be ``highly folded'' and span back and forth among only two planes. We address the required modification at the end of the proof.

\subsection*{Setup for Proposition \ref{prop:final_4}}
Proposition \ref{prop:final_4} rests on the arc $\alpha$ from the J-lemma and redrawn in Figure \ref{fig:6_strings}. The pattern in which such an $\alpha$ passes through its turning point plane, $P$ in Figure \ref{fig:6_strings}, and its parallels at distances $d$ and $2d$ will be the same in all cases. There are ten rdwh's labelled---two, $\vphantom{\Delta}_m\Delta_f^P$ and $\vphantom{\Delta}_m\Delta_i^Q$, are rdwh of $P \cap T_m$ and $Q \cap T_m$. The remaining eight labelled rdwh's are rdwh's of $T$ (a daughter of $T_m$). The subscripts $i$ and $f$ indicate initial and final rdwh's with respect to the cyclic order on $T$.

Let $(T_m; T, T_s)$ be a final $n$ fork, i.e.\ \begin{tikzpicture}[baseline={($(current bounding box.center) - (0,5pt)$)}]
	\node at (0,0) {$n$};
	\draw (0.5,-0.25) -- (0.2,0) -- (0.5,0.25);
	\node at (1,0.25) {$n-2$};
	\node at (1,-0.25) {$n-2$};
\end{tikzpicture}
w.r.t.\ $r\ell$. Let $(P,P^\pr,P^{\pr\pr},Q^{\pr\pr},$ $Q^\pr,Q)$ be 6 consecutive parallel planes with a fixed spacing $d > 0$ between $P \leftrightarrow P^\pr$, $P^\pr \leftrightarrow P^{\pr\pr}$, $Q^{\pr\pr} \leftrightarrow Q^\pr$, and $Q^\pr \leftrightarrow Q$. Let $a$ be the maximum value of $\operatorname{area}^+$ for 6 initial or final rdwh: $\vphantom{\Delta}_m\Delta_f^P$, $\Delta_i^{P^\pr}$, $\Delta_f^{P^\pr}$, $\Delta_f^{Q^\pr}$, $\Delta_i^{Q^\pr}$, $\vphantom{\Delta}_m\Delta_i^Q$, as drawn in Figure \ref{fig:6_strings}, where we have provided an example illustration of both the daughters and granddaughters inside one of the two possible configurations of $n$-$f$'s for $n=4$. The superscript labels the plane, the subscripts mean initial/final w.r.t.\ a chosen orientation on $T$, and the back-script $m$ indicates an rdwh on $T_m$; the other 8 are rdwh on $T$. $T_m$ being a final 4 implies an absence of certain rdwh, indicated according to our shorthand notation $R \cap T = \varnothing = P \cap T_S$ (which follows from $r\ell(T) = 2 = r\ell(T_S)$). To fix labels, assume $T_0 \cap P \neq \varnothing \neq T_1 \cap Q$. Again, we write $\cap = \varnothing$ to indicate no rdwh.

\begin{figure}[ht]
	\centering
	\input{Inserts/Fig2.10.tex}
	\caption{}\label{fig:6_strings}
\end{figure}

\begin{prop}\label{prop:final_4}
    If either ($T_0 \cap \Delta_i^{P^{\pr\pr}} \neq \varnothing$ and $T_0 \cap \Delta_f^{P^{\pr\pr}} \neq \varnothing$), or ($T_1 \cap \Delta_f^{Q^{\pr\pr}} \neq \varnothing$ and $T_1 \cap \Delta_i^{Q^{\pr\pr}} \neq \varnothing$), then $I$ induces $(a,d)$-stretching as defined above.
\end{prop}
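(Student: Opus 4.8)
The plan is to manufacture, inside the sBfork $(T;T_0,T_1)$ (and, in the second alternative, inside $(T;T_1,T_0)$, the two cases being interchanged by the evident symmetry $P\leftrightarrow Q$, $P^\pr\leftrightarrow Q^\pr$, $P^{\pr\pr}\leftrightarrow Q^{\pr\pr}$, $T_0\leftrightarrow T_1$), exactly the hypotheses of the J-Lemma (Lemma \ref{lm:Jlemma}); that lemma then produces the asserted $(a,d)$-stretching with no further work. So it is enough to treat the first alternative, $T_0\cap\Delta_i^{P^{\pr\pr}}\neq\varnothing$ and $T_0\cap\Delta_f^{P^{\pr\pr}}\neq\varnothing$. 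For the two parallel planes of the J-Lemma we take $P^\pr$ and $P^{\pr\pr}$, which are distance $d$ apart; for its small-area rdwh $\Delta\subset T\cap P^\pr$ we take whichever of $\Delta_i^{P^\pr},\Delta_f^{P^\pr}$ lies on the strand of $T$ traversed by the arc $\alpha$ below, so that $\operatorname{area}^+(\Delta)\le a$ by the definition of $a$; for its rdwh $\Delta^\pr\subset T\cap P^{\pr\pr}$ we take whichever of $\Delta_i^{P^{\pr\pr}},\Delta_f^{P^{\pr\pr}}$ the arc $\alpha$ meets at its endpoints; and for $\beta$ we take $\operatorname{longitude}(T_1)$ in the concrete form $\partial E_0$, with $E_0\subset T$ the disk inherited from the preshrunk Bing tree (Figure \ref{fig:finite_approx}(c)), cut off along innermost disks of $\Delta^+$ and then of $(\Delta^\pr)^+$ so that $\beta$ is disjoint from $\Delta$ and $\Delta^\pr$, exactly as in the proof of the J-Lemma.

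The essential remaining ingredient is the arc $\alpha\subset\partial D_0\subset\operatorname{FIX}\cap\partial T_0$ with both endpoints on $\Delta^\pr$, crossing $P^\pr$, and with odd linking number with $\beta$. We take it to be the arc $\alpha$ of Figure \ref{fig:6_strings}: an arc along a component of $\operatorname{FIX}\cap\partial T_0$ that turns at the plane $P$ and runs out through $P^\pr$ and $P^{\pr\pr}$. Because it turns at $P$ it crosses $P^{\pr\pr}$ twice, once on the outgoing and once on the return strand of $T$, and it is precisely the hypothesis $T_0\cap\Delta_i^{P^{\pr\pr}}\neq\varnothing\neq T_0\cap\Delta_f^{P^{\pr\pr}}$ that allows such an arc to lie on $\partial T_0$; truncating it to an arc with both endpoints on the chosen $\Delta^\pr$ (using that $T_0$ meets that rdwh of $T$) leaves it still crossing $P^\pr$. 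If $\partial D_0$ should fail to fold enough to carry such an $\alpha$, then applying the Long Arc Lemma (Lemma \ref{lm:longarc}) to $T$ with the $d$-spaced planes $P,P^\pr,P^{\pr\pr}$ already forces $(a,d)$-stretching, the skipped intermediate transition rdwh of $T$ in $P^\pr$ and $P$ having $\operatorname{area}^+\le a$. The heart of the proposition is then the claim that this $\alpha$, closed off by an arc in $\Delta^\pr$, has odd linking number with $\beta=\partial E_0$: its excursion from $\Delta^\pr$ toward $P$, around the turn, and back forces an odd algebraic number of piercings of the longitude disk $E_0$ of $T_1$ --- this is the Bing-pair clasp between $T_0$ and $T_1$ --- and the two-sided condition on $T_0$ (the ``significant rotation of the granddaughters inside $T$'' of the general strategy) is exactly what keeps those piercings from cancelling. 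With $\alpha,\beta,\Delta,\Delta^\pr$ and the planes $P^\pr,P^{\pr\pr}$ assembled, the J-Lemma produces $x\in\operatorname{FIX}$ and $y$ with $\operatorname{dist}(x,y)\le\sqrt{a/\pi}$ and $\operatorname{dist}(I(x),I(y))\ge d$, i.e.\ the $(a,d)$-stretching.

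The restricted picture ($n=4$, $T_m$ unfolded across $P,Q,R$) is used only for comparison with Figure \ref{fig:6_strings}; when $T_m$ is highly folded, or $n$ larger, the turning plane of $\alpha$ together with its two consecutive $d$-spaced parallels again play the roles of $P,P^\pr,P^{\pr\pr}$ and the argument is unchanged. I expect the main obstacle to be exactly the linking computation just sketched: verifying rigorously that the arc threaded from the outgoing to the return $P^{\pr\pr}$-strand of $T$ has odd intersection with $E_0$, where the cyclic order of the rdwh of $T$, the clasp structure of the Bing pair $(T_0,T_1)\subset T$, and the hypothesis must be combined with care --- and where it becomes visible why the two-sided condition on $T_0$, and not merely $T_0$ meeting a single $P^{\pr\pr}$-strand, is what the proof requires.
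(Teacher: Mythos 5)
Your overall plan (reduce to the J-lemma) is in the right family of ideas, but you apply the J-lemma one level too deep, and both load-bearing steps fail there. You take the fork to be $(T;T_0,T_1)$ with $\beta=\operatorname{longitude}(T_1)$, planes $P^\pr,P^{\pr\pr}$, and $\Delta,\Delta^\pr$ among the rdwh of $T$. First, Lemma \ref{lm:Jlemma} needs the \emph{boundary} curve $\beta$ itself to be disjoint from $\Delta$ and $\Delta^\pr$; cutting $E_0$ along innermost circles of $\Delta^+$ and $(\Delta^\pr)^+$ changes only the interior of the spanning disk, never $\beta=\partial E_0$, so it cannot produce that disjointness. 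And nothing in the proposition's hypothesis controls $T_1$: the hypothesis constrains $T_0$ only, and (as the opening paragraphs of the paper's proof stress) $T_1$ may be pushed left through $P$ and back through $P$ and $P^\pr$ without triggering the hypothesis, in which case nothing prevents every longitude of $T_1$ from running through the very rdwh you chose for $\Delta$ and $\Delta^\pr$. Second, the linking claim --- that the arc which turns out beyond $P$ has odd linking with $\operatorname{longitude}(T_1)$ --- is exactly the step you flag as uncertain, and it is not true in general: the clasp of the granddaughter pair $(T_0,T_1)$ sits in the central region (between $P^{\pr\pr}$ and $Q^{\pr\pr}$ in Figure \ref{fig:6_strings}), while your loop's excursion is out past $P$; in the generic drawn configuration $T_1$, together with a spanning disk of its longitude, stays to the right of $P^{\pr\pr}$, so the linking number is $0$. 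The hook your arc does make at the $P$-end is the mother-level clasp, i.e.\ with $T_s$, not with $T_1$.

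This is exactly why the paper runs the J-lemma at the fork $(T_m;T,T_s)$: the final-$n$ structure gives $r\ell(T_s)=2$ with $T_s\cap P=\varnothing$, so a longitude $\beta$ of $T_s$ with the required disjointness exists, the arc $\alpha\subset\partial D_0$ with deep feet on $\Delta_i^{P^\pr}$ and $\Delta_f^{P^\pr}$ and crossing $P$ has linking $\pm1$ with $T_s$, and the small-area disk is $\vphantom{\Delta}_m\Delta_f^P$, with $P\leftrightarrow P^\pr$ supplying the distance $d$. Moreover, the proposition's hypothesis is used differently from how you use it: it guarantees that $\mathrm{BCS}\cap T_0\subset D_0$ has points both on $\vphantom{\Delta}_m\Delta_f^P$ and on a $P^{\pr\pr}$-rdwh, so that in the complementary case --- when $\partial D_0$ misses one of $\Delta_i^{P^\pr},\Delta_f^{P^\pr}$ --- an interior circle of $D_0\cap(\text{that }P^\pr\text{-rdwh})$ cuts off a subdisk of $D_0$ containing a BCS point a distance $d$ away, and Lemma \ref{lm:bag} gives $(a,d)$-stretching directly. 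Your substitute for this case (invoking Lemma \ref{lm:longarc} when $\partial D_0$ ``fails to fold enough'') does not deliver it: that lemma needs area control on the skipped rdwh ($\operatorname{area}^+$ of the $P^{\pr\pr}$-rdwh is not among the six quantities bounded by $a$), and its conclusion constrains $\partial D_0\cup\partial D_1$ rather than producing the stretching you need. So as written the proposal has genuine gaps at both the disjointness of $\beta$ and the linking computation, and it misses the bag-lemma half of the paper's dichotomy.
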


\subsection*{Turning points and displacement}
We referred in the general strategy above to the idea of 'turning points', which are intuitively the places where the two daughters, $T_0$ and $T_1$, of a sBt $T$ clasp one another in their mother $T$. The phrase \emph{tightly clasped} is applicable when the upper \emph{turning} point of one daughter is adjacent to the \emph{lower} turning point of the other, an important special case, but by no means general. Let's make that notion of \emph{turning point} more formal. Suppose $T$ is a sBt with rdwh's $\{\Delta_1, \Delta_2, \dots, \Delta_n\}$ in cyclic order around $T$. Let $T_0$ and $T_1$ be the daughters of $T$. Then the \emph{lowest turning point} of $T_0$ in $T$ is obtained by looking at the universal cover of $T$ and a lift of $T_0$ in it and finding the lowest $\Delta_i$ that contains an rdwh of the lift of $T_0$. Similarly, we can define \emph{highest turning point}; and, of course, the same definition applies to $T_1$. Notice that this definition allows us to describe the relative locations of turning points from generation to generation when the rdwh's are arising from intersections with some set of planes $\mathbb{P}$. We use the term ``displacement'' to refer to the relative position of the daughters' turning points relative to their mother's turning points. These displacements will always be measured by rdwh's rather than continuously. To initialize the induction, the turning points of the outer $T_\varnothing$ are determined by the initial and final planes of $\mathbb{P}_{n+1}$.

Let us express these notions in a diagram. Consider a sBt $T_\sigma$ with its cyclically ordered family $\{\Delta_i\}$ of rdwh. Let $r: T_\sigma \ra S^1$ be a retraction carrying $\{\Delta_i\}$ to points $\{p_i\}\subset S^1$ with any consecutive string of $\Delta_i$'s that lie on the same plane all mapping to the same point on $S^1$, but otherwise retaining the original cyclic order. Now let $\tld{r}: \tld{T}_\sigma \ra \R^1$ be the lift. These maps constitute a discrete approximation to a projection of $T_\sigma$ onto $S^1$, with a real-valued $\tld{r}$ from the universal cover:
\[
\begin{tikzpicture}
	\node at (0,0) {$\tld{T}_\sigma$};
	\draw[->] (0.4,0) -- (1.6,0);
	\draw[->] (0,-0.4) -- (0,-1.1);
	\node at (-0.2,-0.75) {\footnotesize{$\tld{r}$}};
	\node at (2,0) {$T_\sigma$};
	\node at (0,-1.5) {$\R^1$};
	\draw[->] (0.4,-1.5) -- (1.6,-1.5);
	\draw[->] (2,-0.4) -- (2,-1.1);
	\node at (1.8,-0.75) {\footnotesize{$r$}};
	\node at (2,-1.5) {$S^1$};
\end{tikzpicture}
\]

Consider lifts of $T_{\sigma 0}$ and $T_{\sigma 1}$ into $\tld{T}_\sigma$. The \emph{turning points} of $T_{\sigma 0}$ and $T_{\sigma 1}$ in $T_\sigma$ correspond to lowest and highest rdwh in the lifts. Then \emph{displacement} $d$ is the distance between a turning point of a mother and a turning point of one of her daughters. This distance is simply measured discretely along the real axis $\R^1$ according to the $\tld{r}$ coordinate, introduced above, on $\tld{T}$ the universal cover. The periodic nature of the lifted pattern of rdwh in $\tld{T}_\sigma$ shows that turning points and displacement are defined independently of the lift. If $T_\sigma$ wiggles back and forth between two planes of $\mathbb{P}_{n_i+1}$ there may be several lowest/highest turning points. In the important case of the daughters of $n$-$f$'s, the ``final'' condition implies that the daughters are tightly clasping which determines unique initial and final upper and lower turning points. This uniqueness propagates through later generations provided the daughters are \emph{rather precise}, a condition defined after completing the proof of Proposition \ref{prop:final_4}

\begin{proof}
	The turning points of $T_0$ and $T_1$ in Figure \ref{fig:6_strings} as drawn do \emph{not} satisfy the hypotheses of Proposition \ref{prop:final_4}, but if any of those turning points were shifted one plane toward the center (that is, between $P^{\pr \pr}$ and $Q^{\pr \pr}$), then the hypotheses of Proposition\ \ref{prop:final_4} \emph{would} be satisfied. Note that in Figure \ref{fig:6_strings}, $T_0$ and $T_1$ each has $rl=0$; however, that is not a hypothesis of Proposition \ref{prop:final_4}.
	
	The two small arrows (1 and 2) near the center of Figure \ref{fig:6_strings} are included in the figure to point out the need for us to deal with the possibility that the clasps are not necessarily tight, that is, it may not be the case that the lowest turning point of $T_0$ and the highest turning point of $T_1$ (or vice versa) occur in consecutive planes, as they appear in Figure \ref{fig:6_strings}.  The turning point of $T_1$ near arrow 1 can be pushed counterclockwise (and the turning point of $T_0$ near arrow 2 can be pushed clockwise), still without satisfying the hypotheses of Proposition \ \ref{prop:final_4}. But there is a definite limit in both  cases or the hypotheses of Proposition \ \ref{prop:final_4} will be satisfied.

	In the case of the turning point near arrow 1, we will see that $T_1$ may be pushed left through $P$ up and around the horn, back through $P$ and $P^\pr$, without satisfying the hypotheses. However, if $T_1$ is stretched to also go through $P^{\pr \pr}$ in $\Delta_f^{P^{\pr\pr}}$, then the hypotheses will be satisfied where $T_1$ is playing the role of $T_0$. Similarly, in the case of the turning point near arrow 2, $T_0$ may be pushed through $Q$, around the bend, back through $Q$ and $Q^\pr$, but again if that end of $T_0$ is pushed through $Q^{\pr \pr}$ in $\vphantom{\Delta}_m\Delta_f^{Q^{\pr \pr}}$, the hypotheses will be satisfied. 
	
	We seek to prove $(a,d)$-stretching. The overall strategy of the proof is that we first attempt to use the bag lemma to imply $(a,d)$-stretching. If that fails, we conclude that the long arc lemma implies that the hypotheses of the J-lemma accrue, and so the J-lemma will imply $(a,d)$-stretching. So the failure of one attempt implies the existence of the hypotheses for a different strategy. Here are the details.
	
	By symmetry, it suffices to show the claimed stretching assuming $T_0$ has rdwh intersections with $\Delta_i^{P^{\pr\pr}}$ and $\Delta_f^{P^{\pr\pr}}$. In addition we will assume that $T_0$ goes through $P$ as pictured, rather than around the other direction through $Q$. If $T_0$ went the other way, then it would intersect the corresponding $Q^{\pr \pr}$'s and the plane $Q$ and we would do the analysis on the $Q$ side. 
	
	Only initial and final rdwh $\Delta$ are relevant, so let us draw a straight, or \emph{monotone}, picture of $T$ (see Figure \ref{fig:PQ}), which omits some recurrent planar intersections, and focus on the stretching caused by the long ``hook'' in $T_0$. This monotone picture is a simplification already encountered when deleting the ``extra wiggles'' in Figure \ref{fig:planes_alt}. If we see $(a,d$)-stretching in the monotone picture a fortiori it occurs in the literal picture.

	Recall that $D_0$ denotes the meridional disk of $\mathrm{FIX} \cap T$ that intersects $T_0$. For the actual $I^h$, $D_0$ is wild, but continuing our approximation of $I^h$ by $I^h \coloneqq K_i$, $D_0$ will be smooth. Suppose $\de D_0$ fails to intersect either $\Delta_i^{P^\pr}$ or $\Delta_f^{P^\pr}$, say $\de D_0 \cap \Delta_f^{P^\pr} = \varnothing$. Some point of the $(\mathrm{BCS} \cap T_0) \subset D_0$ lies on the rdwh $\vphantom{\Delta}_m\Delta_f^P$ and some other such point lies on the rdwh $\Delta_i^{P^{\pr \pr}}$. So if $\de D_0 \cap \Delta_i^{P^\pr} = \varnothing$, then there exists a scc $K$ on $D_0 \cap \Delta_i^{P^\pr}$ that bounds a subdisk $D_K$ of $D_0$ containing one of those BCS points. Therefore, such a $D_K$ intersects a plane distance $d$ from its boundary. The assumption that the $area^+$ of $\Delta_f^{P^\pr} \leq a$ then completes the hypotheses of the bag lemma. So if $\de D_0 \cap \Delta_f^{P^\pr} = \varnothing$, then we can conclude $(a,d)$-stretching.
	
	So suppose $\de D_0$ intersects both $\Delta_i^{P^\pr}$ and $\Delta_f^{P^\pr}$. In this case, we will have the hypotheses of the J-lemma, namely: 
	
	\begin{enumerate}
	    \item There exists an arc $\alpha \subset \de D_0$ with endpoints on $\Delta_i^{P^\pr}$ and $\Delta_f^{P^\pr}$ respectively as pictured (monotonely) in Figure \ref{fig:PQ}. 
	    \item Since $T_s$ has $rl =2$, we can select a longitude of $T_s$, $\beta$, such that $\beta \cap P = \varnothing$, $\beta \cap \vphantom{\Delta}_m\Delta_i^Q = \varnothing$, and $\beta = \de E_0$, where $E_0$ is a disk in $T_m$ (see Figure \ref{fig:finite_approx}(c)). Notice that  $\alpha$ has $\pm 1$ ``linking'' with $T_s$ (``linking'' is defined when $\alpha$ is closed with a (any) arc within the dwh of $P^\pr \cap T_m$ that contains the endpoints of $\alpha$).
	    \item The rdwh $\vphantom{\Delta}_m\Delta_f^P$ of $T_m$ has $area^+ < a$. 
	    \item The distance between $P$ and $P^\pr$ is greater than $d$.
	\end{enumerate}
	
	Therefore, the J-lemma concludes that $I^h$ has $(a,d)$-stretching.  The final point is to restore the generality lost by referencing the proof to the special case of a 4-$f$, spanning 3 planes ($P$, $Q$, and $R$) as in Figure \ref{fig:6_strings}. The only point worthy of attention for the general $n_i$-$f$ case is how to cut off on planes the ``primordial'' $E$ to an $E_0$ dual to $\alpha$ and hence suitable for an application of the J-lemma. The issue is already visible in Figure \ref{fig:planes}(b), where $T_\sigma$, the daughter of a folded 4-$f$ is not disjoint from $P_1$, but meets it in the \emph{lower left} portion of the figure. However, the $\alpha$-arc, which would lie in the \emph{upper left} portion, is separated from $P_1 \cap E$ by some $\text{rdwh}^+$ of $T_\sigma \cap P_1$, so the previous innermost circles argument still constructs the dual $E_0$ from $E$. This case is now general; the $E_0$ dual to $\alpha$ can always be obtained by cutting off $E$ on $\text{rdwh}^+$ on $T_\sigma$.
\end{proof}

\begin{figure}[ht]
	\centering
	\input{Inserts/Fig2.11.tex}
	\caption{}\label{fig:PQ}
\end{figure}

Having completed the basic lemmas, here is the logic the proof will follow. Fix a conjugate $I^h$ of the Bing involution to study. Consider a sequence $\{n_i\}$ of positive integers approaching infinity, $i = 1,2,3,\dots$, each with its family $\mathbb{P}_{n_i+1}$ of $n_i+1$ \emph{parallel planes}. For each $n_i$ pass to the induced ${n_i}$-$f$ sBt, and then to the $k$th descendants sBt inside them, $k = 0,1,2,3,\dots$. Denote by $\{T_{\sigma^j_{n_i}}\}$ the collection of final $n_i$'s with respect to $\mathbb{P}_{n_i+1}$. $\sigma_{n_i}$ is a bit string of a final $n_i$ and the index $j$ picks out one of these. For each $j$, adding dots $\{T_{\sigma^j_{n_i} \dots} \}$ denotes the descendants, as well, of the final $n_i$ $T_{\sigma^j_{n_i}}$. In terms of $\sigma$, the dots just mean add arbitrary further bits to the right of $\sigma$. We have shown (Lemma \ref{lm:final_k}) that these families are exponentially large in $n_i$ (and also $k$ for the number of $k$th-descendents). We use these huge families to select for each $n_i$ and $k$ a final $n_i$ (${n_i}$-$f$), $T_{\sigma^j_{n_i}}$, if $k = 0$, or one of its descendants if $k \geq 1$, with a favorably small cross sectional area $\leq a$, $a$ exponentially small in $(n_i + k)$ ($a$ as in Proposition \ref{prop:final_4}). The notion of \emph{residual} in rdwh is crucial in the coming area estimate: rdwh of various ${n_i}$-$f$'s cannot overlap, so they must divvy up the total area, $\pi$, of any given plane.

Having gained control of the $a$ variable, Proposition \ref{prop:final_4} presents an alternative: either the $\alpha$-arc has \emph{deep feet}, i.e.\ both endpoints lie on the distant plane, $P^\pr$ in the proposition, one in $\Delta_i^{P^\pr}$ and the other in $\Delta_f^{P^\pr}$ or Proposition \ref{prop:final_4} tells us $T_\sigma$ (written $T$ in the proposition) has its daughters' $(T_{\sigma 0}$ and $T_{\sigma 1}$) turning points minimally displaced from its own.

Let us look ahead, perhaps cryptically, to the \emph{endgame} and then explain in detail. In the initial alternative, an exponential relation between $a$ and $d$ is observed at scale $\epsilon \approx \frac{1}{n_i}$. In the second case, we proceed to the \emph{endgame} where we see that overly-small, called \emph{timid} in the text, turning point displacements of the descendants $\{T_{\sigma^j_{n_i} \dots}\}$ are incompatible with the presumed shrinking of the Bing decomposition $\mathcal{D}$. In eliminating the possibility of all timid turning points, the J-lemma and much of Proposition \ref{prop:final_4}'s reasoning is recycled to find a new lower bound for $d$ (denoted $d_k$) in $(a_k, d_k)$-stretching at scale $\epsilon \approx d_k = \frac{1}{(n_i/2)L_j(n_i/2)}$ for some $j=0,1,2,\dots$, $L_j \coloneqq L_j^1$, and $L_j$ the function defined in Theorem \ref{paper-thm}. Because of the logs in the denominator, the endgame lower bound on $d$ is slightly weaker than our initial one and results only in a nearly exponential lower bound to the inherent modulus of continuity (imoc). The bound is weaker because while the first alternative (the $\alpha$ in Proposition \ref{prop:final_4} has deep feet) produces $d = \frac{1}{n_i}$, the endgame must be satisfied with this slightly smaller $d_k$. New ``planes'' added to $\mathbb{P}_{n_i+1}$ in the endgame all must fit into a final $n_i$ of length $\approx 1$ (see Figure \ref{fig:inf-planes}), dictating the more rapid convergence of $\{d_k\}$ when viewed as a series. Thus, for some sequence of $\epsilon \ra 0$ we locate some case, either through Proposition \ref{prop:final_4} or the endgame, where $(a,d)$-stretching yields an exponential or nearly exponential relationship between $\delta \approx \sqrt{a}$ and $\epsilon \approx d$. Of course we express all this through the function $\delta^{-1}(\epsilon^{-1})$.

Having introduced the idea of \emph{turning points} and the \emph{displacement} it is convenient to summarize Propositions \ref{prop:final_4} as taking a ${n_i}$-$f$, $T_m$, as input and yielding at least one of the following three outputs:

\subsubsection{Trichotomy}\label{sec:tri}

\begin{enumerate}
    \item $(a,d)$-stretching established via (the corollary to) the bag lemma \emph{near} one of its daughters, say $T$, of $T_m$.
    \item $(a,d)$-stretching established via the J-lemma \emph{near} $T_m$, or
    \item A \emph{rather precise} monotone picture (see Figure \ref{fig:precise_pic}(a)) for the turning points of the granddaughters $(T_0, T_1)$ of $T_m$ inside $T$.
\end{enumerate}

Recall $(a,d)$-stretching involves finding a point $y \in \Delta^+$, where $\Delta^+$ is the full disk ``completion'' of a dwh $\Delta$ in $T$ (case 1) or $T_m$ (case 2). Because $y$ may lie in $\Delta^+ \setminus \Delta$ we use the word ``near'' to record the relationship $y$ has to the sBt ($T$ or $T_m$). It is not really a metrical notion; it is used to convey the above relationship.

The words \emph{near} and \emph{rather precise} have technical meanings. We work in the monotone picture for $T$, in which extra wiggles across planes are suppressed. Figure \ref{fig:precise_pic}(a) shows no displacement of turning points and is called \emph{precise}. Figure \ref{fig:precise_pic}(b) shows the ``rather precise'' case defined below. By definition, \emph{rather precise} turning points will mean that when $T_0$ (and $T_1$) is drawn in the monotone picture w.r.t.\ planes $(P, P^\pr, P^{\pr\pr}, Q^{\pr\pr}, Q^\pr, Q)$, we see that $T_0$, up to local wiggles, essentially meets $P^{\pr\pr}$ and $Q^{\pr\pr}$ once with each orientation (and is allowed to meet $P$, $P^\pr$, $Q^\pr$, and $Q$ arbitrarily).

\begin{figure}[p]
    \centering
    \input{Inserts/Fig2.12.tex}
    \caption{}\label{fig:precise_pic}
\end{figure}

\subsection*{The endgame idea}
\begin{figure}[ht]
	\centering
	\input{Inserts/Fig2.13.tex}
	\caption{}\label{fig:useful_arc}
\end{figure}

Any large displacement of turning points creates an $\alpha$-arc with two \emph{deep feet} on a plane $P_k^\pr$ separated by a plane $P_k^{\pr \pr}$  from $\beta$, the longitude of the sister sBt. This $\alpha$-arc sets up an application of the J-lemma to locate $(a_k,d_k)$-stretching.

The long arc lemma might be better called the ``hooked arc lemma,'' in its applications, since 0-displacement produces a long arc but one that will \emph{not} ``hook''\footnote{The term ``hook'' describes non-monotone progress of a daughter's rdwh through the cyclically ordered family of its mothers rdwh.} at its own scale (it will hook at previous scales) in that it does \emph{not} proceed to clasp its sister and then return in the opposite direction, as measured along the $S^1$-coordinate of the mother. An undisplaced $\alpha$ merely proceeds between the turning points of the previous stage. Displacement lengthens one end of one daughter and shrinks the corresponding end of the other daughter. The one now lengthened by some amount $d_k$ is retained because it implies the useful $\alpha$ arc (see Figure \ref{fig:useful_arc}); it allows an application of the J-lemma at the mother's level.

We will show how to extend Proposition \ref{prop:final_4} to the daughters of a final $n_i$, ${n_i}$-$f$, and beyond that yield either $(a_k,d_k)$-stretching or displacement restrictions on the successive daughters.

Let us now explain both the geometry and calculus sketched above. A final $n_i$ has, in particular, $n_i$ index transitions representing passages between packets of planes of $\mathbb{P}_{n_i+1}$ which were organized to have total thickness 1, and so are at spacing $\frac{1}{n_i}$. Thus, the total \emph{combinatorial length}, consisting only of $x$-distance between transitional rdwh is $n_i(\frac{1}{n_i}) = 1$. A final $n_i$ might be \emph{monotone} and pass through approximately $\frac{n_i}{2}$ planes in one direction, and then return. On the other hand, it might wiggle $n_i$ times back and forth between two adjacent planes, or do something in between these two extremes.

Regardless of whether $T_m$ a final $n_i$ is monotone or folded, we can use its cyclically ordered $n_i$ index transitions between rdwh to extract one representative rdwh from each equal-index batch to play exactly the same role as the original elements of $\mathbb{P}_{n_i+1}$ in defining $r\ell$ for descendents of $T_m$. Since $r\ell(T_m) = n_i$ and the consecutive $x$-separation of the distinct indexed rdwh is $\frac{1}{n_i}$, $T_m$, has combinatorial length 1. We now need to introduce an infinite family of ``planes'' inside $T_m$ for use in measuring the displacement of turning points as we pass from each generation to the next inside $T_m$. These ``planes'' are in actuality pairs of rdwh, paired outward from the two turning points of $T_m$ in its mother. Using combinatorial length$(T_m) = 1$ and a conventional distance of $\frac{1}{n_i}$ between representative rdwh of locally constant index, we supply an infinite family of \emph{new planes} starting from turning points$(T_m)$ inward so that the total length of the intervening chambers converge to less than $\frac{1}{2}$, leaving at least some small gap in the middle. To retain the precisely exponential relationship between $\epsilon$ and $\delta$, we would need the spacing of these new planes to decay harmonically: the $k$th new plane separated from its predecessor by constant $\frac{1}{k}$. Unfortunately, this won't work, the harmonic series diverges so these new planes won't fit around a circle of length, or as we view it, across a span of $\frac{1}{2}$. This is where the poly logs come in. They arise when looking for the minimal ``compression'' of the harmonic sequence to one that converges to a constant. Let us do a small calculus exercise to see how this plays out. Setting $u = \log x$ below:
\begin{gather*}
    \int_n^\infty \frac{1}{x\log^{(1+\alpha)}x}\ dx = \int_{\log n}^\infty u^{-1-\alpha}\ du = -\frac{1}{\alpha}u^{-\alpha}\Big\vert_{\log n}^\infty = \frac{1}{\alpha}(\log n)^{-\alpha} \searrow 0, \text{ as } n \ra \infty \\[0.25em]
    \int_n^\infty \frac{1}{x \log x (\log \log x)^{(1+\alpha)}}\ dx = \int_{\log n}^\infty u^{-1}(\log u)^{-1-\alpha} du = \int_{\log \log n}^\infty v^{-1-\alpha} dv = \frac{1}{\alpha} (\log \log n)^{-\alpha} \searrow 0 \\
    \vdots \\
    \int_{e^j}^\infty \frac{1}{x \log x \log \log x \cdots (\underbrace{\log \cdots \log (x)}_{(j+1)\text{-times}})^{(1+\alpha)}}\ dx \searrow 0
\end{gather*}

These identities motivate the definition of our function $L_j^\alpha(x)$. The integrals dominate corresponding $k$-indexed sums offset by 1. A factor of $\lceil e^j \rceil$ in $L_j^\alpha$ allows us to start all sums at $k=0$, and absorbs the difference between using $e$, the natural base for exponentials, and the base $\sqrt{2}$ associated with lengths seen in cross sections of daughters versus their mothers.

For all $j \geq 0, \alpha > 0$ we can begin our sequence $\{n_i\}$, $n_i > n$, large enough so that the corresponding integral above converges to a value, say $< \frac{1}{10}$. This will allow the now infinite ``plane'' family $\mathbb{P}^\pr$, about to be constructed to have two disjoint copies sitting in the final $n_i$ of length $< \frac{1}{10}$. The climax of the endgame is that if after each generation within $T_m$ we never see an opportunity to apply the J-lemma to conclude nearly\footnote{``Nearly'' refers to the insertion of the function $L_j^\alpha$.} exponential moc, then there is a branch through the tree of descendents where the integrated displacement of the two turning points is inadequate for shrinking. The ``circle'' $T_m$ has combinatorial length one, with initial end points distance $\frac{1}{2}$ apart and neither can move more than $\frac{1}{10}$, without creating $(a,d)$-stretching at some step. This could contradict the shrinking of the decomposition $\mathcal{D}$ to points.

What remains is to translate over to the endgame the topological apparatus of Proposition \ref{prop:final_4} and also to give a careful account of cross-sectional $\text{areas}^+$ as we descend the tree of daughters. Exponential decay of cross-sectional $\text{areas}^+$ as a function of generation follows if the $\text{areas}^+$ divide more or less equally among daughters. We must show that such symmetry is not needed---there are enough descendants to sort through to find many with the required condition on $\operatorname{area}^+$.

We now explain the countable family of ``new planes,'' actually pairs of rdwh to be added to $T_0$ (or $T_1$ assuming $T_0$ was rather precise). Recall if $T_0$ and $T_1$ were both imprecise (the opposite of rather precise), then $(a,d)$-stretching is produced by one of the three alternatives listed in the trichotomy (\ref{sec:tri}). In this construction the log-like function $L_j^\alpha$, $L$ for short, is fixed and $n_i$ is chosen large enough so that $\sum_{k=1}^\infty \frac{1}{(n_i+k)L(n_i+k)} < \frac{1}{10}$, so there will be plenty of room inside any of the $n_i$-$f$ for our construction. From here, to avoid clutter, we drop the subscript, writing $n$ for $n_i$.

$T_0$ being the daughter of a $n-f$ has $r\ell(T_0) = n-2$. Because we may assume both $T_0$ and $T_1$ are rather precise, there will be $\frac{n-2}{2}$ $\frac{1}{n}$-sized inter-plane chambers in $T_0$, each of the two directions leading from the lower to the upper turning point of $T_0$ in $T$, where $T_0$ clasps tightly with $T_1$. To establish notation let $\Delta_i^-$ and $\Delta_f^-$ be initial and final rdwh at the lower turning point of $T_0$ where clasping with $T_1$ occurs and $\Delta_i^+$ and $\Delta_f^+$ be initial and final rdwh at the upper clasping with $T_1$. The ``new planes'' are rdwh located on the continuum of planes parallel to $\mathbb{P}_{n+1}$ located between the elements of $\mathbb{P}_{n+1}$ that $T_0$ meets. The first 4 ``new planes,'' that is, 8 new rdwh, are defined by starting at $\Delta_i^-$ ($\Delta_f^-$) and selecting any rdwh in the two planes at successive distances $\frac{1}{(n+1)L(n+1)}$ into the packet of planes from $\Delta_i^-$ ($\Delta_f^-$), and similarly spaced from the other end, $\Delta_i^+$ ($\Delta_f^+$). See Figure \ref{fig:new-planes}. This is similar to our earlier construction of $P^\pr$ and $P^{\pr\pr}$ near $P$. Next, add 8 more rdwh on parallel planes at additional spacings of $\frac{1}{(n+2)L(n+2)}$ and $\frac{2}{(n+2)L(n+2)}$. Continue in this way until the next group of 8 rdwh will not fit within the $\frac{1}{n}$-thick slab between adjacent planes of $\mathbb{P}_{n+1}$. Since $n$ is assumed very large, we are free to assume that $L(n) > 500$, so that we can repeat the procedure $> 100$ times before we are too close to the end of our $\frac{1}{n}$-thick slab.

When we \emph{do} come close to the end of this $\frac{1}{n}$-thick slab, we leave off adding ``new planes'' but continue around $T_0$ in the same orientation until we arrive at another plane $P$ of $\mathbb{P}_{n+1}$ and into the initial portion of a new segment of $T_0$, essentially spanning between $P$ and some adjacent plane $Q$ of $\mathbb{P}_{n+1}$. This segment leaves $P$ on some final rdwh which is added to our growing collection of ``new planes.'' From this fresh starting point, we add 100 or more groups of two new planes (at this point 4 separate groups will be growing two from the bottom clasp up and two from the top clasp down) until encountering the next plane of $\mathbb{P}_{n+1}$. At this time we pause, follow $T_0$ until we recommence at the next essential segment of $T_0$ joining planes of $\mathbb{P}_{n+1}$ and there also add another 100 or more planes to the growing strings. Continue constructing all four strings of ``new planes'' in this fashion.

Recall the total length (measured in the $x$-coordinate only) of essential segments in both paths through $T_0$ between the lower and upper clasps is $\approx \frac{1}{5}$. We have chosen $n$ so large that less than 1\% of such length is ``wasted'' at unused ends of segments. Since we also chose $n$ large enough that the sum $\sum_{k=1}^\infty \frac{1}{(n+k)L(n+k)} < \frac{1}{10}$ we have ample room for requisite two copies of that sum, from both ends leading to 4 total copies, even allowing for a 1\% inefficiency, to fit into $T_0$ and still leave a gap in the middle since $4(\frac{1}{10}) < (0.99)\frac{1}{2}$. Actually, since there are two strings of new planes between two lower and upper clasps, there are a total of 8 copies of this sum in $T_0$: 2 corresponding to 2-strings, 2 corresponding to the two ends of each string, and 2 corresponding to the double-chamber structure essential to Proposition \ref{prop:final_4}, which is similarly crucial in the endgame.

Figure \ref{fig:new-planes} summarizes the above construction of new planes pictorially whereas Figure \ref{fig:inf-planes} summarizes the numerics along one (either) of the two directions.

\begin{figure}[htp]
    \centering
    \input{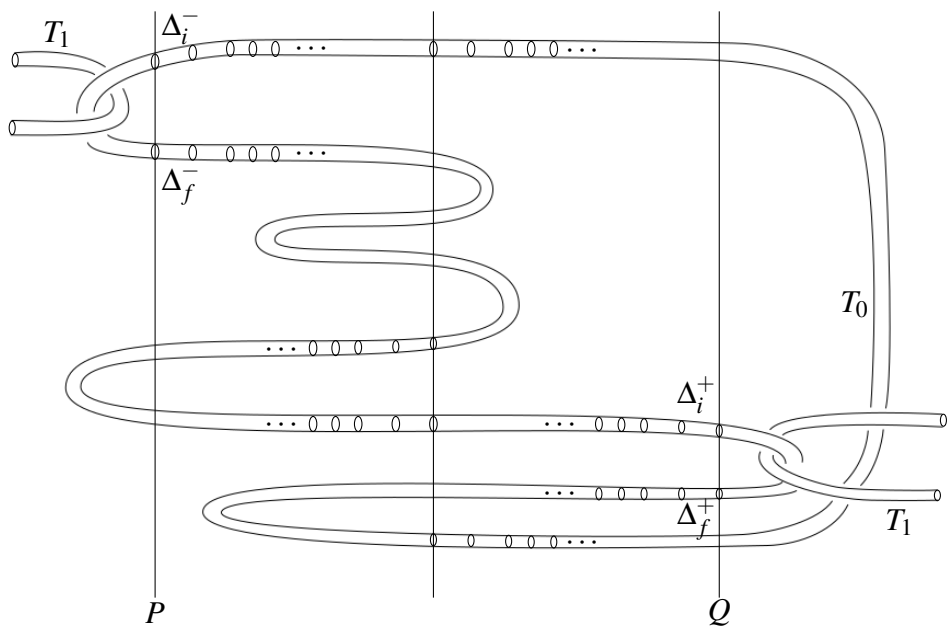}
    \caption{$r\ell(T_0) = n-2=8$. All ovals and dots indicate a ``new plane'' rdwh.}
    \label{fig:new-planes}
\end{figure}

\begin{figure}[ht]
	\centering
	\begin{tikzpicture}[scale=1.05]
    \draw (-7,1.3) -- (-7,-1.3);
    \draw (-5.5,1.3) -- (-5.5,-1.3);
    \draw (-4,1.3) -- (-4,-1.3);
    \draw (-3,1.3) -- (-3,-1.3);
    \draw (-2,1.3) -- (-2,-1.3);
    \draw (-1.75,1.3) -- (-1.75,-1.3);
    \draw (-1.5,1.3) -- (-1.5,-1.3);
    \node at (-1,0) {$\dots$};
    \draw (7,1.3) -- (7,-1.3);
    \draw (5.5,1.3) -- (5.5,-1.3);
    \draw (4,1.3) -- (4,-1.3);
    \draw (3,1.3) -- (3,-1.3);
    \draw (2,1.3) -- (2,-1.3);
    \draw (1.75,1.3) -- (1.75,-1.3);
    \draw (1.5,1.3) -- (1.5,-1.3);
    \node at (1,0) {$\dots$};
    \draw (-0.5,-1.3) -- (-0.5,1.3);
    \draw (0.5,-1.3) -- (0.5,1.3);
    \node at (0,0) {\footnotesize{$> 0.09$}};
    \draw[<->] (-0.4,-0.4) -- (0.4,-0.4);
    \node at (-0.5,1.6) {$\lbar{P}$};
    \node at (0.5,1.6) {$\lbar{Q}$};
    
    \node at (-5.5,1.8) {$\frac{1}{(n+1)L(n+1)}$};
    \draw[->] (-5.9,1.4) -- (-6.2,1);
    \draw[->] (-5.1,1.4) -- (-4.8,1);
    \draw[<->] (-6.9,0.8) -- (-5.6,0.8);
    \draw[<->] (-5.4,0.8) -- (-4.1,0.8);
    \node at (-3,1.8) {$\frac{1}{(n+2)L(n+2)}$};
    \draw[->] (-3.4,1.4) -- (-3.7,1);
    \draw[->] (-2.6,1.4) -- (-2.3,1);
    \draw[<->] (-3.9,0.8) -- (-3.1,0.8);
    \draw[<->] (-2.9,0.8) -- (-2.1,0.8);
    \node at (5.5,1.8) {$\frac{1}{(n+1)L(n+1)}$};
    \draw[->] (5.9,1.4) -- (6.2,1);
    \draw[->] (5.1,1.4) -- (4.8,1);
    \draw[<->] (3.9,0.8) -- (3.1,0.8);
    \draw[<->] (2.9,0.8) -- (2.1,0.8);
    \node at (3,1.8) {$\frac{1}{(n+2)L(n+2)}$};
    \draw[->] (3.4,1.4) -- (3.7,1);
    \draw[->] (2.6,1.4) -- (2.3,1);
    \draw[<->] (6.9,0.8) -- (5.6,0.8);
    \draw[<->] (5.4,0.8) -- (4.1,0.8);
    
    \draw[decorate,decoration={brace,amplitude=5pt}] (7,-1.5) -- (-7,-1.5) node[midway,yshift=-15pt] {$\frac{1}{2}$};
    \node at (-7,-1.9) {$P$};
    \node at (-5.5,-1.9) {$P^\pr$};
    \node at (-4,-1.9) {$P^{\pr\pr}$};
    \node[rotate=90] at (-4,-2.3) {$=$};
    \node at (-4,-2.75) {$P_1$};
    \node at (-3,-1.95) {$P_1^\pr$};
    \node at (-2.1,-1.95) {$P_1^{\pr\pr}$};
    \node[rotate=90] at (-2.1,-2.35) {$=$};
    \node at (-2.1,-2.75) {$P_2$};
    \node at (-1.7,-1.95) {$P_2^\pr$};
    \node at (-1.3,-1.95) {$P_2^{\pr\pr}$};
    \node at (7,-1.9) {$Q$};
    \node at (5.5,-1.9) {$Q^\pr$};
    \node at (4,-1.9) {$Q^{\pr\pr}$};
    \node[rotate=90] at (4,-2.3) {$=$};
    \node at (4,-2.75) {$Q_1$};
    \node at (3,-1.95) {$Q_1^\pr$};
    \node at (2.1,-1.95) {$Q_1^{\pr\pr}$};
    \node[rotate=90] at (2.1,-2.35) {$=$};
    \node at (2.1,-2.75) {$Q_2$};
    \node at (1.65,-1.95) {$Q_2^\pr$};
    \node at (1.2,-1.95) {$Q_2^{\pr\pr}$};
\end{tikzpicture}
	\caption{$L \coloneqq L_j^\alpha$, some $\alpha > 0$, $j = 0,1,2,\dots$, and $n_i$ sufficiently large that the slabs all fit in length $\frac{1}{2}$.}\label{fig:inf-planes}
\end{figure}
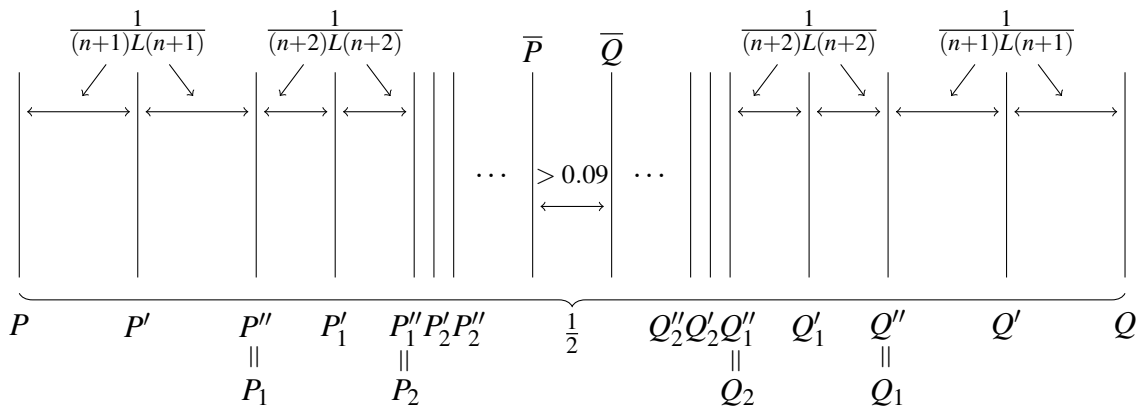

The new planes just located allow the trichotomy of Proposition \ref{prop:final_4} to be iterated: (1) $(a,d)$-stretching near $T$ (or $T_s)$ from the bag lemma, (2) $(a,d)$-stretching near $T_m$ by J-lemma, or (3) the turning points of $(T_0,T_1)$ in $T$ are rather precise. So, $(a,d)$-stretching is located with $\epsilon \approx \frac{1}{n_i}$ or $(T_0,T_1) \subset T$ is rather precise. The beauty of the rather precise condition is that the $E$ disk boundary longitude $T_1 = \beta$ has its boundary $\beta$ well-positioned to apply the J-lemma, now one step deeper in the tree of daughters than was used for case (2) of Proposition \ref{prop:final_4}. Recall that interior intersections of $E$ with a plane it should not cross ($P^\pr$ in Figure \ref{fig:6_strings}) can be removed via an innermost circle argument, but intersections of $\de E = \beta$ with that plane would be fatal for a J-lemma application. The rather precise positioning $(T_0,T_1) \subset T$ implies $\beta \cap P^\pr = \varnothing$, allowing a J-lemma application one generation down (and similarly reversing the roles of $T_0$ and $T_1$). So, we see that in case (3) where we have not yet seen $(a,d)$-stretching, we at least are set up to look for it again, by the machinery of Proposition \ref{prop:final_4}, in the next generation. Doing so requires inspecting how $(T_{00},T_{01}) \subset T_0$ and $(T_{10},T_{11}) \subset T_1$, sit w.r.t.\ the first 4 new planes (8 rdwh added in our construction). Repeating the trichotomy we will either find $(a_1,d_1)$-stretching near, say, $T_0$ using the bag lemma, $(a_1,d_1)$-stretching near $T$ using the J-lemma, or learn that $(T_{00},T_{01}) \subset T_0$ and $(T_{10},T_{11}) \subset T_1$ are both rather precise. Again, if we are forced into case three, the rather precise condition implies well-located $E$ disks, now for doubly indexed $T$'s and we may again apply the J-lemma in this generation to realize the same trichotomy, using the next 8 new planes. The calculus exercise permitted us to fit in an unlimited number of these groups of 8 along $T$'s winding course (Figure \ref{fig:new-planes}) and still leave a gap between new planes coming up from the bottom of the slab and new planes coming down from the top. But such a gap between turning points as stage depth approaches infinity contradicts the shrinking of the Bing decomposition $\mathcal{D}$ to points.

Let us supply a little more notation and a few diagrams to help follow this argument in detail. We already referred to our attempt to locate $(a_1,d_1)$-stretching using the first batch of 4 new planes. In general, $(a_k,d_k)$-stretching is what would be found if alternatives (1) or (2) apply at generation $k$ below $T$ using the $k$th group of 4 planes. Clearly, the relevant length is $d_k = \frac{1}{(n+k)L(n+k)}$, the inter-plane spacing at that stage. After clarifying our notation, the final point will be to estimate $a_k$ in terms of $n$ and $k$.

In Proposition \ref{prop:final_4}, 6 planes measure $x$-axis extent: $P$, $P^\pr$, $P^{\pr\pr}$, $Q^{\pr\pr}$, $Q^\pr$, and $Q$. As we proceed to deeper stages we add 4 new planes (or occasionally 6 at the start of a new slab, in which case the formal equalities written below do not hold) at each step, according to the pattern:
\[
	P, P^\pr, P^{\pr\pr}=P_1, P_1^\pr, P_1^{\pr\pr}=P_2, P_2^\pr, P_2^{\pr\pr}, \dots, Q_2^{\pr\pr}, Q_2^\pr, Q_2=Q_1^{\pr\pr}, Q_1^\pr, Q_1=Q^{\pr\pr}, Q^\pr, Q
\]

For symmetry, use the notation $P^{\pr\pr} = P_1$, $P_1^{\pr\pr} = P_2$, ... , $Q_1^{\pr\pr} = Q_2$, $Q^{\pr\pr} = Q_1$ when applicable. Looked at this way, at each stage we add 6 planes but the outer two usually agree with the inner two of the previous stage. So usually only 4 of the 6 planes are new. We will now measure $x$-axis extent. Similarly to the definition of $r\ell$ for the original plane family $\mathbb{P}_{n_i+1}$, by at each stage looking, at any time, only at rdwh for the sBt and a finite initial segment of $\{\text{new planes}\}$. Recall the discussion associated with the definition of rdwh, which assures us that the definition of rdwh's is stable with respect to adding planes and descendant tori. Again, we use the shorthand $T_\sigma \cap \text{plane} = \varnothing$ to mean the intersection contains no rdwh.

To summarize, the final bit of the proof leads us down the tree of daughters, stopping successfully, if case (1) or case (2) of the trichotomy (\ref{sec:tri}) occurs, otherwise continuing always to case (3). Inductively, the structure of sister tori allows us to continue the method of Proposition \ref{prop:final_4} to find case (1) or (2) stretching, or if we witness a very limited displacement, case (3).

Assuming there is no $(a,d)$-stretching at the previous $(T_m; T_\sigma)$ level, Figure \ref{fig:literal_pic}, below, is a monotone picture, i.e.\ certain palindromic repetitions have been edited out for visual convenience. We see in $\tld{T}_\sigma$ (written $\tld{T}$ going forward) with numerous rdwh of $T$ on $P$, $P^\pr$, and $P^{\pr\pr}$ edited out. Refer to Figure \ref{fig:6_strings} to decode the labeling of Figures \ref{fig:literal_pic} and \ref{fig:monotone_pic}. Since edited length $\leq$ length, the simpler monotone picture is still useful for real-axis-length lower bounds.

\begin{figure}[ht]
	\centering
	\input{Inserts/Fig2.16.tex}
	\caption{}\label{fig:literal_pic}
\end{figure}

Proposition \ref{prop:final_4} says no $(a,d)$-stretching $\implies$ lift$(T_0)$ meets at most one $P^\pr$ lift, that is, no deep feet, and the lift$(T_1)$ meets at most one $Q^\pr$ lift. By our conventions, $T_0$ meets $P$ (in $\vphantom{\Delta}_m\Delta_f^P$) and $T_1$ meets $Q$, and similarly, by convention, we will assume $T_{00}$ meets $\Delta_f^{P^{\pr\pr}}$. Figure \ref{fig:monotone_pic} is the monotone picture one stage deeper.

\begin{figure}[ht]
	\centering
	\input{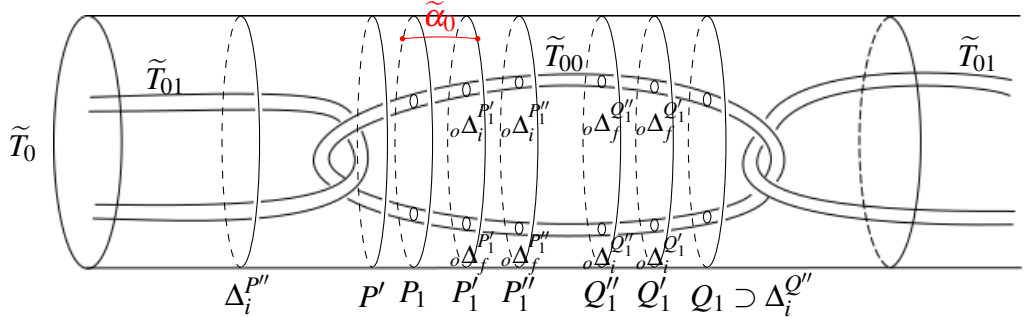}
	\caption{New planes inserted at next stage if $(a,d)$-stretching is not discovered by Proposition \ref{prop:final_4}, nor $(a_1, d_1)$-stretching detected in Figure \ref{fig:literal_pic} with the immediate daughters of the 4-$f$.}\label{fig:monotone_pic}
\end{figure}

In the first paragraph of the proof of Proposition \ref{prop:final_4} we discussed how loose the clasps of $T_0$ and $T_1$ can occur without that looseness producing the hypotheses of Proposition \ref{prop:final_4}, and, therefore, producing $(a,d)$-stretching. Our conclusion there, translated to the monotone picture above is that the clasps there are completely tight and rather precise (as drawn): the left lift of $T_1$ has rdwh on $P^\pr$ but not $P^{\pr \pr}$ and the right lift of $T_1$ has rdwh on $Q^\pr$ but not $Q^{\pr \pr}$. Otherwise $(a,d)$-stretching would already occur at the $n$-$f$ level. This tightness means that the primordial $E$ (from the unshrunk Bt) with $\de E = \beta = \operatorname{longitude}(T_s)$, $T_s$ being the sibling of $T$, has the requisite disjointness from $\Delta_f^{P^{\pr \pr}}$ and from $\Delta_i^{Q^{\pr \pr}}$ to serve as input for an application of the J-lemma using $\alpha$ with deep feet on $P_1^\pr$. Notice that as drawn in Figure \ref{fig:literal_pic} (and consistent with Figure \ref{fig:6_strings}), $\tld{\alpha}$ does not reach as far right in Figure \ref{fig:6_strings}, or as far left in Figure \ref{fig:literal_pic}, as $\Delta_i^{P^\pr}$. If it did, using $\beta$ we could successfully apply the J-lemma and conclude $(a_1,d_1)$-stretching. If it does not, we must proceed to a daughter $T_{00}$ or $T_{01}$ to continue the search for $(a_2,d_2)$-stretching.

Associated to $T_{00}$ is a shadowing $\alpha_0$-arc (as in the long arc lemma) on $\de T_0$. Exactly as in the proof of Proposition\ \ref{prop:final_4}, but now, to avoid the J-lemma producing $(a_1,d_1)$-stretching, the $\alpha_0$-arc must \emph{not} have both endpoints (deep feet) on $P_1^\pr$ (i.e.\ should not reach that far left in Figure \ref{fig:monotone_pic}). $a_1$ is taken to be the maximum of $\operatorname{area}^+$ of the next arising 10 rdwh, 8 of which are new rdwh's of $T_0$ denoted wth a back script 0: $\Delta_f^{P^{\pr\pr}} = \Delta_f^{P_1}$, $\vphantom{\Delta}_0\Delta_i^{P_1^\pr}$, $\vphantom{\Delta}_0\Delta_i^{P_1^{\pr\pr}}$, $\vphantom{\Delta}_0\Delta_i^{Q_1^{\pr\pr}}$, $\vphantom{\Delta}_0\Delta_i^{Q_1^\pr}$, $\Delta_i^{Q^{\pr\pr}} = \Delta_i^{Q_1}$, $\vphantom{\Delta}_0\Delta_f^{Q_1^\pr}$, $\vphantom{\Delta}_0\Delta_f^{Q_1^{\pr\pr}}$, $\vphantom{\Delta}_0\Delta_f^{P_1^{\pr\pr}}$, and $\vphantom{\Delta}_0\Delta_f^{P_1^\pr}$. $d_1$ is the spacing between the next tranche of planes $P_1 \leftrightarrow P_1^\pr$, $P_1^\pr \leftrightarrow P_1^{\pr\pr}$, $Q_1^{\pr\pr} \leftrightarrow Q_1^\pr$, and $Q_1^\pr \leftrightarrow Q_1$, soon to be set to $d_1 = \frac{1}{(n+1)L(n+1)}$. The key observation is that $\beta = \de E$, $E$ the natural disk spanning $T_1$ (see Figure \ref{fig:finite_approx}(c)) has the requisite disjointness to serve as input to the J-lemma. 

Similarly, there is an $\alpha_1$-arc shadowing $T_{01}$ also on $\de T_0$ and to avoid $(a_1,d_1)$-stretching (from the J-lemma) it cannot have both its endpoints as far left as $Q_1^{\pr\pr}$. The J-lemma will accept as input either $\alpha_0 \subset \de T_0$ and the disk $E$ with $\de E = \beta = \operatorname{longitude}(T_1)$, or the reverse: $\alpha_1 \subset \de T_1$ and the primordial $F$ with $\de F = \beta = \operatorname{longitude}(T_0)$.

The shadowing property (long arc lemma) of the $\alpha$-arcs now implies that none of the four width $d_1$ slabs $\lbar{P_1 P_1^\pr}$, $\lbar{P_1^\pr P_1^{\pr\pr}}$, $\lbar{Q_1^{\pr\pr}Q_1^\pr}$, nor $\lbar{Q_1^\pr Q}$ can be disjoint from both $\alpha$ arcs, ($\alpha_0 \cup \alpha_1$). There are only two combinatorial possibilities for $\alpha_0 \cup \alpha_1$ to lie in $T_0$ and these are pictured in $\tld{T}_0$ in Figure \ref{fig:redblue} below.

\begin{figure}[ht]
	\centering
	\input{Inserts/Fig2.18.tex}
	\caption{}\label{fig:redblue}
\end{figure}

The two cases illustrated in red and blue (resp.) correspond in Figure \ref{fig:6_strings} (there at one deeper level) to shuttling the two clasps of $T_0$ and $T_1$ either to the NW and SE of the central region, as drawn there, or alternatively to the SW and NE. The dotted colored lines in Figure \ref{fig:redblue} correspond to the fact that daughters may not clasp tightly; a loose clasp can extend the range (in one direction) of the overlap of $T_{00}$ and $T_{01}$ and hence of $\alpha_0$ and $\alpha_1$. But, usefully, such loosening has no effect when viewed in the monotone picture, until the clasps become so loose as to violate the rather precise condition. Then a lift of $T_{00}$ or $T_{01}$ crosses an entire 6-plane packet of either $P$ or $Q$-planes (above) leading immediately to $(a_1,d_1)$-stretching by an application of the J-lemma.

Comparing Figures \ref{fig:monotone_pic} and \ref{fig:redblue}, observe that while $T_0$ spans across planes $P^{\pr\pr}$ and $Q^{\pr\pr}$, $T_{00}$ spans across planes $P_1^{\pr\pr}$ and $Q_1^{\pr\pr}$, that is, the data used to compare $(a,d)$-stretching and the turning points of daughters has been replicated at the next stage. At this next stage, the longitude for $T_{01}$ will bound a disk which we might call $E_{01}$ to set up a J-lemma application on $T_{00}$ (and vice versa, a longitude for $T_{00}$ bounds a disk $E_{00}$ setting up the J-lemma on $T_{01}$). The constants $(a,d)$ have updated to $(a_1,d_1)$. Inductively, we may now analyze stretching down the entire tree of descendants of $T_\sigma$ of a final $n_i$-$f$. We will see that avoiding $(a_k, d_k)$-stretching at $k$-stages below the final $n_i$, constrains the successive turning point displacement so much that the descendants of $T_\sigma$ cannot shrink (as they must!) unless the $(a_k, d_k)$-stretching is sufficient to establish a nearly exponential lower bound to the modulus of continuity. Some fraction of descendants must be discarded at each stage for lack of a suitable area estimate. We address this final issue next.

To obtain our modulus of continuity lower bound on $I^h$, for each $\mathbb{P}_{n_i+1}$ (with an initial torus $T_\varnothing$ of $r\ell(T_\varnothing)=2n_i$), we must locate a suitable final $n_i$-$f$ with small $\text{area}^+$ rdwh, and also at each subsequent stage select one of its descendants exhibiting an exponential decay in $\operatorname{area}^+$ of rdwh on the relevant planes. Once we are in the endgame, at any stage $k$, any imprecise torus may be used to find $(a_k,d_k)$-stretching, so we may adopt the policy: whenever we see a pair of rather precise daughters we follow (only) the descendants of the smaller volume one. This guarantees that when the lineage finally produces an imprecise daughter, on which shrinking of \emph{each} component of $\mathcal{D}$ depends, that its volume is $< \frac{1}{2^k} \operatorname{vol}(n_i$-$f)$. Then our presumed ability to vary new plane positions by 10\% implies, via Fubini's theorem, that $\text{area}^+$ of the relevant rdwh also decay exponentially in $k$. So our remaining concern is finding an $n_i$-$f$ with exponentially small cross-sectional $\text{areas}^+$. Logically, given $n_i$ we need only a single small sBt $n_i$-$f$, but to find \emph{one} with small $\text{area}^+$ we need to produce exponentially \emph{many}. Going forward we drop the subscript to $n_i$.

Lemma \ref{lm:final_k} supplies about $2^{\frac{n}{2}}$ $n$-$f$'s. So the average volume of an $n$-$f$ is $\leq 2^{-\frac{n}{2}}$. Recall the rdwh in the planes $P$, $P^\pr$, $P^{\pr\pr}$, $Q^{\pr\pr}$, $Q^\pr$, and $Q$ in the proof of Proposition \ref{prop:final_4}. To find a $n$-$f$ playing the role of $T_m$ whose relevant rdwh have small $\text{area}^+$, first note:

\begin{customlm}{Key Fact}
    Rdwh on any given plane associated to disjoint solid tori are \emph{unnested} so must divvy up the area, call it 1, of each plane. $\sum \operatorname{area}^+ < 1$, the sum being over all rdwh in a given plane. (This fact motivated the definition of rdwh.)
\end{customlm}

\begin{proof}
    Each ``hole'' in a dwh bounds a disk in its 2D torus boundary. The canonical isotopy $\mathcal{I}$ in the definition of rdwh removes all such circles of intersection together with any torus meridians lying in the planar disks they bound. Thus, after $\mathcal{I}$, a residual meridian bounds a disk with no other meridians inside it.
\end{proof}

By Fubini's theorem, every rdwh occurring in the proof of Proposition \ref{prop:final_4} can be assumed to have $\operatorname{area}^+ < 10n\operatorname{vol}(T)$ using the freedom to vary plane locations by 10\%. Since the $\approx 2^{\frac{n}{2}}$ disjoint $n$-$f$'s, $T$ must have average volume $\leq 2^{-\frac{n}{2}}$, some must have volume that small and therefore the $\text{areas}^+$ of the rdwh in the proof must satisfy $\operatorname{area}^+(\mathrm{rdwh}) \leq 10n2^{-\frac{n}{2}}$. The linear factor is irrelevant; we see that it is safe to assume we may input a $n$-$f$ into Proposition \ref{prop:final_4} so that the $\operatorname{area}^+$ in the theorem decays exponentially, like $c^{-n}$, $c>1$.

Now we have shown $(a,d)$-stretching---if it occurs before the endgame---with $a \approx c^{-n}_0$, $c_0 > 1$, and $d \approx \frac{1}{n}$, that is, $\delta < \frac{\sqrt{a}}{\pi}$ and $\epsilon \geq d$. This pre-endgame stretching would be fully exponential, and detected by Proposition \ref{prop:final_4} in outcomes (1) or (2). In the endgame, $a_k \leq c_0^{-n}(\frac{1}{2})^k$ and $d \geq \frac{0.8}{(n+k)L(n+k)}$. So, there is a constant $c>1$ so that for $n$ sufficiently large, and setting $m \coloneqq n+k$, there are points separated by no more than $c^{-m}$ whose $I^h$ images are at least $\frac{0.8}{m L(m)}$ apart. Thus, writing $\delta^{-1}$, as in the statement of Theorem \ref{paper-thm}, as a function of $\epsilon^{-1}$ we have:
\[
    \delta^{-1}\left(\frac{mL(m)}{0.8}\right) \geq c^m = c^{\left[m \frac{L(m)}{0.8} \slash \frac{L(m)}{0.8}\right]} \geq c^{\left[m \frac{L(m)}{0.8} \slash L\left(m \frac{L(m)}{0.8}\right)\right]}
\]
where the final inequality follows from monotonicity of $L$ and $L(m\frac{L(m)}{0.8}) > \frac{L(m)}{0.8}$, for $n$ sufficiently large. Thus, for any function $L = L_j^\alpha$, and $\epsilon > 0$ sufficiently small, $\delta^{-1}(\epsilon^{-1}) \geq c^{\epsilon^{-1}/L(\epsilon^{-1})}$. By the form of the exponent, i.e.\ since $L(\epsilon^{-1})$ approaches infinity as $\epsilon \ra 0$, and the ratios $L_j^\alpha / L_{j+1}^\alpha$ also approach infinity, there is no loss of generality in taking the base of the exponent to be $e$, yielding
\[
    \delta^{-1}(\epsilon^{-1}) \geq e^{\frac{\epsilon^{-1}}{L(\epsilon^{-1}}}
\]
This completes the proof of Theorem \ref{paper-thm}.
\qed

\section{Scholium and Conjectures}
In the usual definition of continuity of a function $h$, $\delta > 0$ is a function of $\epsilon > 0$, but it is convenient to speak of $\delta^{-1}(\epsilon^{-1})$. Lipschitz can then be identified as a linear relationship and H\"{o}lder as a power $>1$, $\delta^{-1}(\epsilon^{-1}) \leq (\epsilon^{-1})^p$, $p>1$. Recall that the functional relationship $\delta^{-1}(\epsilon^{-1})$ is the \emph{modulus of contintuity}, moc$(h)$. In this paper, we studied lower bounds for moc$(h)$, i.e.\ for $\delta^{-1}(\epsilon^{-1})$. We are only interested in the behavior of the moc near infinity, and only obtain information on a selected sequence of $\epsilon^{-1}$ approaching infinity. We use the notation $f(x) \overset{\infty}{\leq} g(x)$, to mean $f(x_i) < g(x_i)$ for a divergent sequence $\{x_i\}$. Note that $f(x) \overset{\infty}{\leq} g(x)$ is not a transitive relationship since different divergent sequences could be used. 

The ``Bing Involution'' $I$ is usually thought of as a small family of very similar involutions depending on exact choices of the shrinking homeomorphisms \cites{bing52,bing88}. Bing's two shrinks and all related shrinks share a feature that implies the inequalities in (\ref{eq:moc1}) and (\ref{eq:moc2}) below, namely, all sBt in those examples have path metrics quasi-isometric to the product of $S^1$ with a round disk. In these specific examples of shrinks, $\delta^{-1}$ must be chosen large enough so that
\begin{equation}
	\delta^{-1}(\epsilon^{-1}) \overset{\infty}{\geq} \sqrt{2}^{\epsilon^{-1}} \label{eq:moc1}
\end{equation}

Also in these examples
\begin{equation}
	\delta^{-1}(\epsilon^{-1}) \overset{\infty}{\leq} c^{\epsilon^{-1}} \text{ for some } c \label{eq:moc2}
\end{equation}
Probably $c$ around 4 is sufficient but we have not tried to optimize the \emph{clasping geometry} of Bing pairs in any detail, which is what the optimal choice of $c$ would involve.

(\ref{eq:moc1}) above is known for Bing's original shrinks and for quasi-isometric ones since at each stage Bing doubling reduces cross-sectional areas by a factor of at least 2, cross-sectional length by at least $\sqrt{2}$, and Bing's original shrink (approximately) rotates points through a distance of $\frac{1}{n}$ at the $n$th stage. Thus, there are points $(x,y)$ at the $n$th stage with dist$(x,y) \leq (\frac{1}{\sqrt{2}})^n$ and dist$(I(x), I(y)) = \frac{1}{n}$.

The main theorem of this paper is that if $I^h$ is any topological conjugate of $I$, then:
\begin{equation}\label{eq:moc3}
	\operatorname{moc}(I^h) \text{ obeys } \delta^{-1}(\epsilon^{-1}) \overset{\infty}{\geq} e^{\left(\frac{\epsilon^{-1}}{L(\epsilon^{-1})}\right)}
\end{equation}
$L$ any of the polylogs of Theorem \ref{paper-thm}. (Of course one should not expect \emph{upper} bounds on moc over the conjugacy class.)

Comparing lines (\ref{eq:moc1}), (\ref{eq:moc2}),  and (\ref{eq:moc3}), we see that our analysis is optimal up to a power $0 < p < 1$ applied to $\epsilon^{-1}$. This motivates the following definition of \emph{growth classes}. (Readers familiar with computer science will notice an analogy with the way distinct complexity classes are defined. There too some ``slop'' in the argument, logarithmic in the CS application, is introduced to account for time wasted as one Turing machine simulates another.)

For monotone increasing functions $f,g: [1,\infty) \ra [1,\infty)$, called \emph{growth functions}, we write $f \overset{\infty}{\prec} g$ iff there is a power $0 < p \leq 1$ such that for all $x$'s, $f(x^p) \leq g(x)$ (also written $g \overset{\infty}{\succ} f$). If $f(x^p) \leq g(x)$ and $g(x^{p^\pr}) \leq h(x)$ then $f(x^{pp^\pr}) \leq h(x)$, so $\overset{\infty}{\prec}$ is a transitive relation. We say $f \equiv g$ iff $f \overset{\infty}{\prec} g$ and $g \overset{\infty}{\prec} f$.

Equivalence classes of growth functions are partially ordered: $[f] \overset{\infty}{\prec} [g]$ by definition iff $f \overset{\infty}{\prec} g$, which does \emph{not} depend on the choice of the representative. Note that there is no maximal equivalence class: given $f$, define $\lbar{f}(x) = f(e^x)$; clearly $f \overset{\infty}{\prec} \lbar{f}$ and $\lbar{f} \not\overset{\infty}{\prec} f$.

\begin{scholium}\label{scholium}
	Given any growth function $f$, there is an orientation reversing involution $J$ of $S^3$ such that the modulus of continuity of any topological conjugate $J^h$ of $J$ grows faster than $f$, i.e.\ $\delta^{-1}_{J^h}(\epsilon^{-1}) \overset{\infty}{\succ} f(\epsilon^{-1})$, and if $f \overset{\infty}{\succ} \exp$, then $J$ itself can be constructed such that $\delta^{-1}_J(\epsilon^{-1})$ is within a constant multiple of $f(\epsilon^{-1})$ for a divergent sequence of values of $\epsilon^{-1}$.
\end{scholium}

\begin{proof}
	The necessary class of involutions $\{J\}$ is built by \emph{ramifying} the Bing doubling process and then shrinking the corresponding ramified decomposition $\mathcal{D}_r$. Ramification was depicted in Figure \ref{fig:bingpair}(b); at the $n_i$th stage, ramification is specified by a positive integer $r_{n_i}$ which counts the number of Bing pairs (twins) placed inside the mother. If $\{r_{n_i}\}$ is chosen to grow rapidly (think of the Acerman or busy beaver functions) then the typical volumes and cross-sectional areas $a_{n_i+k}$ of $n_i$th stage tori will decay rapidly in the plane families constructed in the proof of the main theorem. Taking square root $\sqrt{a_{n_i+k}}$ we obtain a length scale $\delta$ which the proof shows to be stretched out to at least a linear ($x$-axis) separation of $\frac{1}{(n_i+k)L(n_i+k)} \approx \epsilon$. Given any growth function which is \emph{at least} exponential, a ramification function $\{r_{n_i}\}$ can be chosen to closely keep pace (say never deviating by more than a factor of 2 at integer values). From $\{r_{n_i}\}$, build a ramified Bing decomposition $\mathcal{D}_{\{r_{n_i}\}}$ invariant under the standard involution $\tld{I}(x_1,x_2,x_3,x_4) = (x_1,x_2,x_3,-x_4)$ and let $J$ be the induced involution on the shrunk quotient:

	\begin{figure}[!ht]
		\centering
		\begin{tikzpicture}[scale=1.1]
			\node at (0,0) {$S^3$};
			\draw[->] (0.35,0) -- (1.65,0);
			\node at (2,0) {$S^3$};
			\node at (1,0.3) {$\tld{I}$};
			\draw[->] (0,-0.45) -- (0,-1.15);
			\node at (-0.25,-0.75) {\footnotesize{$\pi$}};
			\node at (0,-1.5) {$S^3 \slash \mathcal{D}_{\{r_n\}}$};
			\draw[->] (0.8,-1.5) -- (1.2,-1.5);
			\node at (2,-1.5) {$S^3 \slash \mathcal{D}_{\{r_n\}}$};
			\draw[->] (2,-0.45) -- (2,-1.15);
			\node at (3.2,-0.6) {\footnotesize{$\pi$-approximate}};
			\node at (3.38,-0.9) {\footnotesize{homeomorphism}};
			\node[rotate=-90] at (0,-2.25) {$\cong$};
			\node at (0,-3) {$S^3$};
			\node[rotate=-90] at (2,-2.25) {$\cong$};
			\node at (2,-3) {$S^3$};
			\draw[->] (0.4,-3) -- (1.6,-3);
			\node at (1,-2.75) {$J$};
			\node at (-2.8,0) {\hspace{0.25em}};
		\end{tikzpicture}
	\end{figure}

	Using Bing's shrink (now with ramification) to construct the final isomorphism, moc$(J)$ will grow proportionately to $f$ at least at a divergence sequence of values for $\epsilon^{-1}$. Furthermore, the proof of the main theorem shows that no other shrink, one producing a conjugate $J^h$, can reduce the moc growth from Bing's shrink by more than a polylog in the exponent. Consequentially, all conjugates $J^h$ will have equivalent moc$(J^h)$, modulus of continuity growth functions.
\end{proof}

We turn now to some conjectures and questions.

\setcounter{conj}{0}
\begin{conj}[Gap Conjecture]
	Let $J: S^3 \ra S^3$ be any orientation reversing involution on the 3-sphere. If $\operatorname{moc}(J) \overset{\infty}{\leq} \exp$, then $J$ is conjugate to the standard orientation-reversing involution.
\end{conj}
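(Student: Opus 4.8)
\emph{Towards the Gap Conjecture.} This conjecture is open; the following is the strategy we would pursue, together with the point at which it currently stalls. The plan is to reduce the statement to a single structural assertion about the fixed set and then to run the $(a,d)$-stretching machinery of Section~2 in reverse.

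\emph{Step 1 (reduction to flatness of the fixed set).} By Smith theory the fixed set $\Sigma \coloneqq \operatorname{FIX}(J)$ of an orientation reversing involution of $S^3$ is a $2$-sphere (as it is for the Bing involution and all its conjugates), and $J$ interchanges the two closed complementary regions $B_\pm$. If $\Sigma$ is \emph{flat}, i.e.\ tamely embedded, then the topological Schoenflies theorem makes each $B_\pm$ a $3$-ball; picking a homeomorphism $\phi\colon S^3 \ra S^3$ with $\phi(\Sigma) = S^2_{\mathrm{std}}$ and $\phi(B_+)$ the upper hemiball, a short hemisphere-by-hemisphere argument (comparing $\phi J \phi^{-1}$ with the standard reflection $\rho(x_1,x_2,x_3,x_4) = (x_1,x_2,x_3,-x_4)$, both of which fix $S^2_{\mathrm{std}}$ and swap hemiballs) produces a further homeomorphism conjugating $\phi J \phi^{-1}$ to $\rho$. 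Hence $J$ is conjugate to $\rho$, and the Gap Conjecture is \emph{equivalent} to the implication $\operatorname{moc}(J)\overset{\infty}{\leq}\exp \Longrightarrow \Sigma \text{ is flat}$.

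\emph{Step 2 (contrapositive via local wildness and a reversed stretching argument).} Suppose $\Sigma$ is not flat. By the standard characterization of tameness for codimension-one spheres, $\Sigma$ fails to be $1$-LCC at some point $p$: there are radii $r_n \searrow 0$ and loops $\gamma_n \subset B(p,r_n)\setminus\Sigma$ that are not null-homotopic in $B(p,2r_n)\setminus\Sigma$, so arbitrarily close to $p$ the complement is ``clasped.'' One would extract from this a local defining sequence $N_0 \supset N_1 \supset \cdots$ of codimension-zero handlebody neighborhoods near $p$ in which the nontrivial linking persists to every stage, and then rerun the Bag Lemma~\ref{lm:bag}, Corollary~\ref{cor:baglemma}, the Long Arc Lemma~\ref{lm:longarc}, the J-Lemma~\ref{lm:Jlemma}, and the trichotomy of Proposition~\ref{prop:final_4} on this local sequence exactly as in Section~2, with $N_k$ in the role of the stage-$k$ Bing tori. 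Concretely, for each $n$ one wants a planar disk $\Delta_n$ with $\de\Delta_n \subset \Sigma$, $\operatorname{area}(\Delta_n) \approx r_n^2$, and a point $y_n \in \operatorname{int}\Delta_n$ with $\operatorname{dist}(y_n,J(y_n)) \geq d_n$, where $d_n^{-1}$ grows no faster than a polylog of $r_n^{-1}$; the resulting $(a_n,d_n)$-stretching then forces $\delta^{-1}(\epsilon^{-1}) \overset{\infty}{\geq} \exp$, a contradiction. As in the main proof, Lemma~\ref{lm:moc-limit} lets us work with smooth approximants throughout.

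\emph{The main obstacle (a uniform stage-count bound).} Everything rests on two quantitative facts that are automatic for a Bing tree but not for a general wild sphere: (i) reaching cross-sectional diameter $r$ near $p$ forces $\gtrsim \log(1/r)$ clasping forks, and (ii) each fork at best halves area while displacing turning points by only boundedly many rdwh---precisely what Lemma~\ref{lm:el_count}, non-splitting of the Borromean complement, and linking-number parity supply in the Bing case. For an arbitrary defining sequence the handlebodies may have high genus and intricate clasping, and, most seriously, nothing evident forbids a wild point whose neighborhoods shrink to diameter $r$ in only $O(\operatorname{polylog}(1/r))$ stages while still destroying $1$-LCC; the Harrison-type wrinkling recalled in the Introduction (raising the Hausdorff dimension of a surface to improve the regularity of an associated flow) is a concrete warning that such ``accelerations'' really occur in closely related settings. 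So the crux is a purely combinatorial-geometric lemma, uniform over \emph{all} wild defining sequences, asserting that $1$-LCC failure forces an essentially exponential number of slow area-halving stages---equivalently, that no Harrison-like change of coordinates can push the modulus of continuity of the Bing involution below exponential. Proving this seems to need a genuinely new idea, which is why the statement is offered only as a conjecture; a positive answer would in particular upgrade Theorem~\ref{paper-thm} to a fully exponential lower bound on the imoc, removing both the polylog and the square root from the exponent.
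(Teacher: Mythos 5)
This statement is one of the paper's conjectures; the paper offers no proof of it (the introduction only records that a positive answer would strengthen Theorem~\ref{paper-thm}), so there is nothing of ours to compare your argument against, and you are right not to claim a proof. Your outline is a sensible reading of how the conjecture relates to the machinery of Section 2, but Step 1 contains a genuine slip: Smith theory does not force $\operatorname{FIX}(J)$ to be a $2$-sphere. For an orientation-reversing involution of $S^3$ the fixed set is a mod-$2$ homology sphere of dimension $0$ or $2$, and the orthogonal involution $(x_1,x_2,x_3,x_4)\mapsto(-x_1,-x_2,-x_3,x_4)$ realizes the two-point case; it is an isometry, hence has trivial moc, and is not conjugate to the reflection $\rho$. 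So your claimed equivalence with ``$\operatorname{moc}(J)\overset{\infty}{\leq}\exp \implies \Sigma$ flat'' only holds after either restricting to the case of a $2$-dimensional fixed set or reading the conclusion as ``conjugate to an orthogonal (equivalently, smooth) involution,'' which is what the introduction's version of the Gap Conjecture asserts. With that caveat, the flat case of Step 1 is fine: Schoenflies plus the reflection trick $\phi|_{B_-}\coloneqq\rho\circ\phi\circ J$ does conjugate $J$ to $\rho$.

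On Step 2, your diagnosis of the obstruction matches why this is a conjecture and not a scholium. Everything in Section 2 that converts clasping into quantitative $(a,d)$-stretching is tied to the specific combinatorics of the Bing tree: disjoint solid-torus stages forming Bing pairs, the non-split Borromean complement driving Lemma~\ref{lm:el_count} and hence the exponential supply of final $n$'s in Lemma~\ref{lm:final_k}, the primordial $E_\sigma$ disks that feed the J-lemma~\ref{lm:Jlemma}, and the unnested rdwh behind the Key Fact area count. None of this is available for an arbitrary defining sequence of high-genus handlebodies at a non-$1$-LCC point, and no uniform ``exponentially many slow, area-halving stages'' lemma is known; that missing uniform bound is precisely the new idea the conjecture calls for.
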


While we can build examples to force the modulus of continuity to be very large, the conjecture says that there is a gap or desert: that is, no examples exist between isometry (no stretching) and exponential stretching in $S^3$. In the introduction we produced examples of wild Lipschitz involutions on $S^n$, $n \geq 4$, so there is no such gap in higher dimensions.

The next conjectures primarily serve to show how little we know.

\begin{conj}[High-dimensional Bing Conjecture]
	The theorem and scholium hold if we replace $S^3$ by $S^n$, $n \geq 4$, and replace the Bing involution by its spun counterparts.
\end{conj}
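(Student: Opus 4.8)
\emph{Proof proposal.} The plan is to exhibit the dimension-$3$ argument as ``product-stable'' and then run it on the spin. First fix the spun Bing involution $I^{sp}$ of $S^n$: remove from $(S^3, S^2, \mathcal{D})$ a standard ball pair disjoint from the nondegenerate elements of $\mathcal{D}$ to get $(B^3, B^2, \mathcal{D}|_{B^3})$, form $S^n = \partial(B^3 \times D^{n-2})$, extend $\mathcal{D}|_{B^3}$ along the $S^{n-3}$-direction so that each nondegenerate element becomes (cell-like set) $\times S^{n-3}$, obtaining a cell-like, $\tld{I}^{sp}$-invariant decomposition $\mathcal{D}^{sp}$, and let $I^{sp}$ be the induced involution on $S^n/\mathcal{D}^{sp}$. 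That $\mathcal{D}^{sp}$ shrinks is inherited by crossing a shrink of $\mathcal{D}$ with $\mathrm{id}_{S^{n-3}}$ together with the standard fact that the product of a shrinkable decomposition with a manifold shrinks; that $\mathrm{FIX}(I^{sp}) \cong S^{n-1}$ is not topologically standardly embedded follows from the dimension-$3$ calculation, since the fundamental group of its complement still contains the infinite free product-with-amalgamation obstructing tameness in $S^3$ (for $n \geq 5$ the extra $S^{n-3}$-factor is simply connected and changes nothing; for $n=4$ one projects out the $\Z$-factor). Hence $I^{sp}$ and all of its conjugates are exotic. Note this is genuinely different from the Mazur-manifold examples of the Introduction, whose fixed set fails to be an ANR: here the fixed set is a sphere, but wildly embedded in a ``Bing way,'' and it is the Bing-tree combinatorics that must be mined.

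Next, transport the proof of Theorem \ref{paper-thm} essentially verbatim. Take the \emph{spun Bing solid tori} to be $\hat{T}_\sigma := T_\sigma \times S^{n-3}$, the \emph{spun planes} $\hat{P}_k := P_k \times S^{n-3}$ for $P_k$ the flat meridional disks of the root torus, and a \emph{spun (residual) disk-with-holes} to be (an rdwh of $P_k \cap T_\sigma$) $\times S^{n-3}$, with $\operatorname{area}^+$ declared equal to the planar $\operatorname{area}^+$ of its $3$-dimensional slice; the isoperimetric estimate is applied only in the $D^2$-cross-sectional direction, so a point of a spun $\Delta^+$ is within $\sqrt{a/\pi}$ (hence, up to a fixed bi-Lipschitz constant coming from the metric on $S^n$, within that distance in $S^n$) of $\partial$, which lies on $\mathrm{FIX}$. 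With these conventions the combinatorial skeleton is formally unchanged: Lemma \ref{lm:el_count}, Lemma \ref{lm:final_k}, the Key Fact that residual dwh on a plane are unnested and so divvy up that plane's $D^2$-area (now via Fubini in the $D^2 \times S^{n-3}$ directions), the trichotomy, and the polylog-spaced endgame all refer only to the Bing-doubling pattern of tori and the cyclic order of rdwh. The final area/volume count ($\approx 2^{n/2}$ disjoint final $n$'s force one with cross-sectional $\operatorname{area}^+ \lesssim n\,2^{-n/2}$, again by Fubini across the extra directions) and the endgame then yield $\delta^{-1}(\epsilon^{-1}) \geq e^{\epsilon^{-1}/L(\epsilon^{-1})}$ for $I^{sp}$ and every conjugate; the scholium follows by ramifying $\mathcal{D}^{sp}$ exactly as $\mathcal{D}$ was ramified.

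The analytic core must be argued intrinsically, since the conjugating homeomorphism $h$ of $S^n$ need not respect any product structure. But this is the same situation as in dimension $3$ (where the ``planes'' also become wild under a conjugacy), and the same remedy applies: pass to smooth approximates $K_i = I_i^{h_i}$ (Lemma \ref{lm:moc-limit} spins verbatim), near whose fixed $S^{n-1}$ the involution \emph{is} a reflection $(-x_1, x_2, \dots, x_n)$ by the smooth slice theorem; then the Bag Lemma's homological trapping argument needs only that $\mathrm{FIX}$ is a codimension-one sphere (so it separates, by Alexander duality), that the spun Bing tori carry the Bing-double linking pattern, and that the Borromean complement is still nonsplit after crossing with $S^{n-3}$ for $n \geq 4$ (detected by projecting $\pi_1$ onto the $S^3$-factor's link group) --- none of which sees the splitting $S^3 \times S^{n-3}$. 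The Long Arc and J-Lemmas are pure linking-number arguments (a curve $\alpha$ closed up inside a dwh, linking $\beta = \operatorname{longitude}$), and linking numbers in $S^n$ are intrinsic; the ``primordial disk'' $E$ bounding $\beta$ and the innermost-sphere cut-offs on $\Delta^+$ persist, with circles of intersection replaced by an $S^{n-3}$-worth of circles, the cut-and-paste still controlled by the Bing-pair structure.

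The main obstacle --- and the reason this is only a conjecture --- is the disk-with-holes bookkeeping in codimension one. In $S^3$ a dwh is a planar surface whose holes bound sub-disks of the torus boundary, and this is what makes the canonical isotopy $\mathcal{I}$, hence the residual notion and the ``$\sum \operatorname{area}^+ < 1$'' Key Fact, work. In $S^n$ the intersection $h^{-1}(\hat{T}_\sigma) \cap \hat{P}_k$ is genuinely $(n-1)$-dimensional, and although in the \emph{product} picture it is (planar surface) $\times S^{n-3}$, after an arbitrary $h$ one must reprove that the relevant components admit a residual tidying whose $D^2$-shadows stay pairwise disjoint --- equivalently, that $h$ cannot smear the $S^{n-3}$-direction so violently that residual pieces begin to overlap in their planar projections. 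Controlling those shadows is the one step that does not come for free from the dimension-$3$ argument; resolving it, whether by a new topological input, a restriction on $h$, or a cleverer choice of the family of spun planes, is where the work should be concentrated.
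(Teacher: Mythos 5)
You are attempting to prove a statement that the paper itself leaves as a conjecture: there is no proof of the High-dimensional Bing Conjecture in the text, only the remark that spun (possibly ramified) Bing decompositions of $S^n$ shrink \cite{edwards06} and that ``the innermost circle arguments in the paper will need to be replaced with more homological alternatives to extend the proof.'' So your proposal cannot be checked against an existing argument; it can only be judged as a program. As a program it is sensible, and to your credit the obstacle you single out at the end --- that after an arbitrary conjugating homeomorphism $h$ there is no product structure, so the residual tidying of disks-with-holes and the ``$\sum \operatorname{area}^+ < 1$'' Key Fact must be re-founded in codimension one --- is essentially the same difficulty the authors flag. But you should not present the intermediate steps as ``spinning verbatim.'' The J-lemma and the construction of $E$ from $E_0$, the canonical isotopy $\mathcal{I}$, and the rdwh bookkeeping are all innermost-circle/cut-and-paste arguments for surfaces in a $3$-manifold; once $h$ smears the $S^{n-3}$ direction, the intersections of the (images of the) spun tori with your spun planes are $(n-1)$-dimensional objects for which ``innermost circle'' has no meaning, and your appeal to ``an $S^{n-3}$-worth of circles, the cut-and-paste still controlled by the Bing-pair structure'' silently reinstates the product picture you have just conceded is unavailable. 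The same issue infects the isoperimetric step: declaring $\operatorname{area}^+$ to be the area of a $3$-dimensional slice presupposes a slicing that a general conjugate does not respect, so the passage from small $\operatorname{area}^+$ to a nearby fixed point needs an intrinsic $(n-1)$-dimensional substitute (and the exponent in the isoperimetric inequality changes with dimension, which at minimum alters the constants in the claimed bound).

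In short: the parts of your write-up that are genuinely dimension-independent (the Bing-tree combinatorics, Lemmas \ref{lm:el_count} and \ref{lm:final_k}, the volume-pigeonhole over exponentially many final $n$'s, the polylog endgame scheduling) are correctly identified, but the analytic/topological core --- bag lemma trapping, J-lemma linking with homological rather than innermost-circle cut-offs, and a residual-area statement valid in the conjugated (non-product) picture --- is exactly what remains open, and your proposal does not supply it. It is a reasonable plan of attack, not a proof, and it should be framed as such.
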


It is known \cite{edwards06} that spinning the (possibly ramified) Bing decomposition of $S^3$ results in similar, still shrinkable, decompositions of $S^n$, $n \geq 4$. These lead to analogous families of wild involutions on $S^n$. It seems reasonable to hope that the methods of this paper can be generalized to give a similar analysis of the moc for topological conjugates of these involutions, and verify that they too have at least weakly exponential imoc. Clearly, the innermost circle arguments in the paper will need to be replaced with more homological alternatives to extend the proof.

As far as we have been able to determine, moc over a topological conjugacy class, \emph{inherent modulus of continuity}, has not previously been studied, although it is the low regularity cousin of inherent differentiability \cites{harrison75,harrison79,fh88}. In that study, the foundational examples\footnote{There are examples in all dimensions $\geq 2$ of $C^r$-diffeomorphisms, any $r>0$ that cannot be increased to $C^s$ for any $s>r$ by a topological change of coordinates.} exhibit considerable dynamic \emph{complexity}. Involutions, on the other hand, represent the extreme of dynamical simplicity. In an appendix we describe dynamically complex homeomorphisms with arbitrarily large imoc on all manifolds of dimension $\geq 2$.

We return now to the case of 3-dimensional symmetries. The fact that the fixed set of $I$ has codimension 1 ($I$ reverses orientation) is crucial to our proof of large imoc, but we do not know if this feature is essential to the phenomenon, thus leading to the following conjecture.

\begin{conj}[Finite group actions]
	If a finite group $F$ acts on a 3-manifold $M$ with subexponential moc (for each element of $F$), then $F$ is topologically conjugate to a smooth action.
\end{conj}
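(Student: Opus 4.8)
The plan is to prove the contrapositive: assuming the $F$-action on $M$ is \emph{not} topologically conjugate to a smooth action, I would exhibit a group element $g \in F$ whose modulus of continuity fails to be subexponential, $\delta^{-1}_{g}(\epsilon^{-1}) \overset{\infty}{\geq} e^{\epsilon^{-1}/L(\epsilon^{-1})}$ for the polylogs $L$ of Theorem \ref{paper-thm}. The one nontrivial input from $3$-manifold topology is a \emph{localization} statement: if every $\operatorname{Fix}(g)$, $g \neq e$, is tamely embedded and the action is locally linear along the singular set $\Sigma = \bigcup_{g \neq e}\operatorname{Fix}(g)$, then the action is smoothable---this should follow from equivariant smoothing in dimension $3$ (Moise's theorem together with equivariant triangulation, and, if one wants isometric models as well, the Geometrization and Orbifold theorems building on \cites{perelman1,perelman2,perelman3}). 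Granting this, non-smoothability forces some $\operatorname{Fix}(g)$ to be \emph{wildly} embedded, the wildness being detected by $\pi_1$ of the complement, exactly as with Bing's fixed $2$-sphere.

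The next step is to stratify $\Sigma$. Isolated fixed points are harmless: the local model is a finite group acting on the link $S^2$, and dimension $2$ is too low for wildness (as noted in the introduction, such actions are conjugate to smooth ones). So the obstruction lives on the codimension-$1$ stratum---fixed surfaces of orientation-reversing reflections---or the codimension-$2$ stratum---fixed arcs and circles of rotations. For a wild fixed surface $F$ of a reflection $g$, I would rerun the proof of Theorem \ref{paper-thm} with minimal changes: the Bag Lemma (Lemma \ref{lm:bag}) and its corollary are purely local facts about smooth orientation-reversing involutions of $\mathbb{R}^3$ and apply verbatim, and what must be replaced is only the combinatorial skeleton. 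Wildness of $F$ means $F = \bigcap_i C_i$ for a nested sequence of handlebodies with $\pi_1(C_i \smallsetminus C_{i+1})$ nontrivial; the loop-theorem argument underlying Lemma \ref{lm:el_count}---which used only that a Bing pair sits nonsplittably in its mother---should generalize to show that descending one stage of the defining sequence multiplies the number of essential (residual) cross-sectional pieces. The area-partition Key Fact then makes $\operatorname{area}^+$ small, the J-lemma (Lemma \ref{lm:Jlemma}) supplies the displacement $d$ via turning-point analysis, and the endgame runs as before.

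The main obstacle---and why this remains conjectural---is twofold. In codimension $1$, a general wild surface need \emph{not} have an exponentially branching defining sequence: it can be wild through linear growth of genus with boundedly many handlebodies per stage (Fox--Artin-style), in which case the area-partition bound yields only a polynomially small $a$, hence only a super-polynomial, not (sub)exponential, lower bound on the moc. Upgrading this would require showing that any non-smoothable embedding in a $3$-manifold forces exponential growth of cross-sectional topological complexity---a statement I do not know how to prove and which, if false, would refute the conjecture. In codimension $2$ the difficulty is more basic: for a rotation $g$ of order $k$ about a wild circle $C$, the pointwise displacement $\operatorname{dist}(x,g(x)) \approx \theta\cdot\operatorname{dist}(x,C)$ is first-order small near $C$, so there is no immediate analogue of the Bag Lemma; one would have to detect the wildness of $C$ through a second-order, angular form of $(a,d)$-stretching (of the kind sketched in the quasi-conformal discussion) or by passing to the quotient orbifold and analyzing how its branched-cover structure must be shrunk. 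I would therefore expect the first provable increment to be: the conjecture holds for locally linear actions all of whose wild fixed sets are codimension $1$ and have exponentially branching defining sequences, with the codimension-$2$ case and the slow-branching case isolated as the remaining hard problems.
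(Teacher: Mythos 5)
You are attempting to prove a statement that the paper itself records only as a conjecture (``Finite group actions''); the authors explicitly say these conjectures ``primarily serve to show how little we know,'' and they offer no proof. So there is no argument of theirs to compare against, and your proposal --- as you candidly say --- is a program, not a proof: it does not establish the statement, and the two places where it stalls are genuine gaps, not technicalities.

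Concretely: (i) in the codimension-one stratum, your worry that a wild fixed surface could have a slowly branching (Fox--Artin-type) defining sequence, so that the area-partition argument yields only superpolynomial but subexponential distortion, is exactly the content of the paper's Gap Conjecture --- that there is a desert between no stretching and exponential stretching --- which is itself open; a contrapositive proof along your lines would have to prove something at least that strong, so the circle does not close. Moreover, ``rerun Theorem \ref{paper-thm} with minimal changes'' understates what is used: Lemma \ref{lm:el_count} relies not merely on nonsplitness but on the specific Bing-pair/Borromean structure to double residual length at every fork, Lemma \ref{lm:final_k} needs that doubling to manufacture exponentially many final $n$'s (which is where the exponentially small $\operatorname{area}^+$ comes from), and the J-lemma and endgame need the sister torus and its spanning disk $E$ of Figure \ref{fig:finite_approx}(c); a general wild defining sequence has no analogue of any of these. (ii) In the codimension-two stratum your diagnosis matches the paper's own: the discussion of $I_\pi$ and the bent-axis exercise (Figure \ref{fig:I_pi}), where the optimal conjugate is only about $\delta^{-1/2}$-biLipschitz, shows displacement near a fixed circle is second-order and that one would need a family-of-one-dimensional-bags (angular) substitute for Lemma \ref{lm:bag}, which nobody has. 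Finally, the localization step (tame fixed sets plus local linearity implies smoothability, via equivariant Moise/orbifold geometrization) is plausible but is itself a nontrivial input you would need to source or prove. In short, your outline is a reasonable map of the territory and correctly isolates the hard points, but both of the essential steps --- ruling out subexponential wildness in codimension one and detecting wildness of rotation axes metrically --- remain unproved, which is why the statement is a conjecture.
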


The simplest example of this conjecture is the ``other Bing involution,'' $I_\pi$, apparently first described by Montgomery and Zippin \cite{alford66}. This involution comes from a $180^\circ$-rotational symmetry of the Bing decomposition $\mathcal{D}$ (which happens to commute with the reflection we have studied), and has a wild circle as a fixed point set. Again, although $\mathcal{D}$ enjoys this rotational symmetry, the symmetry is not preserved by any shrinking homeomorphisms, so the involution becomes wild in the quotient.

If one wished to adapt the methods of this paper to study the imoc of $I_\pi$, a good warmup is to ask how much metric distortion is required to rotate $\R^3$ by an involution $I^\pr$ fixing a \emph{bent} $z$-axis with acute angle $\delta$ as indicated in Figure \ref{fig:I_pi}.

\begin{figure}[ht]
    \centering
    \begin{tikzpicture}[scale=1.6]
        \draw (0,0.5) to[out=225,in=135] (0,-0.25);
        \draw[dashed] (0,0.5) arc (150:-150:2 and 0.75);
        \draw (0,0.5) -- (3,0.1) -- (0,-0.25);
        \draw[->] (0,-0.25) -- (0,-1.7);
        \draw (0,0.5) -- (0,1.4);
        \draw[->] (-0.25,1.7) arc (-180:0:0.25 and 0.2);
        \draw[->] (0,1.6) -- (0,2.1);
        
        \node at (4.1,0.1) {$\beta$};
        \node at (0.9,0.1) {$\delta$};
        \node at (-0.4,0.1) {$\alpha$};
        \node at (0.5,1.7) {$\pi$};
    \end{tikzpicture}
    \caption{$\pi$-rotation $I^\pr$ about $z$-axis with an angle $\delta$ crease.}
    \label{fig:I_pi}
\end{figure}
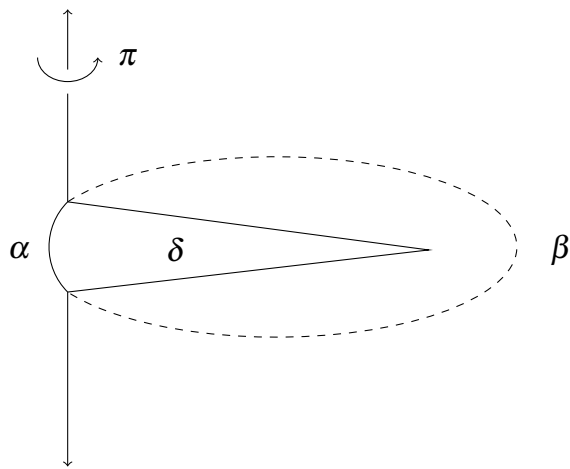

The answer to this exercise is that the topological conjugacy class of $I^\pr$, fixing this bent $z$-axis, contains involutions that are approximately $\delta^{-1/2}$-biLipschitz but not better.

To do this exercise, we assume $\delta$ is small and approximate $2 \tan \frac{\delta}{2} \approx \delta$. One's first thought is that (scaling the ``beak'' to have length one) an arc $\alpha$ of length $\delta$ on the left must be thrown by $I^\pr$ to the right (dotted in Figure \ref{fig:I_pi}) to an arc $\beta \coloneqq I^\pr(\alpha)$ of length $> 2$, implying a Lipschitz constant $O(\delta^{-1})$.

But one can do a bit better. There is implicitly an $S^1$ family of arcs $\alpha_\omega$, $\omega = e^{2 \pi i \te}$, together forming a 2-sphere, all $\alpha_\omega$ having one end point at its north and the other endpoint at its south pole, and so that $I^\pr(\alpha_\omega) = \alpha_{-\omega}$. It is \emph{not} necessary that if $\alpha = \alpha_\omega$ then $\beta = \alpha_{-\omega}$. We can reduce stretching by avoiding this case. Let $f(\te) = \log(\operatorname{length} \alpha_\omega)$ be a function on $[0,1]$ with a single local maximum and single local minimum. Indeed, $f(\te)$ must vary between $\log \delta$ and $\log 2$, let us choose $f$ to (approximately) minimize
\begin{equation}
    \abs{f(\te) - f(\te + \frac{1}{2})}
\end{equation}
over $\te \in [0,1]$, and hence (approximately) minimize stretching

\begin{figure}[ht]
    \centering
    \begin{tikzpicture}[scale=1.6]
        \draw[->] (0,0) -- (0,3.5);
        \draw[->] (0,0) -- (6,0);
        \draw (0,0) -- (0.5,3) -- (5.5,0);
        \draw (0,1.5) -- (2.5,0) -- (3,3) -- (5.5,1.5);
        \draw (5.5,0) -- (5.5,3);
        
        \node at (-0.6,0) {$\log \delta$};
        \node at (-0.6,3) {$\log 2$};
        \node at (0,-0.3) {0};
        \node at (2.5,-0.3) {$\te \ra$};
        \node at (5.5,-0.3) {1};
        \node at (1.8,3.7) {graph $f(\te)$};
        \draw[->] (1.8,3.4) -- (1.5,2.6);
        \node at (4.5,3.7) {graph $f(\te + \frac{1}{2})$};
        \draw[->] (4.5,3.4) -- (4.2,2.6);
    \end{tikzpicture}
    \caption{}\label{fig:min_stretch}
\end{figure}

Figure \ref{fig:min_stretch} suggests (and it is so) that $\max_\te \abs{f(\te) - f(\te+\frac{1}{2})}$ can be made arbitrarily close to $\frac{\abs{\log 2 - \log \delta}}{2}$, but not smaller when $f$ has unique local max and local min. This construction leads to the $\approx \delta^{-1/2}$-biLipschitz estimate for $I^\pr$.

This observation may be thought of as the codimension two analog of our bag lemma (\ref{lm:bag}), where instead of a single 2-dimensional bag, we are led to a $\te$-family, $\te \in [0, \frac{1}{2}]$, of 1-dimensional bags, with a stretching lower bound $\approx \delta^{-\frac{1}{2}}$ holding for some $\te$. We hope it may serve as a starting point for a researcher interested in bounding the imoc of $I_\pi$, and more generally tackling the above conjecture.

\subsection*{Questions}
There is a fixed point free involution of $S^4$ with quotient $\ast\mathrm{RP}^4$, a manifold with nontrivial Kirby-Siebenmann invariant. What is its inherent modulus of continuity? What about involutions on smooth manifolds which are exotic due to more subtle Donaldson/Seiberg-Witten invariants?

This paper plays on the boundary between wild topology and analysis. In dimension 3, the Hauptvermutung holds [Mo54]; that is, all topological manifolds are smoothable. So the first questions involving distinctions between the topological and smooth categories in dimension 3 involve symmetries. This paper demonstrates this analytical difference in the simplest symmetry possible, an involution.  The most compelling venue for the analytical implications of infinite construction and shrinking is four dimensions. There what is known about the boundary between topological and smooth categories comes from a study of the minimal analytic context required to formulate the anti-self-dual Yang-Mills equations \cite{ds89}. If a 4-manifold has a quasi-conformal (QC) structure, most, or perhaps all, of Donaldson theory applies. In particular, a Casson handle cannot be biLipschitz equivalent, or even QC-equivalent, to a standard open handle. However, there is no internal argument for this fact within geometric topology. But perhaps lower bounds on modulus of continuity, or even lower bounds on conformal distortion, of homeomorphisms between smooth manifolds, are not wholly inaccessible by geometric methods.

The authors would like to thank Michael Wang for manuscript preparation, particularly creation of the illustrations.

\bibliography{references}

\section*{Appendix: imoc for infinite order maps}
Given any growth function $g:[1,\infty) \ra [1,\infty)$, it is possible to construct an infinite order map $F: D^n \times S^1 \ra D^n \times S^1$ for each $n \geq 1$ so that imoc$(F) \overset{\infty}{\geq} g$. The function $F(x,t) = (x, t + e^{2 \pi i f(x)})$ for some function $f: D^n \ra [0,1]$ which we now describe.

$D^n$ is the unit $n$-disk. Let $\{a_i\}$ be a rapidly growing sequence of positive integers; how rapidly will depend on the choice of $g$. We next describe an embedding in $D^n$ of infinitely many small, disjoint, round $n$-disks. The embedding is made in infinitely many tranches of $n$-disks. The first tranche (which could be a single disk) covers $\frac{1}{2} \operatorname{vol}(D^n)$. Inductively, the $i+1$th tranche of disks is placed in the complement of the first $i$ tranches, has total volume $\frac{1}{2^{i+1}}$, and the property that the largest radius among $i$th-tranche disks must be smaller than $\frac{1}{a_i}$. So, these disks shrink rapidly in size. Let $G$ be the complement of the interiors of all the embedded disks. $G$ has zero $n$-dimensional Lebesque measure and might look something like a Sierpi\'{n}ski gasket, when $n = 2$. $G$ will be a Cantor set when $n = 1$. We now define $f$ such that $f\big\vert_G \equiv 0$, and on each $i$th-tranche disk $f(\ast) = \frac{1}{i}$, for $\ast$ the center point of that disk. Before changing coordinates, we see points $x \times 1$ and $\ast \times 1$ with $x \times 1 \in G \times S^1 \subset \operatorname{Fix}(F)$, $\operatorname{dist}(x \times 1, \ast \times 1) \leq \frac{1}{a_i}$, and $\operatorname{dist}(F(x \times 1), F(\ast \times 1)) = \frac{1}{i}$. Clearly, $\{a_i\}$ can be chosen so that moc$(F) \overset{\infty}{\geq} g$.

Now consider the effect of changing coordinates on $D^n \times S^1$, that is, replacing $F$ and $F^h$ for any homeomorphism $h: D^n \times S^1 \ra D^n \times S^1$. Let $S_i$ denote the union of the $i$th tranche of disks cross $S^1$, $\operatorname{vol}_{n+1}(S_i) = \frac{1}{2^i}$. Choosing the convention that $D^n$ has $n$-vol $= 1$ and $S^1$ has 1-vol = 1, then clearly $\operatorname{vol}_{n+1}(h(S_i)) < 1$. It follows that one of the $i$th-tranche solid tori $T_i$, i.e.\ some component of $S_i$ has its $n+1$-volume expanded by $h$ less than a factor of $2^i$. Thus, by volume comparison, every point in $h(T_i)$ must be within $2^{\frac{i}{n}} a_i$ of $\de(h T_i)$. On the other hand, by continuity, some point $\ast \times D_0$ must proceed exactly $\frac{1}{i}$th of the way around the $S^1$ coordinate when $F^h$ is applied. Let $z$ be a fixed point with dist$(z, \ast \times \te_0) \leq 2^{\frac{i}{n}} a_i$. Then dist$(F(z) = z, F(\ast \times \te_0)) \geq \frac{1}{i}$. Since we have complete control of $\{a_i\}$, this enables us to force, simultaneously for all $h$, an arbitrarily growing moc for $F^h$, that is an arbitrary imoc for $F$.

Finally, it is worth noting that the Cartesian product structure on $D^n \times S^1$ was merely a convenience. The dynamic $F: D^n \times S^1 \ra D^n \times S^1$ can be implanted by a codimension-0 embedding, $k: D^n \times S^1 \hookrightarrow M$, inside a chart of any manifold $M$ of dimension $\geq 2$ to obtain:

\begin{customthm}{A}
    Let $M^{n+1}$ be a Riemannian manifold (compact or noncompact) of dimension $\geq 2$, then for any growth function $g$, $M$ admits a self-homeomorphism $F_g: M \ra M$ with $\operatorname{imoc}(F) \overset{\infty}{\geq} g$.
\end{customthm}

\begin{proof}[Proof sketch]
    When $M$ is noncompact, e.g.\ $\R^{n+1}$, a new issue arises: a radical expansion $h$, of $M$, can reduce the moc of $F^h$. But as $\epsilon \ra 0$, the effect on the moc approaches composition with at most linear functions.
    
    Setting moc$(F) =: \delta_F^{-1}$ and moc$(F^h) =: \delta_{F^h}^{-1}$, let $c_1 =$ inf$($length of a degree 1 loop in $h \circ k(D^n \times S^1)$ and $c_{n+1} > \operatorname{vol}(h \circ k(D^n \times S^1)))$. Then for sufficiently large $\epsilon^{-1}$,
    \[
    \delta_{F^h}^{-1}(\epsilon^{-1}) > \mathrm{const.}\ 2^{-\frac{\epsilon^{-1}}{n}} c_{n+1}^{\frac{1}{n+1}} \delta_F^{-1}(c_1^{-1}\epsilon^{-1}) \tag{A1}
    \]
    for some const.$(n) > 0$, but this may be compensated by a still faster growing $\{a_i\}$.
\end{proof}

\begin{note}
    Theorem A does not hold for dim$(M) = 1$. Danny Calegari has shown the authors an elementary proof that every homeomorphism $F: S^1 \ra S^1$ is conjugate, at least to a biLipschitz homeomorphism. We do not know what is the maximal regularity that can be achieved for the general homeomorphism $F: S^1 \ra S^1$. It is certainly less than $C^2$ by \cite{denjoy32}.
\end{note}

\begin{note}
    Similar to our remark in the introduction, the $F$ constructed above are inherently non-quasiconformal. The reason is that the points $(\ast \times \te_0)$ located above do not just see one nearby fixed $z$, but another nearby $z^\pr$ in an opposite, or nearly opposite, direction. Thus $F$ bends a straight-angle or near a straight-angle into an order $\frac{1}{a_i}$-sharp point.
\end{note}

\end{document}